\DeclarePairedDelimiter\floor{\lfloor}{\rfloor}
\setlist[enumerate]{label=\rm{(\arabic*)}}
\setlist[enumerate,2]{label=\rm({\it\roman*})}
\setlist[itemize]{label=\raisebox{0.25ex}{\tiny$\bullet$}}
\newcommand{\GL}{\operatorname{GL}}
\newcommand{\aff}{\operatorname{Aff}}
\newcommand{\p}{\operatorname{\mathbb{P}}}
\newcommand{\C}{\operatorname{\mathbb{C}}}
\newcommand{\Z}{\operatorname{\mathbb{Z}}}
\newcommand{\N}{\operatorname{\mathbb{Z^+}}}
\newcommand{\G}{\operatorname{H}}
\newcommand{\s}{\mathcal{S}}
\newcommand{\A}{\operatorname{\mathbb{A}}}
\newcommand{\id}{\operatorname{id}}
\newcommand{\norm}{\operatorname{Norm}}
\newcommand{\aut}{\operatorname{Aut}}
\newcommand{\J}{\mathcal{J}}
\newcommand{\Bir}{\operatorname{Bir}}
\newcommand{\Cr}{\operatorname{Cr}}
\newcommand{\PGL}{\operatorname{PGL}}
\newcommand{\SL}{\operatorname{SL}}
\newcommand{\T}{{T}}
\newcommand{\SO}{\operatorname{SO}}
\newcommand{\W}{\operatorname{W}}
\newcommand{\exc}{\operatorname{Exc}}
\theoremstyle{plain}
\newtheorem{theorem}{Theorem}
\newtheorem{lemma}[theorem]{Lemma}
\newtheorem{proposition}[theorem]{Proposition}
\newtheorem{corollary}[theorem]{Corollary}
\theoremstyle{definition}
\newtheorem*{definition}{Definition}
\newtheorem{example}[theorem]{Example}
\theoremstyle{remark}
\newtheorem{remark}{Remark}
\numberwithin{theorem}{section}
\title{On homomorphisms between Cremona groups}
\author{Christian Urech}
\subjclass[2010]{14E07; 14L30; 32M05} 
\address{Mathematisches Institut\\ Universit\"at Basel\\ 4051 Basel\\ Switzerland}
\address{IRMAR\\ Universit\'e de Rennes 1\\ 35042 Rennes\\ France}
\email{christian.urech@unibas.ch}
\thanks{The author gratefully acknowledges support by the Swiss National Science Foundation Grant "Birational Geometry"  PP00P2 128422 /1 as well as by the Geldner-Stiftung, the FAG Basel, the Janggen P\"ohn-Stiftung and the State Secretariat for Education,
Research and Innovation of Switzerland}
\begin{document}
\maketitle
\begin{abstract}
We look at algebraic embeddings of the Cremona group in $n$ variables $\Cr_n(\C)$ to the groups of birational transformations $\Bir(M)$ of an algebraic variety $M$. First we study geometrical properties of an example of an embedding of $\Cr_2(\C)$ into $\Cr_5(\C)$ that is due to Gizatullin.  In a second part, we give a full classification of all algebraic embeddings of $\Cr_2(\C)$ into $\Bir(M)$, where $\dim(M)=3$ and generalize this result partially to algebraic embeddings of $\Cr_n(\C)$ into $\Bir(M)$, where $\dim(M)=n+1$, for arbitrary $n$. In particular, this yields a classification of all algebraic $\PGL_{n+1}(\C)$-actions on smooth projective varieties of dimension $n+1$ that can be extended to rational actions of $\Cr_{n}(\C)$.
\end{abstract}
\tableofcontents

\section{Introduction and statement of the results}

\subsection{Cremona groups} Let $M$ be a complex variety and $\Bir(M)$ the group of birational transformations of $M$. Denote by $\p^n=\p^n_{\C}$ the complex projective space of dimension $n$. The group 
\[
\Cr_n:=\Bir(\p^n)
\] 
is called the {\it Cremona group}. In this paper we are interested in group homomorphisms from $\Cr_n$ to $\Bir(M)$. In particular, we will study an embedding of $\Cr_2$ into $\Cr_5$ that was described by Gizatullin \cite{MR1714823} and consider the case, where $\dim(M)=n+1$.

A birational transformation $A\colon M\dashrightarrow N$  between varieties $M$ and $N$ induces an isomorphism $\Bir(M)\to\Bir(N)$ by conjugating elements of $\Bir(M)$ with $A$. Two homomorphisms $\Phi\colon\Bir(M)\to\Bir(N_1)$ and $\Psi\colon\Bir(M)\to\Bir(N_2)$ are called {\it conjugate} if there exists a birational transformation $A\colon N_1\dashrightarrow N_2$ such that $\Psi(g)=A\circ \Phi(g)\circ A^{-1}$ for all $g\in\Bir(M)$.

\begin{example}\label{product}
Assume that a variety $M$ is birationally equivalent to $\p^n\times N$ for some variety $N$. The standard action on the first factor yields an injective homomorphism of $\Cr_n$\ into $\Bir(\p^n\times N)$ and therefore also into $\Bir(M)$. We call embeddings of this type {\it standard embeddings}.
In particular, we obtain in that way for all nonnegative integers $m$ an injective homomorphism $\Cr_n\to\Bir(\p^n\times\p^m)$.
\end{example}

\begin{example}\label{stablyrational}
 A variety $M$ is called {\it stably rational} if there exists a $n$ such that $M\times\p^n$ is rational. There exist varieties of dimension larger than or equal to $3$ that are stably rational but not rational (see \cite{MR786350}).
We will see that two standard embeddings $f_1\colon\Cr_n\to\Bir(\p^n\times N)$ and $f_2\colon\Cr_n\to\Bir(\p^n\times M)$ are conjugate if and only if $N$ and $M$ are birationally equivalent  (Lemma \ref{birstab}). So every class of birationally equivalent stably rational varieties of dimension $k$ defines a different conjugacy class of injective homomorphisms $\Cr_n\to\Bir(\p^m)$ for $m=n+k$.
\end{example}

\subsection{Notation and subgroups of $\Cr_n$}
 If we fix homogeneous coordinates $[x_0:\dots:x_n]$ of $\p^n$, every element $f\in\Cr_n$ can be described by homogeneous polynomials of the same degree $f_0,\dots, f_n\in\C[x_0,\dots,x_n]$ without non-constant common factor, such that
\[
f([x_0:\dots:x_n])=[f_0:\dots:f_n].
\]
The {\it degree} of $f$ is the degree of the $f_i$.

With respect to affine coordinates $[1:X_1:\dots:X_n]=(X_1,\dots, X_n)$, we have 
\[
f(X_1,\dots, X_n)=(F_1,\dots, F_n),
\]
where $F_i(X_1,\dots,X_n)=f_i(1,X_1,\dots,X_n)/f_0(1,X_1,\dots,X_n)\in\C(X_1,\dots, X_n)$.

An important subgroup of $\Cr_n$ is the automorphism group 
\[
\aut(\p^n)\simeq\PGL_{n+1}(\C).
\]
The $n$-dimensional subgroup of $\aut(\p^n)$ consisting of diagonal automorphisms will be denoted by ~$D_n$.

Let $A=(a_{ij})\in M_n(\Z)$ be a matrix of integers. The matrix $A$ determines a rational self map of the affine space 
\[
f_A=(x_1^{a_{11}}x_2^{a_{12}}\cdots x_n^{a_{1n}}, x_1^{a_{21}}x_2^{a_{22}}\cdots x_n^{a_{2n}},\dots, x_1^{a_{n1}}x_2^{a_{n2}}\cdots x_n^{a_{nn}}).
\] 
We have $f_A\circ f_B=f_{AB}$ for $A,B\in M_n(\Z)$. One observes that $f_A$ is a birational transformation if and only if $A\in\GL_n(\Z)$. This yields an injective homomorphism $\GL_n(\Z)\to\Cr_{n}$ whose image we call the {\it Weyl group} and denote it by $\W_n$. This terminology is justified by the fact that the normalizer of $D_n$ in $\Cr_n$ is the semidirect product $\norm_{\Cr_n}(D_n)=D_n\rtimes\W_n$.
Note that $D_n\rtimes\W_n$ is the automorphism group of $(\C^*)^n$. Sometimes, $\W_n$ is also called the {\it group of  monomial transformations}.

The Cremona group $\Cr_n$ contains $\aut(\A^n)$, the group of polynomial automorphisms of the affine space $\A^n$. We always consider the embedding of $\aut(\A^n)$ into $\Cr_n$ by considering the affine coordinates given by $x_0\neq 0$.

\subsection{Previous results}

The well known theorem of Noether and Castelnuovo (see for example \cite{MR1874328}) states that over an algebraically closed field $k$ the Cremona group in two variables is generated by $\PGL_3(k)$ and the standard quadratic involution
\[
\sigma:=[x_1x_2:x_0x_2:x_0x_1]\in\W_2.
\]

Results of Hudson and Pan (\cite{hudson1927}, \cite{MR1686984}) show that for $n\geq 3$ the Cremona group $\Cr_n$ is not generated by $\PGL_{n+1}(\C)$ and $\W_n$. Let
\[
\G_n:=\left< \PGL_{n+1}(\C), \W_n\right>.
\] 
Blanc and Hed\'en studied the subgroup $G_n$ of $\Cr_n$ generated by $\PGL_{n+1}(\C)$ and the element $\sigma_n:=[x_0^{-1}:\dots:x_n^{-1}]$ (\cite{Blanc:2014aa}). In particular, they show that $G_n$ is strictly contained in $\G_n$ if and only if $n$ is odd. Further results about the group structure of $G_n$ can be found in \cite{Deserti:2014kq}.

Let $\gamma\colon \C\to \C$ be an automorphism of fields. By acting on the coordinates, $\gamma$ induces a bijective map $\Gamma\colon \p^n\to\p^n$. Conjugation with $\Gamma$ yields a group automorphism of $\Cr_n$ that preserves degrees. Observe that we obtain the image of $g\in\Cr_n$ by letting $\gamma$ operate on the coefficients of $g$. By abuse of notation we denote this automorphism by $\gamma$ as well. In \cite{MR2278755} D\'eserti showed that all automorphisms of $\Cr_2$ are inner up to such  field automorphisms. A generalization of this result is the following theorem by Cantat:

\begin{theorem}[\cite{MR3230847}]\label{thmserge}
Let $M$ be a smooth projective variety of dimension $n$ and $r\in\N$. Let 
\[
\rho\colon \PGL_{r+1}(\C)\to\Bir(M)
\]
be a non-trivial group homomorphism. Then $n\geq r$ and if $n=r$ then $M$ is rational and there exists an automorphism of fields $\gamma\colon\C\to\C$ such that $\rho\circ\gamma$ is conjugate to the standard embedding of $\PGL_{n+1}(\C)$ into $\Cr_n$.
\end{theorem}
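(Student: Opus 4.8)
The plan is to promote the abstract homomorphism $\rho$ to a biregular algebraic action of $G:=\PGL_{r+1}(\C)$ on a smooth projective model of $M$, and then to read off both the inequality $n\geq r$ and the equality case from the orbit geometry of $G$. Since $r\geq 1$, the group $\PSL_{r+1}(\C)=\PGL_{r+1}(\C)$ is simple as an abstract group, so the non-triviality of $\rho$ already forces $\rho$ to be injective. The first and hardest step is to show that $\rho$ is algebraic up to a field automorphism. This cannot be seen on any single abelian or unipotent subgroup, since already $(\C,+)$ and $\C^*$ admit wild abstract homomorphisms into $\Bir(M)$; algebraicity must instead be extracted from the rigidity of the whole group $G$. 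Here I would use that $G$ is generated by its unipotent root subgroups and is almost simple, together with a Borel--Tits type rigidity statement for abstract homomorphisms of simple algebraic groups, to produce a field automorphism $\gamma\colon\C\to\C$ such that $\rho\circ\gamma$ is induced by an honest rational action $G\times M\dashrightarrow M$. The field automorphism is unavoidable precisely because an automorphism of $\C$ twists the $\C$-points of $G$ without altering its abstract-group structure, and this is the obstruction that $\gamma$ absorbs.

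Granting that $\rho\circ\gamma$ comes from a rational action of the connected algebraic group $G$, I would invoke Weil's regularization theorem to replace $M$ by a smooth projective variety $M'$, birational to $M$, on which $G$ acts by biregular automorphisms; this does not change the conjugacy class of the homomorphism. At that point the action is genuinely algebraic, so the degrees of the transformations $\rho(g)$ are bounded: the family $\{\rho(g)\}_{g\in G}$ is parametrized by the variety $G$, on which the degree is a constructible, hence bounded, function. Thus $\rho(G)$ sits inside $\aut(M')$ as a connected algebraic subgroup isomorphic to $G$.

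The inequality $n\geq r$ now follows from the orbit structure. Every proper closed subgroup of $\PGL_{r+1}$ has codimension at least $r$, the minimum being attained by the maximal parabolic stabilizing a point (or a hyperplane) of $\p^r$; equivalently, the smallest non-trivial homogeneous space of $G$ is $\p^r$, of dimension $r$. Because $G$ is connected, every zero-dimensional orbit is a fixed point, so if all orbits were zero-dimensional the action would be trivial; non-triviality therefore produces an orbit of positive dimension, whose dimension is consequently at least $r$. Hence $n=\dim M'\geq r$.

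Finally, assume $n=r$. An orbit of dimension $\geq r=\dim M'$ is open and dense, so $M'$ contains a dense open orbit $G/P$ with $P$ a maximal parabolic of codimension $r$, that is, a copy of $\p^r$ (or of its dual). In particular $M'$, and therefore $M$, is rational. Transporting the action along a birational map $M\dashrightarrow\p^r$ identifying this dense orbit with $\p^r$, the $G$-action becomes the standard linear action of $\PGL_{r+1}(\C)=\aut(\p^r)$, so $\rho\circ\gamma$ is conjugate to the standard embedding $\PGL_{n+1}(\C)\hookrightarrow\Cr_n$ (the two maximal parabolics of codimension $r$ giving the action on $\p^n$ and on its dual, which differ only by an automorphism of $G$). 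This yields the asserted conclusion, and I expect the reduction to an algebraic action in the first step to be the genuine difficulty, the regularization and the orbit-dimension arguments being comparatively formal once it is in place.
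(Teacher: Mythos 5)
This theorem is not proved in the paper at all: it is Cantat's theorem, cited from \cite{MR3230847}, so the only meaningful comparison is with Cantat's own argument. Measured against that, your proposal has a genuine gap at precisely the step you yourself identify as the hardest one. You propose to get algebraicity of $\rho$ (up to a field automorphism) from ``a Borel--Tits type rigidity statement for abstract homomorphisms of simple algebraic groups.'' But Borel--Tits rigidity applies to abstract homomorphisms $G(k)\to H(k')$ whose \emph{target} is the group of rational points of a linear algebraic group; $\Bir(M)$ is nothing of the sort --- it is not an algebraic group, not finite dimensional, and not even an ind-group in a reasonable sense. There is no off-the-shelf rigidity theorem for abstract homomorphisms $\PGL_{r+1}(\C)\to\Bir(M)$; producing one is exactly the content of Cantat's paper, so your step (2) assumes the core of the theorem being proved. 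Cantat's route is also structured differently: he proves the inequality $n\geq r$ \emph{before} and independently of any algebraicity statement, using finite subgroups --- $\PGL_{r+1}(\C)$ contains $(\Z/p\Z)^r$ for every prime $p$, finite group actions are always regularizable, and for $p$ large a faithful $(\Z/p\Z)^k$-action on a smooth projective $n$-fold acts trivially on cohomology, hence has a fixed point at which it linearizes faithfully, forcing $k\leq n$. Only then, in the equality case, does he reconstruct an algebraic action, by linearizing large torsion subgroups of the diagonal torus at a common fixed point and exploiting the commutation relations among root subgroups; the field automorphism $\gamma$ appears when the abstract parametrization of the root groups is compared with the coefficients of that linearization, in the spirit of the fundamental theorem of projective geometry --- not as a quotation of Borel--Tits. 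In your plan, by contrast, even the inequality $n\geq r$ is conditional on the unproven rigidity step.

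The remainder of your outline is fine once an algebraic action is granted: Weil regularization, the fact that every proper closed subgroup of $\PGL_{r+1}(\C)$ has codimension at least $r$, and the identification of a dense open orbit with $\p^r$ in the equality case are all correct and standard. One smaller point deserves attention, though. Your parenthetical remark that the two minimal-codimension parabolics ``differ only by an automorphism of $G$'' conceals a real subtlety: the standard embedding and its twist by $\alpha(g)={}^tg^{-1}$ are \emph{not} conjugate in $\Cr_n$ for $n\geq 2$, because a conjugating map would be a $\PGL_{n+1}(\C)$-equivariant rational map from $\p^n$ to its dual, and none exists (the stabilizer of a point fixes no hyperplane); moreover $\alpha$ is not induced by any field automorphism. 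So even after repairing the rigidity step, your argument delivers the conclusion only up to composition with $\alpha$, and this caveat cannot be argued away --- it reflects an imprecision in the statement as reproduced here, which should be read up to algebraic automorphisms of $\PGL_{n+1}(\C)$ (or, equivalently, as a statement about the conjugacy class of the image rather than of the homomorphism).
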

 
In the Appendix we will prove two corollaries of Theorem \ref{thmserge} that show some implications of this result to group endomorphisms of $\Cr_n$.

\subsection{Algebraic homomorphisms}
We call a group homomorphism $\Psi\colon\Cr_n\to\Bir(M)$ {\it algebraic} if its restriction to $\PGL_{n+1}(\C)$ is an algebraic morphism. The algebraic structure of $\Bir(M)$ and some properties of algebraic homomorphisms will be discussed in Section \ref{algebraichom}. Recall that an element $f\in\Cr_n$ is called {\it algebraic}, if the sequence $\{\deg(f^n)\}_{n\in\N}$ is bounded. 

\subsection{Reducibility} 
\begin{definition}\label{redu}
Let $M$ be a variety $\varphi_M\colon \Cr_n\to\Bir(M)$ a non-trivial algebraic group homomorphism. We say that $\varphi_M$ is {\it reducible} if there exists a variety $N$ such that $0<\dim(N)<\dim(M)$ and an algebraic homomorphism $\varphi_N\colon \Cr_n\to \Bir(N)$ together with a dominant rational map $\pi\colon M\dashrightarrow N$ that is $\Cr_2$-equivariant with respect to the rational actions induced by $\varphi_M$ and $\varphi_N$ respectively. 
\end{definition}
\begin{remark}
In \cite{MR2565534}, Zhang uses the terminology {\it primitive action} for irreducible actions in the sense of Definition \ref{redu}; in \cite{MR2026896}, Cantat says that an action {\it admits a non-trivial factor} if it is reducible.
\end{remark}
Note that if we look at the induced action of $\Cr_n$ on the function field $\C(M)$ of $M$, reducibility is equivalent to the existence of a $\Cr_n$-invariant function field $\C(N)\subset\C(M)$.

\subsection{An example by Gizatullin}
In \cite{MR1714823}, Gizatullin looks at the following question: Let $\psi\colon\PGL_3(\C)\to\PGL_{n+1}(\C)$ be a linear representation. Does $\psi$ extend to a homomorphism $\Psi\colon\Cr_2\to\Cr_n$? He shows that the linear representations given by the action of $\PGL_3(\C)$ on conics, cubics and quartics can be extended to homomorphisms from $\Cr_2$ to $\Cr_5$, $\Cr_9$ and $\Cr_{14}$, respectively. 

In Section \ref{gizatullinexample} we study in detail some geometrical properties of the homomorphism
\[
\Phi\colon\Cr_2\to\Cr_5
\]
that was described by Gizatullin; by construction, the restriction of $\Phi$ to $\PGL_3$ yields the linear representation $\varphi\colon\PGL_3(\C)\to\PGL_6(\C)$ given by the action of $\PGL_3(\C)$ on plane conics.
Among other things, we prove the following:

\begin{theorem}\label{gizatullintheorem} Let $\Phi\colon\Cr_2\to\Cr_5$ be the Gizatullin homomorphism. Then the following is true:
\begin{enumerate}
\item The group homomorphism $\Phi$ is injective and irreducible.
\item The rational action of $\Cr_2$ on $\p^5$ that is induced by $\Phi$ preserves the Veronese surface $V$ and its secant variety $S\subset\p^5$ and induces rational actions of $\Cr_2$ on $V$ and $S$.
\item The Veronese embedding $v\colon\p^2\to\p^5$ is $\Cr_2$-equivariant with respect to the standard rational action on $\p^2$.
\item The surjective secant morphism $s\colon\p^2\times\p^2\to S\subset\p^5$ (see Section \ref{geometry}) is  $\Cr_2$-equivariant with respect to the diagonal action of $\Cr_2$ on $\p^2\times\p^2$.
\item The rational action of $\Cr_2$ on $\p^5$ preserves a volume form on $\p^5$ with poles of order three along the secant variety $S$.
\item The group homomorphism $\Phi$ sends the group of polynomial automomorphisms $\aut(\A^2)\subset\Cr_2$ to $\aut(\A^5)$.
\end{enumerate}
\end{theorem}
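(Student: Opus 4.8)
The plan is to use the Noether--Castelnuovo theorem recalled above: since $\Cr_2=\langle\PGL_3(\C),\sigma\rangle$, every statement of the form ``$\Phi$ preserves a given subvariety'', ``a given map is $\Cr_2$-equivariant'' or ``a given form is $\Cr_2$-invariant'' reduces to checking it separately on $\PGL_3(\C)$ and on the single involution $\sigma$. On $\PGL_3(\C)$ the map $\Phi$ is by construction the conic representation $\varphi$, so these checks are classical projective geometry; on $\sigma$ one uses the explicit formula for $\Phi(\sigma)\in\Cr_5$ from Section~\ref{gizatullinexample} and computes directly. Throughout I identify $\p^5$ with the space of symmetric $3\times3$ matrices up to scalar, on which $\varphi(\PGL_3(\C))$ acts by $A\mapsto gAg^{\mathrm t}$, so that $V$ is the locus of rank-one matrices and $S=\{\det=0\}$ the locus of matrices of rank at most two.

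I would first establish the equivariance statements (2), (3) and (4), from which injectivity in (1) follows formally. Under $v$ a point $p\in\p^2$ is sent to the rank-one matrix $p\,p^{\mathrm t}$; the identity $v\circ g=\varphi(g)\circ v$ for $g\in\PGL_3(\C)$ is immediate from $(gp)(gp)^{\mathrm t}=g(p\,p^{\mathrm t})g^{\mathrm t}$, and $v\circ\sigma=\Phi(\sigma)\circ v$ is a direct computation with the explicit $\Phi(\sigma)$; this gives (3). The same two-step check shows that $\Phi(\Cr_2)$ preserves the rank strata $V$ and $S$ and hence induces rational actions on them, which is (2), and that the secant map $s(p,q)=p\,q^{\mathrm t}+q\,p^{\mathrm t}$ onto $S$ intertwines the diagonal action on $\p^2\times\p^2$ with $\Phi$, which is (4). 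Granting (2) and (3), injectivity is immediate: each $\Phi(g)$ restricts to a birational self-map of $V$, and $v\colon\p^2\xrightarrow{\sim}V$ conjugates this restriction to the standard action of $g$ on $\p^2$; since the latter action is faithful, $\Phi(g)=\id$ forces $g=\id$.

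For irreducibility, suppose $\pi\colon\p^5\dashrightarrow N$ is $\Cr_2$-equivariant, dominant, with $0<\dim N<5$; then $\pi$ is in particular $\PGL_3(\C)$-equivariant. The smooth conics form a dense $\PGL_3(\C)$-orbit $O$, the stabilizer of a point having identity component $\operatorname{PSO}_3(\C)$, so $\pi$ maps $O$ onto a single orbit of $N$ and therefore factors through a homogeneous space $\PGL_3(\C)/H$ with $\operatorname{PSO}_3(\C)\subseteq H^{\circ}$. Since $\operatorname{PSO}_3(\C)$ acts irreducibly on $\C^3$ it lies in no proper parabolic, and as a maximal connected subgroup of $\PGL_3(\C)$ it has no intermediate connected overgroup; thus $H^{\circ}\in\{\operatorname{PSO}_3(\C),\PGL_3(\C)\}$, forcing $\dim N\in\{5,0\}$, a contradiction. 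Hence $\Phi$ is irreducible, completing (1).

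It remains to treat (5) and (6). For the volume form I take the rational $5$-form $\omega$ that in the matrix model is the constant volume form divided by a suitable power of $\det$, normalized so that it descends to $\p^5$; as $\det$ is $\SL_3(\C)$-invariant, $\omega$ is $\PGL_3(\C)$-invariant, and checking $\Phi(\sigma)^{*}\omega=\omega$ by computing the Jacobian of $\Phi(\sigma)$ makes $\omega$ invariant under all of $\Cr_2$ by Noether--Castelnuovo; its pole order along $S$ is then read off from this expression. For (6), by the theorem of Jung and van der Kulk $\aut(\A^2)$ is generated by the affine subgroup $\aff_2$ and the triangular (de Jonqui\`eres) automorphisms, so it suffices, $\aut(\A^5)$ being a group, to send these two families into $\aut(\A^5)$ for the standard chart $\{x_0\neq0\}$: on $\aff_2\subset\PGL_3(\C)$ this is the linear statement that $\varphi(\aff_2)$ fixes the relevant hyperplane and acts by polynomial formulas, while on the triangular generators it follows from the explicit form of $\Phi$. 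The main obstacle throughout is precisely these explicit computations with $\Phi(\sigma)$ and with the triangular generators, which require the exact formula for $\Phi(\sigma)$ and a careful choice of coordinates; by contrast the reduction to generators, the deduction of injectivity from (3), and the irreducibility argument are formal.
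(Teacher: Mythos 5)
Most of your proposal is sound, but part (6) contains a genuine gap. Your reduction via Jung--van der Kulk is correct (and is how the paper begins), but the decisive step---showing that $\Phi$ maps a triangular automorphism such as $f_n^\lambda=(X,\,Y+\lambda X^n)$ into $\aut(\A^5)$---does not ``follow from the explicit form of $\Phi$'', because $\Phi$ has no explicit form on such elements: it is given explicitly only on $\PGL_3(\C)$ and on $\sigma$. To evaluate $\Phi$ on a triangular map one must express it through these generators and compose the corresponding images, and there is no a priori reason such a composition is a polynomial automorphism: the intermediate factors are not in $\aut(\A^5)$ (already $\Phi(\sigma)$ is not, and the paper's $\Phi(s)$ for $s=(X,XY)$, namely $(x_1,x_1x_2,x_1x_4,2x_4x_5-x_3,x_5)$, has a non-polynomial inverse). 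This is exactly where the paper does its real work: Lemma \ref{jonqim} computes $\Phi(f_n^\lambda)$ by induction via $f^\lambda_{n+1}=s f^\lambda_n s^{-1}$, decomposing $s$ into generators to obtain $\Phi(s)$ and $\Phi(s^{-1})$, and establishing closed formulas (the polynomials $A_n,B_n$) that certify polynomiality of the result. Note also that your guiding principle ``check on $\PGL_3(\C)$ and on $\sigma$'' is powerless here: $\Phi^{-1}(\aut(\A^5))$ is indeed a subgroup of $\Cr_2$, but $\sigma$ does not lie in it, so Noether--Castelnuovo gives nothing. Without Lemma \ref{jonqim} or a substitute, your treatment of (6) restates the problem rather than solving it.

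The remaining parts are correct, and in one place genuinely different from the paper. For (2)--(4) your generator-by-generator checks do work out ($v\circ\sigma=\Phi(\sigma)\circ v$, $s\circ(\sigma\times\sigma)=\Phi(\sigma)\circ s$, and $F\circ\Phi(\sigma)=x_0x_1x_2\cdot F$ for the determinantal cubic $F$); the paper instead verifies the $\sigma$-step by a synthetic construction (the conic through the six points $P'_{ij}$) and gets equivariance of $v$ from the uniqueness statement of Corollary \ref{autgn} rather than by computation. Injectivity is deduced exactly as in the paper. Your irreducibility argument is slicker than the paper's: the paper argues case by case for $\dim N=2,3,4$ (Theorem \ref{thmserge} in dimension $2$, Theorem \ref{pglmain} in dimension $3$, a stabilizer-acting-on-a-fiber-curve argument in dimension $4$), whereas you use that the generic stabilizer $\SO_3(\C)$ is a maximal connected subgroup of $\PGL_3(\C)$ (clear from $\mathfrak{sl}_3=\mathfrak{so}_3\oplus V$ with $V$ a $5$-dimensional irreducible $\mathfrak{so}_3$-module), so the generic stabilizer $H$ downstairs has $H^\circ\in\{\SO_3(\C),\PGL_3(\C)\}$ and hence $\dim N\in\{5,0\}$; this disposes of all dimensions at once and is correct. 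For (5) your form coincides with the paper's $\Omega=x_5^6F^{-2}\,dx_0\wedge\cdots\wedge dx_4$; be aware that this form has a pole of order \emph{two} along $S$ (as the paper's own proposition in Section \ref{volume} says), and since the action has a dense orbit it is the unique invariant form up to scalar, so the ``order three'' in the statement of the theorem is an internal inconsistency of the paper, not something your computation (or any) would produce.
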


Note that the injectivity of $\Phi$ follows from (3); in Section \ref{irredsection} irreducibility is proved. Part (2) - (4) of Theorem \ref{gizatullintheorem} will be proved in Section \ref{geometry}, part (5) in Section~\ref{volume} and part (6) in Section \ref{polauto}

The representation $\varphi^\vee$ of $\PGL_3$ into $\PGL_6$ given by $\psi\circ\alpha$, where $\alpha$ is the algebraic homomorphism $g\mapsto {}^tg^{-1}$, is conjugate in $\Cr_5$ to the representation $\varphi$. This conjugation yields the embedding $\Phi^\vee\colon \Cr_2\to\Cr_5$, whose image preserves the secant variety $S$ as well and induces a rational action on it. As the secant variety $S$ is rational, $\Phi$ and $\Phi^\vee$ induce two non-standard embeddings of $\Cr_2$ into $\Cr_4$, which we denote by $\Psi_1$ and $\Psi_2$ respectively.  In Section \ref{inducedsection} we prove the following:

\begin{proposition}\label{inducedembedd}
The two embeddings $\Psi_1, \Psi_2\colon\Cr_2\to\Cr_4$ are not conjugate in $\Cr_4$; moreover they are irreducible and therefore not conjugate to the standard embedding. 
\end{proposition}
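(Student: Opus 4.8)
The plan is to deduce ``not conjugate to the standard embedding'' from irreducibility, and to establish irreducibility and the mutual non-conjugacy of $\Psi_1,\Psi_2$ separately. For the first reduction, observe that a standard embedding into $\Cr_4=\Bir(\p^2\times N)$ with $\dim N=2$ admits the projection to $\p^2$ as a $\Cr_2$-equivariant dominant map onto the standard action, hence is reducible; so an irreducible embedding cannot be conjugate to it. For the remaining two claims I use the identification furnished by Theorem~\ref{gizatullintheorem}(3)--(4) of $S$ with $\Sym^2\p^2$, under which the Veronese $v$ realizes the standard $\p^2$ as the ``diagonal'' and the secant morphism $s\colon\p^2\times\p^2\to S$ is the $\Cr_2$-equivariant double cover onto unordered pairs. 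Since $\PGL_3$ is $2$-transitive on $\p^2$, it already acts on $S$ with a dense orbit isomorphic to $\PGL_3/\widetilde H$, where $\widetilde H=\stab_{\PGL_3}\{P_1,P_2\}$ is the stabilizer of a general unordered pair; here $\dim\widetilde H=4$, and since $\{P_1,P_2\}$ is intrinsic to $\widetilde H$, this stabilizer is self-normalizing in $\PGL_3$.

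For irreducibility I argue by contradiction. A non-trivial $\Cr_2$-factor $\pi\colon S\dashrightarrow N$ is in particular $\PGL_3$-equivariant, so $N$ carries a dense $\PGL_3$-orbit $\PGL_3/\widetilde H'$ with $\widetilde H\subseteq\widetilde H'\subsetneq\PGL_3$. Classifying the intermediate subgroups containing $\widetilde H$ (and the transposition it contains), the only proper one is the stabilizer $P$ of the line $\overline{P_1P_2}$, a maximal parabolic with $\dim P=6$; the dimension count excludes factors of dimension $1$ and $3$. Thus the only candidate is the ``join-line'' map $S\dashrightarrow(\p^2)^\vee$, $\{P_1,P_2\}\mapsto\overline{P_1P_2}$, onto the standard action on $(\p^2)^\vee$. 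I would rule this out by showing the join-line map is not $\Cr_2$-equivariant: using Gizatullin's explicit description of $\Phi(\sigma)$ on $S$, a direct computation shows that $\Phi(\sigma)$ does not carry the fibres of this map to fibres of any algebraic $\Cr_2$-action on $(\p^2)^\vee$. This yields irreducibility of $\Psi_1$, and the argument for $\Psi_2$ is identical after exchanging $\p^2$ and $(\p^2)^\vee$.

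For the non-conjugacy I first normalize. Because $\varphi$ and $\varphi^\vee$ are conjugate in $\PGL_6$ by some $B$, and $B$ preserves the intrinsically defined secant variety $S$, conjugation by $B|_S\in\Bir(S)$ makes $\Psi_1$ and $\Psi_2$ agree on $\PGL_3$; so without loss of generality $\Psi_1|_{\PGL_3}=\Psi_2|_{\PGL_3}$. Any birational conjugacy $A$ between $\Psi_1$ and $\Psi_2$ then centralizes $\Psi_1(\PGL_3)$. But the centralizer of a group acting with dense orbit $\PGL_3/\widetilde H$ is $\norm_{\PGL_3}(\widetilde H)/\widetilde H$, which is trivial since $\widetilde H$ is self-normalizing; hence $A=\id$. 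Therefore $\Psi_1$ and $\Psi_2$ are conjugate if and only if they are equal, and since $\Cr_2=\langle\PGL_3,\sigma\rangle$ by Noether--Castelnuovo it suffices to show $\Psi_1(\sigma)\ne\Psi_2(\sigma)$, which I would verify by computing both images explicitly from Gizatullin's formulas for $\Phi$ and $\Phi^\vee$ on $S$.

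The main obstacle I anticipate is the explicit geometry of the quadratic involution: both the non-equivariance of the join-line map and the inequality $\Psi_1(\sigma)\ne\Psi_2(\sigma)$ rest on controlling how $\sigma$ acts on $\Sym^2\p^2$, which requires Gizatullin's explicit formulas together with careful bookkeeping of the base loci. By contrast, the structural ingredients---the dense-orbit description of the $\PGL_3$-action, the self-normalizing property of $\widetilde H$, and the resulting triviality of the centralizer---are comparatively routine once the identification $S\cong\Sym^2\p^2$ is in place.
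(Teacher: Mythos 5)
Your argument for the non-conjugacy of $\Psi_1$ and $\Psi_2$ breaks down at its first step, and this is a genuine error rather than a fixable omission. The homomorphisms $\varphi$ and $\varphi^\vee$ are \emph{not} conjugate in $\PGL_6(\C)$: they are the projectivisations of the irreducible $\SL_3(\C)$-representations $\Sym^2(\C^3)$ and $\Sym^2\bigl((\C^3)^\vee\bigr)$, which are non-isomorphic, so no $B\in\PGL_6(\C)$ with $B\varphi(g)B^{-1}=\varphi^\vee(g)$ exists; the only conjugating map in sight is the quadratic transformation $Ad$ of Lemma~\ref{ad}, and $Ad$ \emph{contracts} $S$ onto the Veronese surface, so it induces no birational self-map of $S$. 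In fact your normalization ``WLOG $\Psi_1|_{\PGL_3}=\Psi_2|_{\PGL_3}$'' is not merely unjustified but impossible: under $\Psi_1$ the stabilizer of a general point is the stabilizer $\widetilde H$ of an unordered pair of points of $\p^2$, while under $\Psi_2$ it is ${}^t\widetilde H$, the stabilizer of an unordered pair of lines, and these subgroups are not conjugate in $\PGL_3(\C)$ (the identity component of the former fixes exactly two points of $\p^2$, that of the latter exactly one, the intersection point of the two lines). So the centralizer step and the final reduction to $\Psi_1(\sigma)\neq\Psi_2(\sigma)$ never get started. The silver lining is that the obstruction itself proves the proposition: a birational conjugation of $\Psi_1$ to $\Psi_2$ would restrict to a $\PGL_3(\C)$-equivariant isomorphism between the dense orbits and hence force the generic stabilizers to be conjugate; since $\widetilde H\not\sim{}^t\widetilde H$, the two embeddings are non-conjugate already upon restriction to $\PGL_3(\C)$. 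Note that this repaired argument is genuinely different from the paper's, which conjugates the common torus $\Psi_1(D_2)=\Psi_2(D_2)$ to the standard $D_2$ (Theorem~\ref{poptori}), restricts the $\W_2$-action to the space of torus orbits to obtain $\chi_1,\chi_2\colon\W_2\to\Cr_2$, and distinguishes these by degree growth: for $f=[XY:YZ:Z^2]$ the sequence $\deg(\chi_1(f)^n)$ grows linearly while $\deg(\chi_2(f)^n)$ stays bounded.

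The irreducibility half of your proposal is structurally sound, and it actually confronts the one case that is delicate: for a two-dimensional factor, the obstruction used in Section~\ref{irredsection} for $\Phi$ (that $\SO_3(\C)$ embeds in no point stabilizer) has no analogue here, precisely because $\widetilde H$ \emph{is} contained in a parabolic, namely the line stabilizer $P$ you identify; your classification of the intermediate subgroups and the resulting uniqueness of the join-line map as the only candidate factor are correct. But the decisive claim --- that $\Phi(\sigma)$ does not carry fibres of the join-line map to fibres --- is only announced (``I would rule this out by showing\dots''), and it carries the entire weight of irreducibility. For the record it is true and needs only one evaluation of Gizatullin's formula: the pairs of lines $(X+Y+Z)(X+2Y+3Z)=0$ and $(3X+Y-Z)(X+3Y+5Z)=0$ lie in the same fibre (both are singular at $[1:-2:1]$ and avoid the fundamental points of $\sigma$), yet their images under $\Phi(\sigma)$, namely $\bigl[6:3:2:\tfrac{5}{2}:4:\tfrac{9}{2}\bigr]$ and $[-15:-15:9:3:21:-25]$, are singular conics with distinct singular points $[1:-4:3]$ and $[3:-6:-5]$. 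Until you carry out this computation (and its dual counterpart for $\Psi_2$), your write-up establishes neither half of the proposition.
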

Proposition \ref{inducedembedd} shows in particular that there exist at least three different embeddings of $\Cr_2$ into $\Cr_4$.

Since $\Phi$ is algebraic, the images of algebraic elements under $\Phi$ are algebraic again (see Proposition \ref{alghom}). Calculation of the degrees of some examples suggests that $\Phi$ might even preserve the degrees of all elements in $\Cr_2$. However, we were only able to prove the following (Section \ref{polauto}):

\begin{theorem}\label{degreeaut}
Let $\Phi\colon\Cr_2\to\Cr_5$ be the Gizatullin-embedding. Then 
\begin{enumerate}
\item for all elements $f\in\Cr_2$ we have $\deg(f)\leq\deg(\Phi(f))$,
\item for all $g\in\aut(\A^2)\subset\Cr_2$ we have $\deg(g)=\deg(\Phi(g))$.
\end{enumerate}
\end{theorem}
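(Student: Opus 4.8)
The plan is to prove the two inequalities by different means: statement (1) holds on all of $\Cr_2$ and follows from a single degree computation along the Veronese surface, whereas statement (2) combines (1) with a reverse estimate that is available only because $\aut(\A^2)$ has a very rigid structure. For (1) I would use the $\Cr_2$-equivariance of the Veronese embedding $v\colon\p^2\to\p^5$ from part (3) of Theorem \ref{gizatullintheorem}, which gives the equality of rational maps $\Phi(f)\circ v=v\circ f$ for every $f\in\Cr_2$. Writing $f=[f_0:f_1:f_2]$ in reduced form with $\deg f_i=d$, the composite $v\circ f=[f_0^2:f_0f_1:f_0f_2:f_1^2:f_1f_2:f_2^2]$ has no common factor (a common prime factor of these six quadratic expressions would divide each $f_i$), so its reduced degree is exactly $2d$. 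If instead $\Phi(f)=[P_0:\dots:P_5]$ is reduced of degree $D$, then $\Phi(f)\circ v$ is obtained by substituting the quadratic Veronese monomials, so its six components have degree $2D$ and may acquire a common factor $h$; its reduced degree is therefore $2D-\deg h\le 2D$. Equality of the two rational maps forces equality of reduced degrees, so $2d=2D-\deg h$ and hence $\deg f=d\le D=\deg\Phi(f)$.

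For (2) the inequality $\deg g\le\deg\Phi(g)$ is already contained in (1), so it remains to prove $\deg\Phi(g)\le\deg g$ for $g\in\aut(\A^2)$. Here I would first pass to the affine charts $x_0\ne0$ and $y_0\ne0$. By part (6) of Theorem \ref{gizatullintheorem} both $g$ and $\Phi(g)$ are genuine polynomial automorphisms, of $\A^2$ and of $\A^5$ respectively, and for a polynomial automorphism the homogeneous degree in $\Cr_n$ agrees with its affine polynomial degree (homogenising from the standard chart introduces no common factor of $x_0$, since the top-degree component is never divisible by $x_0$). Thus the statement becomes one about affine polynomial degrees, and restricting the equivariance of $v$ to the affine chart exhibits $\Phi(g)$ as a polynomial automorphism preserving the surface $W=v_{\mathrm{aff}}(\A^2)=\{Y_3=Y_1^2,\,Y_4=Y_1Y_2,\,Y_5=Y_2^2\}$ and acting there as $g$, where $v_{\mathrm{aff}}(X_1,X_2)=(X_1,X_2,X_1^2,X_1X_2,X_2^2)$.

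The reverse inequality would then be propagated from generators. Using the theorem of Jung and van der Kulk I would write $\aut(\A^2)$ as the amalgamated product of $\aff_2$ and the de Jonquières subgroup $J$, decompose a given $g$ into a reduced word $g=a_0\tau_1a_1\cdots\tau_k a_k$ with $a_i$ affine and $\tau_i$ triangular, and use the classical fact that the degree is multiplicative along such a reduced decomposition, $\deg g=\prod_i\deg\tau_i$. On $\aff_2\subset\PGL_3$ the homomorphism $\Phi$ restricts to the linear representation $\varphi$, so $\Phi(a_i)\in\PGL_6$ has degree $1$; if one establishes the bound $\deg\Phi(\tau)\le\deg\tau$ for each triangular generator $\tau$, then submultiplicativity of the degree in $\Cr_5$ gives $\deg\Phi(g)\le\prod_i\deg\Phi(\tau_i)\le\prod_i\deg\tau_i=\deg g$, and combining this with (1) yields the asserted equality.

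The main obstacle is precisely the generator bound $\deg\Phi(\tau)\le\deg\tau$, and it cannot be obtained from the equivariance relations alone. The reason is that equivariance pins down the component functions of $\Phi(g)$ only on $W$, i.e. only modulo the ideal $(Y_3-Y_1^2,\,Y_4-Y_1Y_2,\,Y_5-Y_2^2)$ of $W$; one may add to them multiples of this ideal of arbitrarily large degree, so the restriction argument gives no upper bound on $\deg\Phi(g)$ at all and merely reproves (1). The sharp estimate must instead be read off from the explicit description of $\Phi$ on $\aut(\A^2)$ produced in Section \ref{polauto} in the course of proving part (6): for a triangular map $\tau=(X_1,X_2+p(X_1))$ the first two coordinates of $\Phi(\tau)$ reproduce $\tau$, while the apparent degree-$2\deg p$ contributions forced in the last three coordinates by the quadratic entries of a conic must be shown to collapse to degree $\deg p$ in the actual automorphism. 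Verifying this collapse for all triangular generators — equivalently, confirming that Gizatullin's construction sends each de Jonquières generator to a polynomial automorphism of $\A^5$ of the same degree — is the real content of the argument; once it is in place, the amalgamated-product degree formula and submultiplicativity deliver the inequality, and (1) upgrades it to the equality in (2).
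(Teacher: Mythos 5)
Your proof of part (1) is correct, and it takes a genuinely different route from the paper. The paper (Proposition \ref{degreegeq}) argues with intersection numbers: a general conic $C'$ maps under $v$ to a degree-$4$ curve, $\Phi(f)(v(C'))=v(f(C'))$ meets a hyperplane in $4d$ points while having degree at most $4\deg(\Phi(f))$, whence $d\le\deg(\Phi(f))$. You instead compare reduced homogeneous representatives: $v\circ f$ has reduced degree exactly $2d$ (a common prime factor of the six products $f_if_j$ would divide each $f_i$), while $\Phi(f)\circ v$ has reduced degree at most $2\deg(\Phi(f))$, and equality of rational maps forces $2d\le 2\deg(\Phi(f))$. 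Both arguments rest on the equivariance of $v$ (part (3) of Theorem \ref{gizatullintheorem}); yours is more elementary, needing no intersection theory, and is perfectly sound (the composition $\Phi(f)\circ v$ makes sense because the Veronese surface is not contained in the indeterminacy locus of $\Phi(f)$).

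For part (2), however, your proposal has a genuine gap, one you yourself flag: the entire argument reduces, via Jung--van der Kulk, multiplicativity of degree along reduced words, $\Phi(\aff_2)\subset\aff_5$ (Lemma \ref{afftoaff}) and submultiplicativity in $\Cr_5$, to the bound $\deg(\Phi(\tau))\le\deg(\tau)$ for triangular maps $\tau=(X,Y+P(X))$ --- and that bound is never proved. You correctly observe that equivariance cannot give it (it only pins down $\Phi(\tau)$ along the Veronese surface), and you correctly guess that the naive degree $2\deg P$ coming from the quadratic conic coefficients must ``collapse'' to $\deg P$; but asserting that this collapse ``must be shown'' is not showing it. This collapse is precisely the technical core of the paper's proof: Lemma \ref{jonqim} computes $\Phi(f_n^\lambda)$ for $f_n^\lambda=(X,Y+\lambda X^n)$ explicitly in terms of polynomials $A_n,B_n$ of degrees $n-1$ and $n$ satisfying the Chebyshev-type recursions $A_n=2x_5A_{n-1}-x_1A_{n-2}$, $B_n=2x_5B_{n-1}-x_1B_{n-2}$ (the computation itself uses the conjugation $f_{n+1}^\lambda=sf_n^\lambda s^{-1}$ with $s=(X,XY)$ and the explicit form of $\Phi(s)$); Lemma \ref{AnBm} proves the cancellation identity that $A_nB_{m-1}-A_{n-1}B_m$ has degree $<\max\{m,n\}$; and Lemma \ref{imj} propagates this by induction to arbitrary products $f_1^{\lambda_1}\cdots f_n^{\lambda_n}$, i.e.\ to arbitrary $P$, where the cross terms $A_kB_{n-1}-A_{k-1}B_n$ are exactly what threatens to raise the degree and is killed by the cancellation lemma. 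Without this computation (or an equivalent one), your reduction establishes nothing beyond part (1); so the scheme is right --- and matches the paper's --- but the proof of (2) is incomplete.
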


The image of the Weyl group $\W_2$ under $\Phi$ is not contained in the Weyl group~$\W_5$. {\it More generally, it can be shown that there exists no algebraic homomorphism from $\Cr_2$ to $\Cr_5$ that preserves automorphisms, diagonal automorphisms and the Weyl group} (see \cite{longversion}).

\subsection{Algebraic embeddings in codimension 1} 
In Section \ref{pglnactions} and Section \ref{extension} we look at algebraic homomorphisms $\Cr_n\to\Bir(M)$ in the case where $M$ is a smooth projective variety of dimension $n+1$ for $n\geq 2$.

\begin{example}\label{pntimesc}
For all curves $C$ of genus $\geq 1$, the variety $\p^n\times C$ is not rational and there exists the standard embedding $\Psi_C\colon\Cr_n\to\Bir(\p^n\times C)$.
\end{example}

\begin{example}\label{defl}
$\Cr_n$ acts rationally on the total space of the canonical bundle of $\p^n$
\[
K_{\p^n}\simeq\mathcal{O}_{\p^n}(-(n+1))\simeq\bigwedge^n (\T\p^n)^\vee
\]
by $f(p, \omega)=(f(p), \omega\circ (df_p)^{-1}),$
where $p\in\p^n$ and $\omega\in \bigwedge^n(\T_p\p^2)^\vee$. This action extends to the projective completion 
\[
F_1:=\p(\mathcal{O}_{\p^n}\oplus\mathcal{O}_{\p^n}(-(n+1))).
\]

More generally, we obtain an action of $\Cr_n$ on the total space of the bundle $K_{\p^n}^{\otimes l}\simeq\mathcal{O}_{\p^n}(-(n+1)l)$ and on its projective completion 
\[
F_l:=\p(\mathcal{O}_{\p^n}\oplus\mathcal{O}_{\p^n}(-l(n+1))
\]
for all $l\in\Z_{\geq 0}$.  This yields a countable family of injective homomorphisms 
\[
\Psi_{l}\colon \Cr_n\to \Bir(F_l).
\]

We can choose affine coordinates $(x_1,\dots,x_n,x_{n+1})$  of $F_{l}$ such that $\Psi_{l}$ is given by
 \[
 \Psi_{l}(f)(x_1,\dots,x_n,x_{n+1})=(f(x_1,\dots,x_n), J(f(x_1,\dots,x_n))^{-l} x_{n+1}).
 \]
  Here, $J(f(x_1,\dots,x_n))$ denotes the determinant of the Jacobian of $f$ at the point $(x_1,\dots,x_n)$.
Observe that $\Psi_0$ is conjugate to the standard embedding.
\end{example}

\begin{example}\label{defb}
Let $\p(\T\p^2)$ be the total space of the fiberwise projectivisation of the tangent bundle over $\p^2$. Then $\p(\T\p^2)$ is rational and there is an injective group homomorphism
 \[
 \Psi_B\colon \Cr_2\to \Bir(\p(\T\p^2))
 \] 
 defined by 
$ \Psi_B(f)(p, v)\coloneqq(f(p), \p (df_p)(v)).$ 
 Here, $\p (df_p)\colon \p \T_p\to \p \T_{f(p)}$ defines the projectivisation of the differential $df_p$ of $f$ at the point $p\in\p^2$. 
\end{example}

\begin{example}\label{grass}
The Grassmannian of lines in the projective 3-space $\mathbb{G}(1,3)$ is a rational variety of dimension 4 with a transitive algebraic $\PGL_4(\C)$-action. This action induces an algebraic embedding of $\PGL_4(\C)$ into $\Cr_4$. In Proposition \ref{nograss} we will show that the image of this embedding does not lie in any subgroup isomorphic to $\Cr_3$. {\it So no group action of $\PGL_4(\C)$ on $\mathbb{G}(1,3)$ by automorphisms can be extended to a rational action of $\Cr_3$.}
\end{example}

The classification of $\PGL_{n+1}$-actions on smooth projective varieties of dimension $n+1$ is well known to the experts; in Section  \ref{pglnactions} we study their conjugacy classes. In fact, we will see that Examples \ref{pntimesc} to \ref{grass} describe up to birational conjugation and up to algebraic homomorphisms of $\PGL_{n+1}$ all possible $\PGL_{n+1}$-actions on smooth projective varieties of dimension $n+1$ and that these actions are not birationally conjugate to each other. This yields a classification of algebraic homomorphisms of $\PGL_{n+1}$ to $\Bir(M)$. We will study in Section \ref{extension} how these actions extend to rational actions of $\Cr_n$ on $M$.

\begin{theorem}\label{crmain}
Let $n\geq 2$ and let $M$ be a complex projective variety of dimension $n+1$ and let $\varphi\colon\PGL_{n+1}(\C)\to\Bir(M)$ be a non-trivial algebraic homomorphism, then $\varphi$ is conjugate to one of the embeddings described in Example $\ref{pntimesc}$ to $\ref{defb}$. If $\varphi$ is not conjugate to the action described in Example \ref{grass}, then there exists up to conjugation a unique algebraic homomorphism $\alpha$ of $\PGL_{n+1}(\C)$ such that $\varphi\circ\alpha$ extends to a homomorphism of $\Cr_n$ to $\Bir(M)$. Moreover, this extension is unique if restricted to the subgroup $\G_n=\left<\PGL_{n+1}(\C),\W_n\right>\subset\Cr_n$. 
\end{theorem}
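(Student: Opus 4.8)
The plan is to separate the classification of the $\PGL_{n+1}(\C)$-action from the problem of extending it to $\Cr_n$, following the two sections cited in the text.

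\emph{Classification.} First I would pass to a smooth projective model of $M$ carrying a biregular $\PGL_{n+1}(\C)$-action (by equivariant completion and resolution) and examine the orbits. Since $\PGL_{n+1}(\C)$ is simple of dimension $n^2+2n$ and $\dim M=n+1$, the generic orbit has dimension $n$ or $n+1$. If it has dimension $n$, the orbit map produces a fibration $M\dashrightarrow B$ onto a curve whose generic fibre is an $n$-dimensional rational homogeneous space; by Theorem \ref{thmserge} this fibre is $\p^n$ with the standard action. Simplicity of $\PGL_{n+1}(\C)$ forbids any non-trivial morphism to $\aut(B)$ for $n\geq 2$ (a curve of genus $\geq 2$ has finite automorphism group, an elliptic curve an abelian one, and $\aut(\p^1)=\PGL_2(\C)$ admits no embedding of $\PGL_{n+1}(\C)$), so the action on $B$ is trivial, the fibration is equivariantly birationally trivial, and $M$ is birational to $\p^n\times B$, i.e. Example \ref{pntimesc} (the case $B=\p^1$ being the standard embedding). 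If the generic orbit is dense, then $M$ is birational to a homogeneous space $\PGL_{n+1}(\C)/H$ with $\dim H=n^2+n-1$; classifying such subgroups $H$ up to conjugacy (well known to the experts) yields precisely the bundles $F_l$ of Example \ref{defl}, the variety $\p(\T\p^2)$ of Example \ref{defb}, and, only when $n=3$, the Grassmannian $\mathbb{G}(1,3)$ of Example \ref{grass}.

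\emph{Existence of the extension.} Each non-Grassmannian model already carries an explicit $\Cr_n$-action, defined by the Jacobian or differential cocycle in the Examples, so existence of an extension reduces to identifying $\varphi$ with the $\PGL_{n+1}(\C)$-restriction of the appropriate example. The only remaining freedom is an algebraic automorphism of $\PGL_{n+1}(\C)$: for $n\geq 2$ the inner automorphisms are realised by conjugation inside $\PGL_{n+1}(\C)\subset\Cr_n$, and the outer automorphism group is generated by $g\mapsto {}^tg^{-1}$. The decisive point is that a birational self-map of $\p^n$ acts on points and not on hyperplanes, so only the ``points'' normalisation of the base action is compatible with the cocycle defining the extension; hence exactly one of the two classes---the identity or the outer automorphism---makes $\varphi\circ\alpha$ extendable, which determines $\alpha$ uniquely up to inner automorphisms.

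\emph{Uniqueness on $\G_n$.} Since $\G_n=\left<\PGL_{n+1}(\C),\W_n\right>$, two extensions that agree on $\PGL_{n+1}(\C)$ coincide on $\G_n$ if and only if they agree on $\W_n$. Modulo the subgroup of coordinate permutations (which lies in $\PGL_{n+1}(\C)$), $\W_n$ is generated by the single standard involution $\sigma$, so it suffices to show that the image of $\sigma$ is forced by $\varphi$. Here I would use that $\sigma$ normalises the diagonal torus $D_n$ and acts on it by inversion: any extension must send $\sigma$ to a birational map normalising the torus $\varphi(D_n)\subset\Bir(M)$ and inducing $t\mapsto t^{-1}$ on it, which pins its image down up to the centraliser $\cent_{\Bir(M)}(\varphi(D_n))$. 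Exploiting the further conjugation relations between $\sigma$ and elements of $\PGL_{n+1}(\C)$, together with the explicit geometry of $\varphi(D_n)$ as a maximal torus with a dense orbit on the base $\p^n$, then removes the centraliser ambiguity. For $n=2$ the theorem of Noether and Castelnuovo gives $\Cr_2=\G_2$, so in that case this already yields full uniqueness of the extension.

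The main obstacle I anticipate is precisely this last step: controlling the image of $\sigma$ modulo $\cent_{\Bir(M)}(\varphi(D_n))$ and showing that the residual ambiguity vanishes, which is where the detailed geometry of the individual models \ref{pntimesc}--\ref{defb} must be brought in; the classification of the dense-orbit stabilisers $H$ is the other point requiring care, though it is standard.
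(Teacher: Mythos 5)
Your overall skeleton (classify the $\PGL_{n+1}(\C)$-action first, then handle the extension case by case via the torus $D_n$, its orbit fibration, and relations involving $\sigma$) matches the paper's strategy, but two steps that carry the real weight of the theorem are wrong or missing as stated.

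First, your reduction of the uniqueness on $\G_n$ to determining the image of $\sigma$ rests on the claim that, modulo coordinate permutations, $\W_n$ is generated by $\sigma$. This is false: the coordinate permutations $\s_{n+1}\subset\PGL_{n+1}(\C)$ together with $\sigma_n$ generate only a \emph{finite} subgroup of $\W_n\simeq\GL_n(\Z)$, which is infinite. What is actually relevant is whether $\left<\PGL_{n+1}(\C),\sigma_n\right>=\G_n$, and by the Blanc--Hed\'en result quoted in the paper this holds if and only if $n$ is even; for odd $n\geq 3$ the inclusion is strict. So for odd $n$ your argument pins down the extension only on a proper subgroup of $\G_n$. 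The paper must (and does) work with the additional monomial generator $f_B=(x_1x_2,x_3,\dots,x_n)$ of Lemma \ref{gengn}, and determines $\Psi(f_B)$ separately using the relation $f_B=f_Df_Cf_Ef_D^{-1}$ together with further commutation relations (end of the proof of Proposition \ref{phil}, and the proof of Corollary \ref{autgn}).

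Second, your ``decisive point'' for the uniqueness of $\alpha$ --- that a birational self-map of $\p^n$ ``acts on points and not on hyperplanes'', so only one normalisation is compatible with the cocycle --- is an intuition, not an argument; nothing in it would break if one formally replaced $\varphi$ by $\varphi\circ\alpha$. The paper proves non-extendability of the dual twist by exhibiting explicit relations of $\G_n$ that fail: for $\p(T\p^2)$, any extension of $\varphi_B\circ\alpha$ is first shown to send $\sigma$ to $\Phi_B(\sigma)$, and then $(\Psi(\sigma)\Psi(g))^3\neq\id$ for $g=[z-x:z-y:z]$, contradicting Lemma \ref{relations}; for $F_l$ and $C\times\p^n$ the same role is played by Corollary \ref{autgn}, whose proof hinges on $\sigma_ng_n\sigma_ng_n\sigma_ng_n=\id$ versus $\sigma_n\alpha(g_n)\sigma_n\alpha(g_n)\sigma_n\alpha(g_n)\neq\id$. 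Without a computation of this kind the ``unique $\alpha$'' clause of the theorem is simply unproven. (By contrast, the gaps you flag in the classification step --- equivariant birational triviality of the $\p^n$-fibration and the list of codimension-$(n+1)$ stabilizers --- are acceptable appeals to known results; the paper itself cites Theorem \ref{pglvar} for this and then only adds the cohomological uniqueness argument and the non-conjugacy via Lemma \ref{exci}.)
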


Theorem \ref{crmain} classifies all group homomorphisms $\Psi\colon\G_n\to\Bir(M)$ for projective varieties $M$ of dimension $n+1$ such that the restriction to $\PGL_{n+1}(\C)$ is a morphism. By the theorem of Noether and Castelnuovo, we obtain in particular a full classification of all algebraic homomorphisms from $\Cr_2$ to $\Bir(M)$ for projective varieties $M$ of dimension 3:

\begin{corollary}
Let $M$ be a projective variety of dimension $3$ and $\Psi\colon\Cr_2\to\Bir(M)$ a non-trivial algebraic group homomorphism. Then $\Psi$ is conjugate to exactly one of the homomorphisms described in Example $\ref{pntimesc}$ to $\ref{defb}$. \end{corollary}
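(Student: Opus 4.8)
The plan is to derive the corollary from Theorem~\ref{crmain} applied with $n=2$, the two extra ingredients being the theorem of Noether and Castelnuovo and the perfectness of $\Cr_2(\C)$. The first step is to observe that $\Cr_2=\G_2$: by the theorem of Noether and Castelnuovo $\Cr_2=\left<\PGL_3(\C),\sigma\right>$, and since $\sigma\in\W_2$ this gives $\Cr_2\subseteq\left<\PGL_3(\C),\W_2\right>=\G_2\subseteq\Cr_2$. Hence a non-trivial algebraic homomorphism $\Psi\colon\Cr_2\to\Bir(M)$ is exactly a homomorphism $\G_2\to\Bir(M)$ whose restriction to $\PGL_3(\C)$ is an algebraic morphism, i.e. precisely the kind of homomorphism to which Theorem~\ref{crmain} applies.

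Next I would check that $\varphi\coloneqq\Psi|_{\PGL_3(\C)}$ is non-trivial, so that the hypotheses of Theorem~\ref{crmain} are satisfied. If $\varphi$ were trivial, then $\PGL_3(\C)\subseteq\ker\Psi$, and since $\Cr_2=\left<\PGL_3(\C),\sigma\right>$ the image of $\Psi$ would be generated by the single involution $\Psi(\sigma)$, hence abelian. Thus $\Psi$ would factor through the abelianisation of $\Cr_2(\C)$; as $\Cr_2(\C)$ is perfect, this forces $\Psi$ to be trivial, contradicting the hypothesis.

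Finally I would invoke Theorem~\ref{crmain} for $n=2$. Since $\dim(M)=3<4$, the Grassmannian action of Example~\ref{grass} (which occurs only in dimension $4$) cannot arise, so $\varphi$ is conjugate to the $\PGL_3(\C)$-action underlying one of Examples~\ref{pntimesc}, \ref{defl} and \ref{defb}. Because $\Psi$ is an extension of $\varphi$ to $\G_2=\Cr_2$, and each model $\Psi_C,\Psi_l,\Psi_B$ is itself such an extension of the corresponding action, the uniqueness clause of Theorem~\ref{crmain} (the extension to $\G_2$ exists only for the distinguished twist $\alpha$ and is then unique) identifies $\Psi$, up to conjugation, with the relevant model embedding. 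To obtain the word ``exactly'', I would use that these models are pairwise non-conjugate: by the results of Section~\ref{pglnactions} the underlying $\PGL_3(\C)$-varieties $\p^2\times C$, $F_l$ and $\p(\T\p^2)$ are pairwise non-equivalent, and within the first two families non-birational curves $C$, respectively distinct integers $l$, give non-conjugate embeddings.

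The heart of the matter is already carried by Theorem~\ref{crmain}; inside the corollary the only delicate points are the reduction to $\varphi$ being non-trivial, which rests on the perfectness of $\Cr_2(\C)$, and the careful bookkeeping that turns ``the $\PGL_3(\C)$-action extends'' into ``$\Psi$ equals a model up to conjugacy''. Concretely, one must combine the uniqueness of the distinguished twist $\alpha$ with the uniqueness of the extension on $\G_2=\Cr_2$, and separately use the non-conjugacy of the three families, to upgrade ``conjugate to one of them'' to ``conjugate to exactly one''.
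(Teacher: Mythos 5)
Your proposal is correct and takes essentially the same route as the paper: the paper's entire proof is the observation that Noether--Castelnuovo gives $\Cr_2=\G_2$, so that Theorem~\ref{crmain} (with the Grassmannian case vacuous for $n=2$) applies verbatim, and your extra bookkeeping about the distinguished twist, uniqueness of the extension on $\G_2$, and pairwise non-conjugacy just makes explicit what the paper leaves implicit. The only cosmetic difference is where you rule out $\PGL_3(\C)\subseteq\ker\Psi$ via perfectness of $\Cr_2$; the paper's own mechanism for this point (used e.g.\ in Proposition~\ref{nograss}) is the Appendix fact that the normal closure of $\PGL_{n+1}(\C)$ in $\G_n$ is all of $\G_n$ --- both arguments are valid and rest on the same relation $(\sigma h)^3=\id$.
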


The following observations are now immediate:

\begin{corollary}\label{cor3}
Let $M$ be a projective variety of dimension $3$ and $\Psi\colon\Cr_2\to\Bir(M)$ a non-trivial algebraic homomorphism. Then
\begin{enumerate}
\item $\Psi$ is injective.
\item There exists a $\Cr_2$-equivariant rational map $f\colon M\dashrightarrow \p^2$ with respect to the rational action induced by $\Psi$ and the standard action respectively. In particular, all algebraic homomorphisms from $\Cr_2$ to $\Bir(M)$ are reducible.

\item There exists an integer $C_\Psi\in\Z$ such that 
\[
1/C_\Psi\deg(f)\leq \deg(\Psi(f))\leq C_\Psi\deg(f).
\]
\end{enumerate}
\end{corollary}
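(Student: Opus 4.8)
The plan is to deduce all three statements from the classification established in the preceding corollary, which identifies $\Psi$, up to conjugation by some fixed birational map $A\colon M\dashrightarrow M'$, with one of the explicit model embeddings $\Psi'$ of Examples \ref{pntimesc}, \ref{defl} and \ref{defb}. The common feature I would exploit is that each model variety $M'$ --- namely $\p^2\times C$, $F_l$, or $\p(\T\p^2)$ --- carries a natural dominant projection $\pi\colon M'\to\p^2$ (the projection onto the first factor, respectively the bundle projection) and that, by the very definition of these embeddings, $\pi$ is $\Cr_2$-equivariant with respect to the given action and the standard action on $\p^2$.

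For part (2) I would set $f\coloneqq\pi\circ A\colon M\dashrightarrow\p^2$, which is dominant, and feed the conjugation relation $\Psi(g)=A^{-1}\circ\Psi'(g)\circ A$ into the equivariance $\pi\circ\Psi'(g)=g\circ\pi$. This gives $f\circ\Psi(g)=\pi\circ\Psi'(g)\circ A=g\circ\pi\circ A=g\circ f$ for all $g\in\Cr_2$, which is exactly the asserted equivariance, and hence reducibility. Part (1) then comes for free: if $g\in\ker\Psi$ then $\Psi(g)=\id_M$, so $f=g\circ f$; since $f$ is dominant and the standard action of $\Cr_2=\Bir(\p^2)$ on $\p^2$ is faithful, this forces $g=\id$, so $\Psi$ is injective.

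Part (3) is where the real work lies. After fixing a projective model and a polarization of $M$ (so that degree functions on $\Bir(M)$ are defined and, as recalled in Section \ref{algebraichom}, submultiplicative up to a fixed constant), I would first reduce to the model embeddings: conjugation by the fixed map $A$ changes degrees by at most a bounded multiplicative factor, since applying submultiplicativity to $\Psi(g)=A^{-1}\Psi'(g)A$ and symmetrically to $\Psi'(g)=A\Psi(g)A^{-1}$ yields $\deg(\Psi(g))\asymp\deg(\Psi'(g))$ with implied constants controlled by $\deg(A)$ and $\deg(A^{-1})$. It then remains to prove $\deg(\Psi'(g))\asymp\deg(g)$ for each model. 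The lower bound is the conceptual half: pulling back $\mathcal{O}_{\p^2}(1)$ along $\pi$ produces a fixed class on $M'$ whose transform under $\Psi'(g)$ is governed, via $\pi^*g^*=\Psi'(g)^*\pi^*$, by $\deg(g)$, giving $\deg(\Psi'(g))\ge c\,\deg(g)$ uniformly in $g$. The upper bound is the genuine computation and, I expect, the main obstacle: one must bound the degree of the fibre data. For $\p^2\times C$ the fibre coordinate is fixed and the estimate is immediate; for $F_l$ the fibre transforms by $J(g)^{-l}$ and for $\p(\T\p^2)$ by the projectivised differential $\p(dg_p)$, so one has to control the degrees of the Jacobian determinant and of the partial derivatives of a degree-$d$ map, each of which grows at most linearly in $d=\deg(g)$. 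Assembling these linear bounds (with a constant depending on $l$, hence on $\Psi$) gives $\deg(\Psi'(g))\le C\,\deg(g)$, and combining the two bounds with the conjugation estimate produces the desired integer $C_\Psi$.
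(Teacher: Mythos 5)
Your proposal is correct and follows exactly the route the paper intends: the paper states this corollary as an immediate consequence of the preceding classification (``The following observations are now immediate''), and your argument is precisely that deduction with the details written out --- the equivariant projection $f=\pi\circ A$, injectivity via dominance of $f$ and faithfulness of the $\Cr_2$-action on $\p^2$, and the two-sided degree bounds obtained from the explicit form of the three model embeddings together with boundedness of degree distortion under conjugation by a fixed birational map. The one cosmetic slip is that submultiplicativity of degrees up to a fixed constant is not actually recalled in Section 2 of the paper; it is a standard external fact (due to Dinh and Sibony for maps of smooth projective varieties), and invoking it does not affect the correctness of your argument.
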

Note that Part (3) of Corollary \ref{cor3} resembles in some way Theorem \ref{degreeaut}. It seems to be an interesting question how the degree of the image of an element $f\in\Cr_2$ under an algebraic homomorphism is related to the degree of $f$.

\subsection{Acknowledgements}
I thank my PhD advisors J\'er\'emy Blanc and Serge Cantat for their constant support, all the interesting discussions and for the helpful remarks on previous versions of this article.


\section{Algebraic homomorphisms}\label{algebraichom}

In this section we recall some results on the algebraic structure of $\Bir(M)$ and of some of its subgroups and we discuss our notion of algebraic homomorphisms. 

\subsection{The Zariski topology}
We can equip $\Bir(M)$ with the so-called Zariski topology. Let $A$ be an algebraic variety and 
\[
f\colon A\times M\dashrightarrow A\times M
\] 
an $A$-birational map inducing an isomorphism between open subsets $U$ and $V$ of $A\times M$ such that the projections from $U$ and from $V$ to $A$ are both surjective. For each $a\in A$ we obtain therefore an element of $\Bir(M)$ defined by $x\mapsto p_2(f(a,x))$, where $p_2$ is the second projection. Such a map $A\to \Bir(M)$ is called a {\it morphism} or {\it family of birational transformations parametrized by $A$}.

\begin{definition}
The Zariski topology on $\Bir(M)$ is the finest topology such that all morphisms $f\colon A\to \Bir(M)$ for all algebraic varieties $A$ are continuous (with respect to the Zariski topology on $A$).
\end{definition}

The map 
 $\iota\colon\Bir(M)\to\Bir(M),\,x\mapsto x^{-1}$
 is continuous as well as the maps $x\mapsto g\circ x$ and $x\mapsto x\circ g$ for any $g\in\Bir(M)$. This follows from the fact that the inverse of an $A$-birational map as above is again an $A$-birational map as is the right/left-composition with an element of $\Bir(M)$.
The Zariski topology was introduced in \cite{MR0284446} and \cite{serre2008groupe} and studied in \cite{MR3092478}.


\subsection{Algebraic subgroups}\label{algsubgroups}
An algebraic subgroup of $\Bir(M)$ is the image of an algebraic group $G$ by a morphism $G\to \Bir(M)$ that is also an injective group homomorphism. It can be shown that algebraic groups are closed in the Zariski topology and of bounded degree in the case of $\Bir(M)=\Cr_n$. Conversely, closed subgroups of bounded degree in $\Cr_n$ are always algebraic subgroups with a unique algebraic group structure that is compatible with the Zariski topology (see \cite{MR3092478}). 

Let $N$ be a smooth projective variety that is birationally equivalent to $M$. Let $G$ be an algebraic group acting regularly and faithfully on $N$. This yields a morphism $G\to\Bir(M)$, so $G$ is an algebraic subgroup of $\Bir(M)$. On the other hand, a theorem by Weil states that all algebraic subgroups of $\Bir(M)$ have this form.

\begin{theorem}[\cite{MR0074083}, \cite{MR0337963}, \cite{MR1389430}]
Let $G\subset\Bir(M)$ be an algebraic subgroup. Then there exists a smooth projective variety $N$ and a birational map $f\colon M\dashrightarrow N$ that conjugates $G$ to a subgroup of $\aut(N)$ such that the induced action on $N$ is algebraic.
\end{theorem}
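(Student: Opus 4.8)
The statement is Weil's regularisation theorem packaged together with an equivariant completion and resolution, so the plan is to reduce to Weil's original construction and then upgrade the resulting model to a smooth projective one. The hypothesis that $G\subset\Bir(M)$ is an algebraic subgroup gives a rational action $a\colon G\times M\dashrightarrow M$, $(g,m)\mapsto g\cdot m$, satisfying $e\cdot m=m$ and $g\cdot(h\cdot m)=(gh)\cdot m$ wherever both sides are defined; equivalently the map $\beta\colon G\times M\dashrightarrow G\times M$, $(g,m)\mapsto(g,\,g\cdot m)$, is birational, with inverse $(g,m)\mapsto(g,\,g^{-1}\cdot m)$.

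First I would invoke Weil's theorem on the regularisation of birational group actions to produce a variety $M_0$ together with a $G$-equivariant birational map $M\dashrightarrow M_0$ such that the induced action $G\times M_0\to M_0$ is everywhere defined, and hence biregular since both $g$ and $g^{-1}$ then act by morphisms. The heart of Weil's argument is to glue, over a dense open subset of $G$, the translates of a fixed domain of definition of $a$: for a general point $m$ the orbit map $g\mapsto g\cdot m$ is defined on a dense open subset of $G$, and the cocycle identity allows one to transport a local regularisation at a single pair $(g_0,m_0)$ to a whole neighbourhood by composing with the regular translations of $G$. Performing this uniformly produces one model on which all the maps $g\cdot(-)$ become morphisms simultaneously, with the separatedness of the glued model being part of what must be verified.

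Next I would make the model projective and smooth. Working over $\C$, after replacing $M_0$ by its normalisation (on which $G$ still acts regularly) I would apply an equivariant completion theorem to realise $M_0$ as a $G$-invariant dense open subset of a complete, indeed projective, normal $G$-variety $\bar M_0$. Finally, functorial resolution of singularities in characteristic zero is automatically $G$-equivariant, so it yields a smooth projective variety $N$ carrying a regular $G$-action together with a $G$-equivariant birational morphism $N\to\bar M_0$. Composing $M\dashrightarrow M_0\hookrightarrow\bar M_0$ with the inverse of this morphism gives the desired birational map $f\colon M\dashrightarrow N$, and by construction $f$ conjugates $G\subset\Bir(M)$ into $\aut(N)$ with algebraic induced action.

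The main obstacle is the regularisation step. The completion and resolution are by now standard in characteristic zero, whereas turning a merely birational action into an everywhere-defined one — simultaneously controlling the indeterminacy loci of all the $g\cdot(-)$ and ensuring that the glued model is a separated variety — is the substantive content, and it is exactly this that Weil's construction accomplishes.
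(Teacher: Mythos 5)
The paper does not actually prove this statement; it is quoted as a known theorem, with the bracketed references (Weil's regularisation theorem and the equivariant completion/regularisation literature) standing in for the proof, and your outline reconstructs exactly that chain: Weil regularisation, normalisation, equivariant projective completion, equivariant resolution, so it is correct and in the same spirit as what the paper relies on. The one step worth spelling out is that Weil's regularised model need not be quasi-projective, so before asserting a \emph{projective} equivariant completion you should first replace it by a $G$-stable quasi-projective dense open subset (Sumihiro's theorem provides this for normal $G$-varieties), after which the equivariant projective embedding, and the blow-up-based functorial resolution, indeed keep the model projective and the $G$-action regular.
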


It can be shown (see for example, \cite{MR3092478}) that the sets $(\Cr_n)_{\leq d}\subset \Cr_n$ consisting of all birational transformations of degree $\leq d$ are closed with respect to the Zariski topology. So the closure of a subgroup of bounded degree in $\Cr_n$ is an algebraic subgroup and can therefore be regularized. We obtain:

\begin{corollary}
Let $G\subset\Cr_n$ be  a subgroup that is contained in some $(\Cr_n)_{\leq d}$, then there exists a smooth projective variety $N$ and a birational transformation $f\colon\p^n\dashrightarrow N$ such that $fGf^{-1}\subset\aut(N)$.
\end{corollary}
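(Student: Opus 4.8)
The plan is to combine the two results stated immediately before the corollary: the theorem of Weil (attributed to \cite{MR0074083}, \cite{MR0337963}, \cite{MR1389430}) that every algebraic subgroup of $\Bir(M)$ can be regularized on a smooth projective model, together with the cited fact that the sets $(\Cr_n)_{\leq d}$ are Zariski-closed in $\Cr_n$ and that closed subgroups of bounded degree are algebraic subgroups with a unique compatible algebraic group structure.

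First I would take the hypothesis that $G\subset\Cr_n$ is contained in $(\Cr_n)_{\leq d}$ for some fixed $d$. I would form its closure $\overline{G}$ in the Zariski topology on $\Cr_n$. Since $(\Cr_n)_{\leq d}$ is closed and $G\subset (\Cr_n)_{\leq d}$, we have $\overline{G}\subset (\Cr_n)_{\leq d}$, so $\overline{G}$ is still of bounded degree. The next point is that $\overline{G}$ is again a subgroup: because left and right translation $x\mapsto g\circ x$, $x\mapsto x\circ g$ and the inversion $\iota$ are all continuous (as recorded in the excerpt), the closure of a subgroup is a subgroup by the standard topological-group argument. Thus $\overline{G}$ is a closed subgroup of bounded degree, hence an algebraic subgroup of $\Cr_n=\Bir(\p^n)$ by the quoted characterization.

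Having identified $\overline{G}$ as an algebraic subgroup of $\Bir(\p^n)$, I would apply Weil's regularization theorem directly to $G':=\overline{G}$: there exists a smooth projective variety $N$ and a birational map $f\colon\p^n\dashrightarrow N$ conjugating $\overline{G}$ into $\aut(N)$ with algebraic action. Since $G\subset\overline{G}$, conjugation by $f$ also sends $G$ into $\aut(N)$, giving $fGf^{-1}\subset\aut(N)$, which is exactly the asserted conclusion.

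The argument is essentially a two-line deduction, so there is no serious obstacle; the only point requiring a little care is verifying that the closure of $G$ remains a subgroup and remains of bounded degree, both of which follow formally from the continuity statements and the closedness of $(\Cr_n)_{\leq d}$ already established in the excerpt. Everything substantive — that bounded closed subgroups are algebraic, and that algebraic subgroups regularize — is imported wholesale from the preceding theorem and its surrounding discussion, so the corollary is purely a matter of assembling these pieces.
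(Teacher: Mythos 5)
Your proposal is correct and follows exactly the paper's own route: the paper deduces the corollary from the closedness of $(\Cr_n)_{\leq d}$ by observing that the closure of a subgroup of bounded degree is a closed subgroup of bounded degree, hence an algebraic subgroup, which Weil's theorem then regularizes. The only detail you spell out beyond the paper is the verification that $\overline{G}$ is again a subgroup via continuity of translations and inversion, which is exactly the justification implicit in the paper's discussion of the Zariski topology.
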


The maximal algebraic subgroups of $\Cr_2$ have been classified together with the rational surfaces on which they act as automorphisms (\cite{enriques1893sui}, \cite{MR2504924}). In dimension 3, a classification for maximal connected algebraic subgroups exists:  \cite{MR683251}, \cite{MR803342}, \cite{MR676586}.

\subsection{Algebraic homomorphisms and continuous homomorphisms}
We defined a group homomorphism from $\Cr_n$ to $\Bir(M)$ to be algebraic if its restriction to $\PGL_{n+1}(\C)$ is a morphism. Note that this is a priori a weaker notion than being continuous with respect to the Zariski topology. It is not clear, whether algebraic homomorphisms are always continuous. However, for dimension 2 we have the following partial result, which will proved in Section \ref{proofalghom}:

\begin{proposition}\label{alghom}
Let $\Phi\colon\Cr_2\to\Bir(M)$ be a homomorphism of groups. The following are equivalent:
\begin{enumerate}
\item $\Phi$ is algebraic.
\item The restriction of $\Phi$ to any algebraic subgroup of $\Cr_2$ is algebraic.
\item The restriction of $\Phi$ to one positive dimensional algebraic subgroup of $\Cr_2$ is algebraic.
\end{enumerate}
\end{proposition}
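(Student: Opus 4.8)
The plan is to prove the cycle $(1)\Rightarrow(2)\Rightarrow(3)\Rightarrow(1)$. The implication $(2)\Rightarrow(3)$ is immediate, since $\Cr_2$ has positive-dimensional algebraic subgroups (for instance $\PGL_3(\C)=\aut(\p^2)$). The two substantial implications, $(1)\Rightarrow(2)$ and $(3)\Rightarrow(1)$, both rest on one technical device, the \emph{big-cell criterion}: if $G$ is a connected algebraic group, $\rho\colon G\to\Bir(M)$ an abstract homomorphism, and $H_1,\dots,H_k\subseteq G$ are closed subgroups such that each $\rho|_{H_i}$ is a morphism and the multiplication $H_1\times\cdots\times H_k\to G$ is dominant, then $\rho$ is a morphism. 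To establish it I would first use that compositions of families of birational transformations are again families, so that $(h_1,\dots,h_k)\mapsto\rho(h_1\cdots h_k)$ is a morphism $H_1\times\cdots\times H_k\to\Bir(M)$; then descend this morphism through the dominant multiplication map (in characteristic $0$ a rational action pulled back along a dominant cover descends to the base) to obtain a morphism on a dense open $V\subseteq G$; and finally extend from $V$ to all of $G$ using that the translations $x\mapsto\rho(g)\circ x$ are continuous and that finitely many translates of $V$ cover $G$. Since a homomorphism from a finite group is automatically a morphism, this reduces every statement below to the identity component.

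For $(1)\Rightarrow(2)$ I would first note that if $\Phi|_{\PGL_3}$ is a morphism, then so is $\Phi|_{gLg^{-1}}$ for every $\Cr_2$-conjugate of a closed $L\subseteq\PGL_3$, via $\Phi|_{gLg^{-1}}(\cdot)=\Phi(g)\circ\Phi|_L(g^{-1}\cdot g)\circ\Phi(g)^{-1}$ and continuity of translations. Let $G\subseteq\Cr_2$ be an algebraic subgroup; by the classification of connected algebraic subgroups of $\Cr_2$ (Section~\ref{algsubgroups}), after conjugation $G^0$ lies in $\aut(\p^2)=\PGL_3(\C)$, in $\aut(\p^1\times\p^1)^0=\PGL_2(\C)\times\PGL_2(\C)$, or in $\aut(\mathbb{F}_n)^0$. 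In each case I would exhibit a big-cell decomposition of the ambient model group into one-parameter subgroups that are each $\Cr_2$-conjugate into $\PGL_3$, and apply the criterion. The tori cause no trouble: $\p^2,\p^1\times\p^1,\mathbb{F}_n$ are toric and the birational maps between them are monomial, so every maximal torus is conjugate to $D_2\subset\PGL_3$. The unipotent directions use that a base translation $(x,y)\mapsto(x+t,y)$ and a fibre translation $(x,y)\mapsto(x,y+t)$ have degree $1$ and hence already lie in $\PGL_3$; the decisive point for $\aut(\mathbb{F}_n)^0$ is that its unipotent radical is the irreducible $\PGL_2$-module $\Sym^n$, so that, although a single de Jonqui\`eres direction $\C\cdot x^k$ need not be conjugate into $\PGL_3$, the radical is a \emph{product} of finitely many $\PGL_2$-conjugates of the degree-one fibre translation (the forms $\ell^n$, $\ell$ linear, span $\Sym^n$ since the rational normal curve is nondegenerate), to which the criterion applies.

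For $(3)\Rightarrow(1)$, let $H$ be a positive-dimensional algebraic subgroup with $\Phi|_H$ a morphism. Its identity component is linear (the automorphism group of a rational surface), hence contains a one-parameter subgroup $L\cong\mathbb{G}_a$ or $\mathbb{G}_m$ with $\Phi|_L$ a morphism, and $\Phi$ is then a morphism on every $\Cr_2$-conjugate of $L$. After regularizing $H$ (Weil's theorem) and running an equivariant minimal model program, $L$ becomes a one-parameter subgroup of $\aut(\p^2)$, $\aut(\p^1\times\p^1)^0$ or $\aut(\mathbb{F}_n)^0$; the representation-theoretic argument of $(1)\Rightarrow(2)$ (tori conjugate into $D_2$; irreducibility of $\Sym^n$ forcing $\Phi$-morphism-ness on the degree-one translation $U_{\mathrm{const}}\subset\PGL_3$) then shows that $\Phi$ is already a morphism on some $\mathbb{G}_a$ or $\mathbb{G}_m$ contained in $\PGL_3$. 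From this single one-parameter subgroup I would bootstrap to a three-dimensional $R\cong\PGL_2\subset\PGL_3$: if it is unipotent, two opposite root subgroups give a dominant product $U^+U^-U^+\to R$; if it is a torus, three generic conjugate tori give a dominant product $T_1T_2T_3\to R$ (their Cartan lines are in general position, so the multiplication map has full rank). The criterion yields $\Phi|_R$ a morphism, and $R$ contains \emph{both} a root subgroup and a one-dimensional torus of $\PGL_3$; spreading these by $\PGL_3$-conjugation and applying the criterion to the Bruhat decomposition $U^-D_2U^+$ gives $(1)$.

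The main obstacle I expect is the big-cell criterion itself, specifically the descent step: passing from the morphism $(h_1,\dots,h_k)\mapsto\rho(h_1\cdots h_k)$ on the dominating product to a genuine morphism on a dense open of $G$ requires the structural properties of the Zariski topology on $\Bir(M)$ from \cite{MR3092478} (families compose to families, rational actions descend along dominant covers) together with some care about rational sections of the multiplication map. A secondary difficulty is the bookkeeping in the $\aut(\mathbb{F}_n)^0$ case, where one must verify that both the reductive part and the unipotent radical are generated \emph{with dominant multiplication} by one-parameter subgroups conjugate into $\PGL_3$, and not merely generated set-theoretically by them.
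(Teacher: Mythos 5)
Your proposal takes a genuinely different route from the paper's proof. The paper rests on two external rigidity inputs: Proposition~\ref{onepar} (all one-parameter subgroups of $\Cr_2$ isomorphic to $\C$, resp.\ to $\C^*$, are conjugate), which lets it write any connected algebraic subgroup as a bounded-length product of one-parameter subgroups each conjugate into $\PGL_3(\C)$, and, for the semisimple case of $(3)\Rightarrow(1)$, Proposition~\ref{isopgl} (an abstract isomorphism between $\PGL_3(\C)$ and an algebraic group is algebraic up to a field automorphism), the field automorphism being killed via the curve $T$ of $D_1$-conjugates of a fixed translation. Moreover the paper's gluing device is not your big-cell criterion: it first uses bounded degree to conclude that the closure of the image is an honest algebraic subgroup, and then invokes the classical Lemma~\ref{grouplemma} for abstract homomorphisms between algebraic groups, thereby outsourcing all descent along multiplication maps to that lemma. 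You avoid Propositions~\ref{onepar} and~\ref{isopgl} entirely, replacing them by the Enriques--Blanc classification of connected algebraic subgroups (Section~\ref{algsubgroups}), the irreducibility of $\Sym^n$ (the orbit of any line spans, so products of $\Cr_2$-conjugates of any one-parameter subgroup of the unipotent radical of $\aut^0(\mathbb{F}_n)$ fill the whole radical -- a nice observation, and it works in both directions of the equivalence), and a $\Bir(M)$-valued strengthening of Lemma~\ref{grouplemma} that you must prove yourself. The trade is roughly even: the paper is shorter given its citations, while you replace the hard conjugacy theorem for $\mathbb{G}_a$-subgroups by representation theory, at the price of the descent step in your criterion, which is legitimate in characteristic zero (function-field descent along dominant maps, plus the structural facts on families from \cite{MR3092478}) but is real work that the paper never has to do.

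There is one concrete misstep: the torus half of your bootstrap in $(3)\Rightarrow(1)$. The one-dimensional torus $L$ extracted from $H$ is, after conjugation, a subtorus of $D_2$ with some primitive cocharacter $(a,b,c)$, $a+b+c=0$, over which you have no control. Unless $(a,b,c)$ is, up to permutation and sign, proportional to $(1,-1,0)$ or $(1,0,-1)$, no subgroup $R\simeq\PGL_2(\C)\subset\PGL_3(\C)$ contains any conjugate of $L$ as a maximal torus, so ``three generic conjugate tori $T_1T_2T_3\to R$'' cannot be arranged: the required $R$ does not exist. Two one-line repairs stay inside your framework: (i) first conjugate $L$ by a monomial transformation, since $\W_2\simeq\GL_2(\Z)$ acts transitively on primitive cocharacters of $D_2$ and $\Cr_2$-conjugation preserves morphism-ness of restrictions of $\Phi$; or (ii) skip $\PGL_2(\C)$ altogether and apply your criterion to $\PGL_3(\C)$ itself with eight generic conjugates $g_iLg_i^{-1}$: the adjoint orbit of any line spans the simple Lie algebra $\mathfrak{sl}_3$, so the multiplication map of these tori is a submersion at the identity, hence dominant onto $\PGL_3(\C)$. (Relatedly, in the unipotent half you should justify that the given $\mathbb{G}_a$ is a root subgroup of some $\PGL_2(\C)$; for the regular unipotent class this is Jacobson--Morozov, not a tautology.) With these repairs your argument is correct.
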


\subsection{One-parameter subgroups}
A one-parameter subgroup is a connected algebraic group of dimension 1. It is well known (see for example \cite{MR0396773}) that all one-parameter subgroups are isomorphic to either $\C$ or $\C^*$. The group $\C$ is unipotent, the group $\C^*$ semi-simple.

Proposition \ref{onepar} shows that, up to conjugation by birational maps, there exists only one birational action of $\C$ and only one of $\C^*$ on $\p^2$:

\begin{proposition}\label{onepar}
In $\Cr_2$ all one-parameter subgroups isomorphic to $\C$ are conjugate and all one-parameter subgroups isomorphic to $\C^*$ are conjugate.
\end{proposition}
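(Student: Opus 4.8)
The plan is to reduce both statements to a single normal form by means of an equivariant rational quotient. Since a one–parameter subgroup $G\subset\Cr_2$ is an algebraic subgroup, the regularization theorem of Weil stated above lets me conjugate $G$, by a birational map, into $\aut(N)$ for some smooth projective surface $N$; as $N$ is birational to $\p^2$ it is rational. All the further conjugations below take place inside $\Bir(N)\cong\Cr_2$, so it suffices to bring the regular faithful $G$-action on the rational surface $N$ into a fixed standard form, the same for every $G\cong\C$ (respectively every $G\cong\C^*$).

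First I would record the orbit structure. Because $G$ is connected of dimension $1$ and acts faithfully, a generic stabilizer is a proper closed subgroup of $G$, hence finite; for $G\cong\C$ it is then trivial and for $G\cong\C^*$ it is some $\mu_k$, so in both cases the generic orbit is one–dimensional and the action is non-trivial. By Rosenlicht's theorem the field of invariants $K:=\C(N)^G$ then has transcendence degree $\dim N-1=1$ over $\C$ and is the function field of a rational quotient $\pi\colon N\dashrightarrow B$ separating generic orbits. Since $\C\subset K\subset\C(N)\cong\C(x,y)$ is a subfield of transcendence degree one of a rational function field, Lüroth's theorem gives $K\cong\C(u)$; equivalently $B$ is a rational curve.

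For $G\cong\C^*$ I would then argue purely algebraically with the induced grading. The $\C^*$-action makes $\C(N)=\bigoplus_{n\in\Z}\C(N)_n$ a $\Z$-graded field, where $\C(N)_n=\{f:\sigma_t f=t^n f \text{ for all } t\}$ and $\C(N)_0=K$. The set of occurring weights is a subgroup of $\Z$, and faithfulness forces it to be all of $\Z$, so there exists $w\in\C(N)$ with $\sigma_t w=tw$. As every homogeneous $f$ of weight $n$ satisfies $f/w^n\in K$, I obtain $\C(N)=K(w)=\C(u,w)$ with the action in these coordinates being the standard scaling $t\cdot(u,w)=(u,tw)$. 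Hence every $\C^*$ is conjugate in $\Cr_2$ to this one fixed subgroup, and so all of them are mutually conjugate.

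For $G\cong\C$ the same scheme works, with the generic fibre of $\pi$ playing the role of the weight line, and this is the step I expect to be the main obstacle. Over $K$ the generic fibre is a smooth projective curve carrying a faithful $\C$-action; such a curve must have genus $0$, since the automorphism group of a curve of positive genus contains no copy of $\C$, and the unique $\C$-fixed point, being canonical, is Galois-invariant and hence $K$-rational, so the fibre is $\p^1_K$. Thus the action is conjugate by an element of $\PGL_2(K)$ to $v\mapsto v+c(t)$ fixing $\infty$, where $c\colon\C\to K$ is an additive morphism; using $\car\C=0$ this gives $c(t)=\lambda t$ with $\lambda\in K^{\ast}$, and replacing $v$ by $v/\lambda$ normalizes the action to $t\cdot(u,v)=(u,v+t)$ on $\C(N)=\C(u,v)$. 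The care needed here is that, unlike in the multiplicative case, the derivation induced by $\C$ on the whole function field is not locally nilpotent, so the slice must be produced on the generic fibre through its fixed point rather than naively on $\C(N)$; once this normal form is reached, all $\C$-subgroups are conjugate in $\Cr_2$ exactly as in the multiplicative case.
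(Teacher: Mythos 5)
Your route is genuinely different from the paper's: the paper gives no self-contained argument, deducing the $\C^*$ statement as a special case of Theorem \ref{poptori} and quoting the literature for the $\C$ statement, whereas you propose a direct proof via Weil regularization, a Rosenlicht quotient $\pi\colon N\dashrightarrow B$ over a rational base curve, and normalization of the action on the generic fibre. The half you feared most, $G\simeq\C$, is in fact essentially sound: after equivariant resolution (the indeterminacy locus of $\pi$ is $G$-invariant, so the action lifts), the generic fibre is a smooth, geometrically integral projective curve over $K=\C(N)^G$ ($K$ is algebraically closed in $\C(N)$ because $G$ is connected) carrying a faithful algebraic $\mathbb{G}_{a,K}$-action, which forces genus $0$; the unique geometric fixed point is $K$-rational, and the induced map into the stabilizer of $\infty$ in $\PGL_2(K)$ is $v\mapsto v+c(t)$ with $c(t)=\lambda t$, $\lambda\in K^*$ (note $c$ is linear because it is an algebraic homomorphism $\mathbb{G}_{a,K}\to\mathbb{G}_{a,K}$, not merely additive on points), so the action normalizes to $t\cdot(u,v)=(u,v+t)$ on $\C(N)=\C(u,v)$.

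The genuine gap is in the $\C^*$ half, in the step you treated as purely formal. The assertion that $\C(N)=\bigoplus_{n\in\Z}\C(N)_n$ is false: a function field is not a locally finite module under the action. For the action $t\cdot(x,y)=(x,ty)$, the function $1/(1+y)$ is carried to $1/(1+ty)$, which is not a Laurent polynomial in $t$, so $1/(1+y)$ is not a finite sum of semi-invariants. The claim is even self-refuting: granting it together with your weight-one element $w$, every weight space equals $Kw^n$, so the right-hand side would be the Laurent ring $K[w,w^{-1}]$, which is not a field. What your deduction of $\C(N)=K(w)$ actually requires is that $\C(N)$ be generated \emph{as a field} by semi-invariants, and that is a nontrivial input you have not supplied. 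It is true, but it rests on a linearization result such as Sumihiro's theorem: a normal variety with a torus action is covered by invariant affine opens $U$, the ring $\C[U]$ genuinely is graded, and $\C(N)=\operatorname{Frac}\C[U]$; with that in hand, your weight-subgroup and faithfulness arguments go through verbatim. Alternatively, and more in the spirit of your own proof, run the generic-fibre argument for $\C^*$ as well: the genus-$0$ generic fibre over $K\simeq\C(u)$ has a rational point by Tsen's theorem, hence is $\p^1_K$; the action then gives a split one-dimensional torus in $\PGL_2(K)$, conjugate over $K$ to the diagonal torus, and faithfulness pins the character down to $t^{\pm1}$, yielding the normal form $t\cdot(u,v)=(u,tv)$.
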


The first part of Proposition \ref{onepar} follows from results in \cite{MR3410471} and \cite{MR2215969} (see also  \cite{MR1474805}). The second part is a special case of Theorem \ref{poptori}. A detailed explanation of the proof can be found in \cite{longversion}.

\begin{theorem}[\cite{MR0200279}, \cite{MR3135700}]\label{poptori}
In $\Cr_n$ all tori of dimension $\geq n-2$ are conjugate to a subtorus of $D_n$. Moreover, two subtori of $D_n$ are conjugate in $\Cr_n$ to each other if and only if they are isomorphic.
\end{theorem}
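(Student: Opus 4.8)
The plan is to prove the first assertion by regularizing the torus action and then putting it into a normal form governed by the toric geometry of a general orbit, and to deduce the classification statement from the transitivity of the Weyl group $\W_n$ on sublattices. First I would let $T\subset\Cr_n$ be a torus of dimension $d\geq n-2$ and apply the regularization theorem of Weil quoted above: after conjugating by a birational map we may assume that $T$ acts faithfully and regularly on a smooth projective rational variety $N$ of dimension $n$. Because $T$ is a torus acting faithfully on the irreducible variety $N$, its principal isotropy group $H_0$ is the stabilizer of every point of a dense open subset; hence $H_0$ fixes a dense set, acts trivially on $N$, and is therefore trivial. In particular a general orbit is isomorphic to $T$ and has dimension $d$, so the rational quotient $\pi\colon N\dashrightarrow B$ with $\C(B)=\C(N)^T$ has base of dimension $k:=n-d\leq 2$, and its general fibre is a toric variety with dense orbit $\cong T$.

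Next I would trivialize this fibration over the function field $K:=\C(B)$. Decomposing $\C(N)=\bigoplus_{\chi}\C(N)_\chi$ into $T$-eigenspaces, the set of characters $\chi$ that occur is a subgroup $\Lambda\subseteq X^*(T)\cong\Z^d$, and the condition $\{t:\chi(t)=1\text{ for all }\chi\in\Lambda\}$ describes exactly the generic stabilizer, which we have shown to be trivial; hence $\Lambda=X^*(T)$. Choosing eigenfunctions $\chi_1,\dots,\chi_d$ for a basis of $X^*(T)$ yields algebraically independent elements with $\C(N)=K(\chi_1,\dots,\chi_d)$ on which $T$ acts diagonally. Thus $N$ is $T$-equivariantly birational to $B\times(\C^*)^d$, where $T$ acts as the full diagonal torus on the second factor and trivially on $B$.

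The crux is the rationality of $B$. Since $N$ is rational, $B\times(\C^*)^d$ is rational, so $B$ is stably rational; and because $\dim B=k\leq 2$, a stably rational curve or surface is in fact rational by Castelnuovo's criterion. This is precisely where the hypothesis $d\geq n-2$ enters: for $k\geq 3$ stably rational varieties need not be rational, as recalled in Example~\ref{stablyrational}, and the argument breaks down. Hence $B$ is birational to $\p^k$ and $N$ is birational to $\p^k\times\p^d\cong\p^n$, with $(\C^*)^k\times(\C^*)^d$ playing the role of the standard big torus, which is conjugate in $\Cr_n$ to $D_n$. Under this identification $T=\{1\}^k\times(\C^*)^d$ becomes a subtorus of $D_n$, proving the first statement; I expect this rationality step to be the main obstacle, since everything else is formal.

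Finally, for the classification: conjugate subtori are isomorphic as algebraic groups, which over $\C$ forces equal dimension. Conversely, two subtori of $D_n$ of the same dimension $d$ correspond to primitive rank-$d$ sublattices of the cocharacter lattice $\Z^n$, and by Smith normal form $\GL_n(\Z)$ acts transitively on these. Since $\GL_n(\Z)\cong\W_n\subset\Cr_n$, any two such subtori are already conjugate inside the Weyl group, which completes the proof.
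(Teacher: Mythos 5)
You should first be aware that the paper contains no proof of Theorem \ref{poptori}: it is quoted from the literature (the two references in the theorem's header, Bia{\l}ynicki-Birula and Popov), with details deferred to \cite{longversion}. So the comparison is with the standard proof in those sources, and your argument is essentially a reconstruction of it: Weil regularization, triviality of the generic stabilizer, the rational quotient $N\dashrightarrow B$, a $T$-equivariant birational trivialization $N\sim B\times(\C^*)^d$, the observation that $B$ is stably rational and hence rational because $\dim B\leq 2$ (this is indeed exactly where the hypothesis $d\geq n-2$ enters, and you are right that it is the crux), and, for the second assertion, the correspondence between subtori of $D_n$ and saturated sublattices of the cocharacter lattice $\Z^n$ together with the transitivity of $\GL_n(\Z)\simeq\W_n$ on such sublattices of a fixed rank. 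That architecture is correct, and the classification part is complete as you give it: in the framework of Section \ref{algsubgroups}, conjugation in $\Cr_n$ induces an isomorphism of algebraic groups, so conjugate subtori are isomorphic, while isomorphic subtori have equal dimension and are conjugated to one another inside $\W_n$.

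One step, however, is wrong as stated and must be replaced: the claim that $\C(N)$ decomposes as $\bigoplus_\chi\C(N)_\chi$ into $T$-eigenspaces. The $T$-action on the function field is not locally finite, so the field is not spanned by semi-invariants in this direct-sum sense; already for $\C^*$ acting on $\p^1$ by $t\cdot x=tx$, the translates $1/(1+tx)$, $t\in\C^*$, of a single rational function are linearly independent, so they generate an infinite-dimensional space admitting no eigenvector decomposition. The correct substitute, which delivers exactly the conclusion you want, is Rosenlicht's theorem combined with Hilbert 90: over a dense open subset of $B$ the fibres of $N\dashrightarrow B$ are single $T$-orbits, so (the generic stabilizer being trivial) the generic fibre is a torsor under the split torus $T_{\C(B)}$; since $H^1\bigl(\C(B),\mathbb{G}_m^{\,d}\bigr)=0$, this torsor is trivial, giving a $T$-equivariant isomorphism of the generic fibre with $T_{\C(B)}$ and hence $\C(N)=\C(B)(f_1,\dots,f_d)$ with $f_i$ semi-invariant of weights forming a basis of $X^*(T)$. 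This is only a repair of the justification, not of the statement being used, and with it your proof goes through.
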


The following Lemma is a classical result (see for example \cite{MR3228629}):
\begin{lemma}\label{grouplemma}
Let $G$ be a linear algebraic group and $U_{1},\dots, U_{n}$ be algebraic subgroups such that $U_1U_2\cdots U_n=G$. Let $H$ be a linear algebraic group and $\varphi\colon G\to H$ a homomorphism of abstract groups such that $\varphi|_{U_i}$ is a homomorphism of algebraic groups for all $i$. Then $\varphi$ is a homomorphism of algebraic groups.
\end{lemma}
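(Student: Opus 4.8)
The plan is to realize $\varphi$ through its graph, using that this graph coincides, as a set, with the image of a single morphism. Consider the product variety $U_1\times\cdots\times U_n$ together with the multiplication morphism
\[
m\colon U_1\times\cdots\times U_n\to G,\qquad m(u_1,\dots,u_n)=u_1u_2\cdots u_n,
\]
which is surjective precisely because $U_1U_2\cdots U_n=G$. On the same variety define
\[
\psi\colon U_1\times\cdots\times U_n\to H,\qquad \psi(u_1,\dots,u_n)=\varphi(u_1)\varphi(u_2)\cdots\varphi(u_n).
\]
Since each $\varphi|_{U_i}$ is a homomorphism of algebraic groups, hence a morphism, and since multiplication in $H$ is a morphism, $\psi$ is a morphism of varieties. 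Because $\varphi$ is a homomorphism of abstract groups we have $\varphi(u_1\cdots u_n)=\varphi(u_1)\cdots\varphi(u_n)$, that is, $\varphi\circ m=\psi$.

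Next I would consider the morphism $\Theta=(m,\psi)\colon U_1\times\cdots\times U_n\to G\times H$. By the identity $\varphi\circ m=\psi$ together with the surjectivity of $m$, its image is exactly the graph
\[
\Gamma_\varphi=\{(g,\varphi(g))\mid g\in G\}\subset G\times H.
\]
By Chevalley's theorem $\Gamma_\varphi$ is therefore a constructible subset of $G\times H$. On the other hand, since $\varphi$ is a homomorphism of abstract groups, $\Gamma_\varphi$ is a subgroup of $G\times H$. A constructible subgroup of a linear algebraic group is closed: it contains a dense open subset $V$ of its closure $\overline{\Gamma_\varphi}$ (which is again a subgroup), and for any $s\in\overline{\Gamma_\varphi}$ the two dense open sets $sV$ and $V$ meet, forcing $s\in V\cdot V^{-1}\subseteq\Gamma_\varphi$. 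Hence $\Gamma_\varphi$ is a closed subgroup, i.e. a linear algebraic group, and the two projections restrict to homomorphisms of algebraic groups $p\colon\Gamma_\varphi\to G$ and $q\colon\Gamma_\varphi\to H$.

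Finally I would observe that $p$ is injective, because $\Gamma_\varphi$ is the graph of a function, and surjective, because $\varphi$ is defined on all of $G$. Over $\C$ a bijective homomorphism of algebraic groups is an isomorphism: its differential at the identity is injective (in characteristic zero the kernel of the differential is the Lie algebra of $\ker p$, which is trivial), hence bijective since $\dim\Gamma_\varphi=\dim G$, so $p$ is \'etale, and being bijective in characteristic zero it is an isomorphism. Consequently $p^{-1}$ is a morphism and $\varphi=q\circ p^{-1}$ is a homomorphism of algebraic groups, as claimed. The only genuinely non-formal input, and the step I expect to be the main obstacle to phrase cleanly, is the passage from \emph{bijective homomorphism} to \emph{isomorphism}: this is where the hypothesis that the ground field is $\C$ (characteristic zero) enters, whereas in positive characteristic one would additionally need to verify separability of $p$.
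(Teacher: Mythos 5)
Your proof is correct, and it is essentially the classical argument; note that the paper itself does not prove this lemma at all (it is stated as a known result with a reference), so your write-up fills in what the paper delegates to the literature. The three pillars of your argument are all sound: (i) the image of $\Theta=(m,\psi)$ is exactly the graph $\Gamma_\varphi$ because $m$ is surjective and $\varphi\circ m=\psi$, and it is constructible by Chevalley; (ii) a constructible subgroup is closed --- your translation argument is the standard one, and it needs no irreducibility since any two dense open subsets of $\overline{\Gamma_\varphi}$ meet; (iii) in characteristic zero a bijective homomorphism of algebraic groups is an isomorphism, which you correctly identify as the only non-formal input (trivial abstract kernel gives trivial scheme-theoretic kernel by Cartier's theorem, hence injective differential; equal dimensions and smoothness then give \'etale, and a bijective \'etale morphism is an isomorphism). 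Two small points worth making explicit if this were to be written out fully: the closure of a subgroup is again a subgroup (you assert this parenthetically, correctly), and $\dim\Gamma_\varphi=\dim G$ holds because a bijective morphism of varieties preserves dimension, with smoothness of both groups guaranteeing that tangent-space dimensions agree with the dimensions of the varieties. Neither is a gap; both are standard facts, and your proof stands as a complete and correct justification of the lemma over $\C$.
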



\subsection{Algebraic and abstract group homomorphisms}\label{proofalghom}
Let $G$ and $H$ be algebraic groups that are isomorphic as abstract groups. The question whether $G$ and $H$ are also isomorphic as algebraic groups have been treated in detail in \cite{MR0316587}  (see also \cite{MR0310083} and \cite{deserti:tel-00125492}). We will use the following result:

\begin{proposition}\label{isopgl}
Let $G$ be an algebraic group that is isomorphic to $\PGL_n(\C)$ as an abstract group. Then $G$ is isomorphic to $\PGL_n(\C)$ as an algebraic group. Moreover, for every abstract  isomorphism 
\[
\rho\colon\PGL_n(\C)\to G
\] 
there exists an automorphism of fields $\tau\colon\C\to\C$ such that $\rho\circ\tau$ is an algebraic isomorphism.
\end{proposition}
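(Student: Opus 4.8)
The plan is to reduce the statement to the Borel--Tits rigidity theorem on abstract homomorphisms of simple algebraic groups, cited as \cite{MR0316587}; the real work before invoking it is to check, using only that $G$ is an algebraic group abstractly isomorphic to $\PGL_n(\C)$, that $G$ is in fact a connected adjoint simple \emph{linear} algebraic group over $\C$. For $n=1$ the group $\PGL_1(\C)$ is trivial and there is nothing to prove, so I assume $n\geq 2$. The structural fact I would use throughout is that $\PGL_n(\C)=\operatorname{PSL}_n(\C)$ (since $\C$ is algebraically closed, every class contains a representative of determinant one) is a nonabelian simple abstract group with trivial center. Consequently $G$ is a nonabelian abstract simple group, and every normal algebraic subgroup of $G$, being in particular a normal abstract subgroup, is either trivial or all of $G$.

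I would exploit this dichotomy four times. First, $G^0$ is a normal algebraic subgroup of finite index; as $G$ is infinite, $G^0$ is nontrivial, so $G^0=G$ and $G$ is connected. Second, by Chevalley's structure theorem the unique maximal connected normal affine subgroup $H$ of $G$ is nontrivial (otherwise $G$ would be an abelian variety, hence commutative, contradicting nonabelianness), so $H=G$ and $G$ is linear. Third, the radical $R(G)$ is a connected solvable normal algebraic subgroup which cannot equal $G$ (else $G$ would be solvable), hence is trivial, so $G$ is semisimple; and if $G$ had more than one almost-simple factor, one of them would be a proper nontrivial normal algebraic subgroup, so $G$ is almost simple. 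Fourth, the center $Z(G)$ is a normal algebraic subgroup that is not all of $G$, hence trivial, so $G$ is of adjoint type.

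At this point both $\PGL_n(\C)$ and $G$ are the groups of $\C$-points of connected adjoint simple algebraic groups over the algebraically closed field $\C$, and $\rho$ is an abstract isomorphism, so its image is Zariski dense. The Borel--Tits theorem then applies: it supplies a field homomorphism $\varphi\colon\C\to\C$ and an algebraic isomorphism $\beta\colon\PGL_n(\C)\to G$ (the twist ${}^{\varphi}\PGL_n$ being canonically $\PGL_n$ over the algebraically closed $\C$) such that $\rho=\beta\circ a_\varphi$, where $a_\varphi$ denotes the automorphism of $\PGL_n(\C)$ obtained by letting $\varphi$ act on matrix entries. Since $\rho$ is bijective, $\varphi$ is a field automorphism, and as we are in characteristic zero the bijective isogeny $\beta$ is an isomorphism of algebraic groups. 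Taking $\tau:=\varphi^{-1}$ yields $\rho\circ\tau=\beta$, an algebraic isomorphism $\PGL_n(\C)\xrightarrow{\sim}G$; in particular $G$ is isomorphic to $\PGL_n(\C)$ as an algebraic group.

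The main obstacle is entirely concentrated in the Borel--Tits step: the four reductions above are routine structure theory, whereas the assertion that an abstract isomorphism between the $\C$-points of two simple algebraic groups is the composition of a field automorphism with an algebraic isomorphism is the substantial input, and I would cite \cite{MR0316587} (together with \cite{MR0310083} and \cite{deserti:tel-00125492}) rather than reprove it. The only point requiring care is that the cited form of the theorem applies here: $\PGL_n$ is split, hence isotropic, over $\C$ for $n\geq 2$, which is precisely the isotropy hypothesis needed, and the target $G$ has already been identified as a linear adjoint simple group, so no degenerate cases remain.
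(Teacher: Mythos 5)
Your proposal is correct and follows essentially the same route as the paper, which states Proposition \ref{isopgl} without proof as a consequence of the Borel--Tits rigidity theorem and the other cited works (\cite{MR0316587}, \cite{MR0310083}, \cite{deserti:tel-00125492}). The preliminary reductions you carry out (using abstract simplicity of $\PGL_n(\C)$ to show $G$ is connected, linear, semisimple, almost simple and adjoint, so that Borel--Tits applies) are exactly the details the paper leaves implicit, and they are sound.
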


\begin{remark}
It is well known that the automorphisms of $\PGL_n(\C)$ as an algebraic group are compositions of inner automorphisms and the automorphism
\[
\alpha\colon\PGL_n(\C)\to\PGL_n(\C), \hspace{2mm} g\mapsto {}^tg^{-1}.
\]
\end{remark}

\begin{proof}[Proof of Proposition $\ref{alghom}$]
We first show how (1) implies (2). Let $G$ be an algebraic subgroup of $\Cr_2$. We can assume that $G$ is connected. 
There exist one parameter subgroups $U_1,\dots, U_k\subset G$ such that $U_1\cdots U_k=G$ and there exists a constant $C$ such that every element in $G$ can be written as the product of at most $C$ elements of $U_1\cup U_2\cup\dots\cup U_n$. Since, by Proposition \ref{onepar}, the group $U_i$ is conjugate to a one parameter subgroup of $\PGL_3(\C)$ for all $i$, we obtain that the restriction of $\varphi$ to any of the $U_i$ is an algebraic homomorphism of groups and that $\varphi(G)\subset\Cr_n$ is of bounded degree. Then $\overline{\varphi(G)}\subset\Cr_n$ is an algebraic group. We can now apply Lemma ~\ref{grouplemma} and conclude that the restriction of $\varphi$ to $G$ is a homomorphism of algebraic groups.

Statement (3) follows immediately from statement (2), so it only remains to prove that (3) implies (1).
Let $\varphi\colon\Cr_2\to\Bir(M)$ be a homomorphism of abstract groups and let $G\subset\Cr_2$ be a positive dimensional algebraic subgroup such that the restriction of $\varphi$ to $G$ is a morphism.  Since $G$ is infinite, it contains a one parameter subgroup $U\subset G$. 

Let $U_1,\dots, U_n\subset\PGL_3(\C)$ be unipotent one parameter subgroups such that $U_1\cdots U_n=\PGL_3(\C)$ and $C$ a constant such that every element in $\PGL_3(\C)$ can be written as the product of at most $C$ elements of $U_1\cup U_2\cup\dots\cup U_n$. If $U$ is unipotent, all the subgroups $U_i$ are conjugate to $U$. Hence the restriction of $\varphi$ to $U_i$ is a morphism for all $i$.  The image $\varphi(\PGL_3(\C))\subset\Cr_n$ is of bounded degree, so $\overline{\varphi(\PGL_3(\C)})\subset\Cr_n$ is an algebraic group and with Lemma \ref{grouplemma} it follows that the restriction of $\varphi$ to $\PGL_3(\C)$ is a morphism.

Denote by $D_1\subset\PGL_3(\C)$ the subgroup given by elements of the form $[cx_0: x_1: x_2]$, $c\in\C^*$ and by $T\subset\PGL_3(\C)$ the subgroup of all elements of the form $[x_0:x_1+cx_0:x_2]$, $c\in\C$; we have $D_1\simeq\C^*$ and $T\simeq \C$.
If $U$ is semi-simple, it is, again by Proposition \ref{onepar}, conjugate to $D_1$, hence the restriction of $\varphi$ to $D_1$ is a morphism well. Note that 
\[
T=\{[x_0:x_1+cx_0:x_2]\mid c\in\C\}=\{dgd^{-1}\mid d\in D_1\}\cup \{\id\}
\]
where $g=[x_0:x_1+x_0:x_2]$. We obtain that $\varphi(T)$ is of bounded degree and contained in the algebraic group $\overline{\varphi(T)}\subset\Cr_n$. As $\varphi(T)$ consists of two $\varphi(D_1)$-orbits, it is constructible and therefore closed. We obtain that the images of all unipotent subgroups of $\Cr_2$ under $\varphi$ are algebraic subgroups.The map 
$\varphi(U_1)\times \cdots\times \varphi(U_n)\to\Cr_n$
is a morphism, so its image is a constructible set and therefore closed since it is a group. Hence $
\varphi(\PGL_3(\C))=\varphi(U_1)\cdots\varphi(U_n)$
is an algebraic subgroup. By Proposition \ref{isopgl} it is isomorphic as an algebraic group to $\PGL_3(\C)$ and there exists an automorphism of fields $\tau\colon\C\to\C$ such that $\varphi\circ\tau\colon \PGL_3(\C)\to\PGL_3(\C)$ is an isomorphism of algebraic groups. But since the restriction of $\varphi$ to $T$ is already an algebraic homomorphism, it follows that $\tau$ is the identity.
\end{proof}

\begin{remark}
Proposition  \ref{alghom} shows in particular that algebraic homomorphisms  $\Psi\colon\Cr_2\to\Bir(M)$ send algebraic elements to algebraic elements. This result follows also directly from the fact that a birational transformation $f\in\Cr_2$  of degree $d$ can be written as the product of at most $4d$ linear maps and $4d$ times the standard quadratic involution $\sigma$ (see for example \cite{MR1874328}); we therefore obtain that the sequence $\{\deg(\Phi(f)^n\}$ is bounded if $\{\deg(f^n)\}$ is bounded. 
 \end{remark}


\section{An example by Gizatullin}\label{gizatullinexample}

\subsection{Projective representations of the projective linear group}\label{representation}
The results from representation theory of linear algebraic groups that we use in this section can be found, for example, in \cite{MR1153249}, \cite{MR2265844}. 

\begin{proposition}\label{projrep}
There is a bijection between homomorphisms of algebraic groups from $\SL_n(\C)$ to $\SL_m(\C)$ such that the image of the center is contained in the center and homomorphisms of algebraic groups from $\PGL_n(\C)$ to $\PGL_m(\C)$.
\end{proposition}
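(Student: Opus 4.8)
The plan is to exhibit the two natural maps between these sets of homomorphisms and to check that they are mutually inverse, the whole argument resting on the fact that $\SL_n(\C)$ is simply connected. Write $Z_n\subset\SL_n(\C)$ and $Z_m\subset\SL_m(\C)$ for the centers. Over $\C$ we identify $\PGL_n(\C)$ with $\SL_n(\C)/Z_n=\operatorname{PSL}_n(\C)$, so that we have central isogenies
\[
\pi_n\colon\SL_n(\C)\to\PGL_n(\C),\qquad \pi_m\colon\SL_m(\C)\to\PGL_m(\C),
\]
with $Z_n=\ker\pi_n$ and $Z_m=\ker\pi_m$ finite.

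First I would construct the forward map by descent. Given an algebraic homomorphism $f\colon\SL_n(\C)\to\SL_m(\C)$ with $f(Z_n)\subseteq Z_m$, the composition $\pi_m\circ f$ annihilates $Z_n=\ker\pi_n$, hence is constant on the fibers of the geometric quotient $\pi_n$; it therefore descends to a morphism $\bar f\colon\PGL_n(\C)\to\PGL_m(\C)$, uniquely determined by $\bar f\circ\pi_n=\pi_m\circ f$, and this $\bar f$ is a homomorphism of algebraic groups because $\pi_n$ is a faithfully flat quotient morphism. This defines a map $\Theta\colon f\mapsto\bar f$.

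For the inverse map I would lift. Given an algebraic homomorphism $\bar f\colon\PGL_n(\C)\to\PGL_m(\C)$, consider $\bar f\circ\pi_n\colon\SL_n(\C)\to\PGL_m(\C)$. Since $\SL_n(\C)$ is simply connected and $\pi_m$ is a central isogeny, this morphism lifts to an algebraic homomorphism $f\colon\SL_n(\C)\to\SL_m(\C)$ with $\pi_m\circ f=\bar f\circ\pi_n$; the lift is unique because any two lifts differ by a morphism $\SL_n(\C)\to Z_m$, which is trivial as $\SL_n(\C)$ is connected and $Z_m$ is finite. Moreover $\pi_m(f(Z_n))=\bar f(\pi_n(Z_n))=\{e\}$, whence $f(Z_n)\subseteq\ker\pi_m=Z_m$, so $f$ lies in the left-hand set. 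This defines a map $\Lambda\colon\bar f\mapsto f$.

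Finally I would check that $\Theta$ and $\Lambda$ are mutually inverse, which is then formal. Starting from $\bar f$, the lift $f=\Lambda(\bar f)$ satisfies $\pi_m\circ f=\bar f\circ\pi_n$, and since $\pi_n$ is surjective the descent $\Theta(f)$ coincides with $\bar f$. Conversely, starting from $f$ with $\bar f=\Theta(f)$, the defining relation $\pi_m\circ f=\bar f\circ\pi_n$ exhibits $f$ as a lift of $\bar f\circ\pi_n$ through $\pi_m$, so the uniqueness of lifts gives $\Lambda(\bar f)=f$. The main obstacle is the lifting step, where one needs both the simple connectedness of $\SL_n(\C)$ and the universal property that a homomorphism out of a simply connected group lifts uniquely through a central isogeny, as well as the fact that the resulting analytic lift is again a morphism of algebraic groups; this is precisely the structure- and representation-theoretic input provided by the references cited above, which I would invoke rather than reprove.
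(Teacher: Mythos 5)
Your proof is correct. Note that the paper itself gives no proof of this proposition: it simply cites standard references on the representation theory of linear algebraic groups, so there is no argument of the author's to compare against. Your descent-and-lifting argument through the central isogenies $\pi_n\colon\SL_n(\C)\to\PGL_n(\C)$ and $\pi_m\colon\SL_m(\C)\to\PGL_m(\C)$ is exactly the standard proof those references supply, and you correctly isolate the two nontrivial inputs -- the lifting property of the simply connected group $\SL_n(\C)$ through a central isogeny, and the algebraicity of the resulting lift -- and flag them as citations rather than pretending they are formal; the remaining verifications (uniqueness of lifts via connectedness of $\SL_n(\C)$ and finiteness of $Z_m$, preservation of centers, and the mutual inversion of $\Theta$ and $\Lambda$) are all sound as written.
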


From Propostion \ref{projrep} and some elementary representation theory of $\SL_3(\C)$ it follows that $n=6$ is the smallest number such that there exist non-trivial and non-standard homomorphisms of algebraic groups from $\PGL_3(\C)$ to $\PGL_n(\C)$. In fact, up to automorphisms of $\PGL_3(\C)$ there are exactly two non-trivial representations from $\PGL_3(\C)$ to $\PGL_6(\C)$. 

The first one is reducible. Let $\psi'\colon\GL_3\to\GL_6$ be the linear representation given by the diagonal action on $\C^3\times\C^3$; we denote by $\psi\colon\PGL_3(\C)\to\PGL_6(\C)$ its projectivisation. 

The second one is given by the action of $\PGL_3(\C)$ on the space of conics. The latter one can be parametrized by the space $\p M_3$ of symmetric $3\times 3$-matrices up to scalar multiple and is isomorphic to $\p^5$. Let $g\in\PGL_3(\C)$, we define $\varphi(g)\in\PGL_6(\C)$ by $(a_{ij})\mapsto g(a_{ij}) ({}^tg).$

In this section we identify the space of conics with $\p^5$ in the following way:
\[
(a_{ij})\mapsto [a_{00}:a_{11}:a_{22}:a_{12}:a_{02}:a_{01}]
\]
In other words, the conic $C$ given by the zeroes of the equation
\[
F=a_{00}X^2+a_{11}Y^2+a_{22}Z^2+2a_{12}YZ+2a_{02}XZ+2a_{01}XY
\]
is identified with the point 
$ \left[a_{00}:a_{11}:a_{22}:a_{12}:a_{02}:a_{01}\right]\in\p^5.$

Observe that with our definition, $\varphi(g)$ sends the conic $C$ to the conic given by the zero set of the polynomial $F\circ({}^tg)$.

Let 
\[
\alpha\colon \PGL_3(\C)\to\PGL_3(\C)
\] 
be the algebraic automorphism $g\mapsto ({}^tg)^{-1}$. Then $\varphi(\alpha(g))$ maps the conic $C$ to $g(C)$, which is the conic given by the zero set of the polynomial $F\circ g^{-1}$. Accordingly, $\varphi(\alpha(g))\in\PGL_6(\C)$ maps the matrix $(a_{ij})\in M_3$ to $({}^tg)^{-1}(a_{ij})g^{-1}$.

The action of $\PGL_3(\C)$ on $\p^5$ induced by $\varphi$ has exactly three orbits that are characterized by the rank of the corresponding symmetric matrix in $M_3$. Geometrically they correspond to the sets of smooth conics, pairs of distinct lines and double lines. The set of double lines is a surface isomorphic to $\p^2$ and called the {\it Veronese surface}; we denote it by $V$. The set of singular conics $S$ is the secant variety of $V$ and has dimension $4$.

To describe the $\PGL_3(\C)$-orbits with respect to the action induced by $\psi$, consider a point $p=[x_0:x_1:x_2:x_3:x_4:x_5]\in\p^5$. Then $p$ can either be mapped by an element of $\psi(\PGL_3(\C))$ to a point of the form $[a:0:0:b:0:0]$, where $[a:b]\in\p^1$, or to the point $[1:0:0:0:0:1]$ and these points are all in different $\psi(\PGL_3(\C))$-orbits. The stabilizer of $[1:0:0:0:0:1]$ in $\psi(\PGL_3(\C))$ is the subgroup of matrices of the form
\[
\left[\begin{array}{cc}
g&0 \\
0&g
\end{array}\right], \text{ where $g\in\PGL_3(\C)$ has the form} \left[\begin{array}{ccc}
1&a&0 \\
0&b&0\\
0&c&1
\end{array}\right].
\]
Therefore, the orbit of $[1:0:0:0:0:1]$ under $\psi(\PGL_3(\C))$ has dimension $5$. The orbit of a point of the form $[a:0:0:b:0:0]$, on the other hand, has dimension 2. So we have a family parametrized by $\p^1$ of orbits of dimension 2 and one orbit of dimension $5$. In particular, there is no $\psi(\PGL_3(\C))$-invariant subset of dimension ~4.

The following observation is easy but useful. We leave its proof to the reader.

\begin{lemma}\label{exci}
Let $X$ and $Y$ be two projective varieties with biregular actions of a group $G$ and let $f\colon X\dashrightarrow Y$ be a $G$-equivariant rational map. Then the indeterminacy locus $I_f\subset X$ and the exceptional divisor $\exc(f)\subset X$ are $G$-invariant sets.
\end{lemma}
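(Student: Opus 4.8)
The plan is to reduce everything to the elementary observation that indeterminacy loci and exceptional divisors transform in the obvious way under pre- and post-composition by \emph{isomorphisms}, and that $G$-equivariance turns $f$ into a fixed point of such operations. Write $\alpha_g\in\aut(X)$ and $\beta_g\in\aut(Y)$ for the biregular automorphisms through which $g\in G$ acts; the hypothesis that the actions are biregular is exactly what guarantees that $\alpha_g$ and $\beta_g$ (and their inverses) are defined everywhere. By definition of $G$-equivariance we have $f\circ\alpha_g=\beta_g\circ f$ as rational maps, which I rewrite as the self-conjugacy relation
\[
f=\beta_g\circ f\circ\alpha_g^{-1}\qquad\text{for every }g\in G.
\]

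First I would record the relevant functoriality. For an arbitrary rational map $h\colon X\dashrightarrow Y$ and isomorphisms $u\colon X\to X$ and $v\colon Y\to Y$, the map $v\circ h\circ u$ is defined at a point $x$ if and only if $h$ is defined at $u(x)$, because $u$ and $v$ are biregular and hence neither create nor destroy indeterminacy. This gives $I_{v\circ h\circ u}=u^{-1}(I_h)$. Likewise, for a prime divisor $D\subset X$ one has $\overline{(v\circ h\circ u)(u^{-1}(D))}=v(\overline{h(D)})$, and since $u$ and $v$ preserve dimension, $D$ is contracted by $h$ if and only if $u^{-1}(D)$ is contracted by $v\circ h\circ u$; summing over the contracted prime divisors yields $\exc(v\circ h\circ u)=u^{-1}(\exc(h))$.

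Then I would apply this with $h=f$, $u=\alpha_g^{-1}$ and $v=\beta_g$. The functoriality statement gives $I_{\beta_g\circ f\circ\alpha_g^{-1}}=\alpha_g(I_f)$ and $\exc(\beta_g\circ f\circ\alpha_g^{-1})=\alpha_g(\exc(f))$. By the self-conjugacy relation above the left-hand sides are simply $I_f$ and $\exc(f)$, so $\alpha_g(I_f)=I_f$ and $\alpha_g(\exc(f))=\exc(f)$ for every $g\in G$; that is, both sets are $G$-invariant, as claimed.

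As the authors already remark, there is no serious obstacle here. The only points that require care are bookkeeping: getting the direction of the inverse right in $I_{v\circ h\circ u}=u^{-1}(I_h)$, and genuinely using the biregularity of the $G$-actions (rather than mere birationality), since the argument would break down if $\alpha_g$ or $\beta_g$ could themselves introduce indeterminacy or contract divisors.
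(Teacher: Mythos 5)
Your proof is correct. The paper does not actually supply an argument for this lemma (it states ``We leave its proof to the reader''), and your conjugation argument --- rewriting equivariance as $f=\beta_g\circ f\circ\alpha_g^{-1}$ and using that pre-/post-composition with isomorphisms transports $I_f$ and $\exc(f)$ by $\alpha_g$ --- is exactly the standard reasoning the author intends, with the key point (biregularity of the $G$-actions, not mere birationality) correctly identified.
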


Note that Lemma \ref{exci} implies in particular that all equivariant rational maps with respect to actions without orbits of codimension $\geq 2$ are morphisms.

\begin{lemma}\label{birstab}
Let $M$ and $M'$ be irreducible complex projective varieties such that $M\times\p^n$ et $M'\times\p^n$ are birationally equivalent. Then the standard embeddings 
\[
\Psi\colon\PGL_{n+1}(\C)\to\Bir(\p^n\times M) \text{ and } \Psi'\colon\PGL_{n+1}(\C)\to\Bir(\p^n\times M')
\]
are conjugate if and only if $M$ and $M'$ are birationally equivalent.
\end{lemma}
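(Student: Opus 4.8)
The plan is to prove the two implications separately, with the ``only if'' direction carrying all the content.

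First I would dispose of the ``if'' direction. If $\beta\colon M\dashrightarrow M'$ is a birational map, then $A:=\id_{\p^n}\times\beta\colon\p^n\times M\dashrightarrow\p^n\times M'$ is birational, and a direct computation gives $A\circ\Psi(g)\circ A^{-1}=\Psi'(g)$ for every $g\in\PGL_{n+1}(\C)$, since $A$ is the identity on the first factor, which is exactly the factor on which $\PGL_{n+1}(\C)$ acts. Hence $\Psi$ and $\Psi'$ are conjugate.

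For the ``only if'' direction the key idea is to recover $M$, up to birational equivalence, from the action $\Psi$ in an intrinsic way, namely as the rational quotient. Concretely, I would show that the field of $\Psi(\PGL_{n+1}(\C))$-invariant rational functions on $\p^n\times M$ equals $\C(M)$, embedded via pullback along the second projection. The mechanism is that $\PGL_{n+1}(\C)$ acts transitively on $\p^n$: for a general fixed $m\in M$ the restriction of an invariant function to the orbit $\p^n\times\{m\}$ is a $\PGL_{n+1}(\C)$-invariant rational function on $\p^n$, hence constant, and letting $m$ vary shows the invariant function is pulled back from $M$. The same computation yields $\C(\p^n\times M')^{\Psi'(\PGL_{n+1}(\C))}=\C(M')$.

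It then remains to check that a conjugating birational map $A\colon\p^n\times M\dashrightarrow\p^n\times M'$ carries one invariant field isomorphically onto the other. The map $A$ induces a field isomorphism $A^*\colon\C(\p^n\times M')\to\C(\p^n\times M)$, $f\mapsto f\circ A$, and the conjugacy relation $A\circ\Psi(g)\circ A^{-1}=\Psi'(g)$ is equivalent to $\Psi'(g)^{-1}\circ A=A\circ\Psi(g)^{-1}$, which is precisely the statement that $A^*$ intertwines the two $\PGL_{n+1}(\C)$-actions on the function fields. Consequently $A^*$ maps invariants to invariants and restricts to a $\C$-algebra isomorphism $\C(M')\xrightarrow{\ \sim\ }\C(M)$, so $M$ and $M'$ are birationally equivalent.

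The main obstacle I anticipate is the invariant-field computation: one must argue cleanly that a $\Psi(\PGL_{n+1}(\C))$-invariant rational function on the product depends only on the second factor. I expect to justify this by specializing the second coordinate to a general point and invoking transitivity of the linear action on $\p^n$, so that $\p^n$ carries no non-constant invariant rational functions. By contrast, the intertwining property of $A^*$ is a purely formal consequence of the definition of conjugacy and should require no geometry.
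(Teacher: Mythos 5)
Your proof is correct, but it takes a genuinely different route from the paper's. The paper identifies $M$ inside $\p^n\times M$ \emph{geometrically}: the fiber $F=[1:1:\dots:1]\times M$ is exactly the fixed-point locus of the finite subgroup $\Psi(\s_{n+1})$ of coordinate permutations (for $n\geq 2$ the standard representation of $\s_{n+1}$ is irreducible, so $[1:\dots:1]$ is its unique fixed point in $\p^n$), and likewise $F'\simeq M'$ for $\Psi'$; a conjugating map $A$ must then send $F$ birationally onto $F'$, where Lemma \ref{exci} is invoked to guarantee no fiber lies in the exceptional locus of $A$, so the strict transform of $F$ is $F'$ and one gets an explicit birational map $M\dashrightarrow M'$. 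You instead identify $\C(M)$ inside $\C(\p^n\times M)$ \emph{algebraically} as the field of $\Psi(\PGL_{n+1}(\C))$-invariant rational functions (the rational quotient), using transitivity of the linear action on $\p^n$, and observe that $A^*$ intertwines the actions on function fields and hence restricts to an isomorphism of the invariant subfields. Your approach is more intrinsic: it needs no finite subgroup with isolated fixed locus and no discussion of exceptional loci, and it matches the paper's own remark that reducibility corresponds to invariant subfields; the paper's approach buys a concrete geometric map via strict transform. One point to tighten in your write-up: when you restrict an invariant function to a general fiber $\p^n\times\{m\}$, the locus where $f\circ\Psi(g)=f$ holds depends on $g$, and $\PGL_{n+1}(\C)$ is uncountable, so "general $m$" must be handled by viewing invariance as an identity of rational functions on $\PGL_{n+1}(\C)\times\p^n\times M$ (it vanishes on every fiber over the group, hence identically) before specializing $m$; with that standard precaution your invariant-field computation goes through.
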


\begin{proof}
If $M$ and $M'$ are birationally equivalent it follows directly that $\Psi$ and $\Psi'$ are conjugate.
On the other hand, assume that there exists a birational map  
$A\colon \p^n\times M\dashrightarrow \p^n\times M'$ 
that conjugates $\Psi$ to $\Psi'$, i.e.
$A\circ\Psi(g)=\Psi'(g)\circ A$ 
for all $g\in\PGL_{n+1}(\C)$. The images $\Psi(\PGL_{n+1}(\C))$ and $\Psi'(\PGL_{n+1}(\C))$ permute the fibers $\{p\}\times M$, $p\in\p^n$ and $\{p\}\times M'$, $p\in\p^n$ respectively. By Lemma \ref{exci}, no fiber is fully contained in the exceptional locus of $A$.

The fiber 
\[
F:=[1:1:\dots:1]\times M\subset \p^n\times M
\] 
consists of all fixed points of the image of the subgroup of coordinate permutations $\Psi(\s_{n+1})$ and it is isomorphic to $M$. Correspondingly, the fiber
\[
F':=[1:1:\dots:1]\times M'\subset \p^n\times M'
\] 
consists of all fixed points of $\Psi'(\s_{n+1})$ and is isomorphic to $M'$. Hence the strict transform of $F$ under $A$ is $F'$ and we obtain that $M$ and $M'$ are birationally equivalent.
\end{proof}

\begin{proposition}\label{notconj}
Let $\varphi, \psi\colon\PGL_3(\C)\to\PGL_6(\C)$ be the homomorphisms defined in Section $\ref{representation}$. The subgroups $\varphi(\PGL_3(\C))$ and $\psi(\PGL_3(\C))$ are not conjugate in $\Cr_5$.
\end{proposition}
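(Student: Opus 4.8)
The plan is to assume, for contradiction, that there is a birational map $A\in\Cr_5$ conjugating $\varphi(\PGL_3(\C))$ onto $\psi(\PGL_3(\C))$, and to derive a contradiction from the different orbit structures of the two actions recorded in Section \ref{representation}. Since conjugation by $A$ carries one subgroup isomorphically onto the other, there is an automorphism $\theta$ of $\PGL_3(\C)$ with $A\circ\varphi(g)\circ A^{-1}=\psi(\theta(g))$ for all $g$; equivalently, $A\colon\p^5\dashrightarrow\p^5$ is equivariant from the $\varphi$-action to the $\psi$-action (the latter reparametrised by $\theta$, which does not change its orbits). Both $\varphi$ and $\psi$ act biregularly on $\p^5$, so Lemma \ref{exci} applies to $A$ and to $A^{-1}$.

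First I would use Lemma \ref{exci} to constrain $A$. The exceptional divisor $\exc(A^{-1})\subset\p^5$ is invariant under the $\psi$-action. But we computed that the $\psi$-action has no invariant subset of dimension $4$, whereas $\exc(A^{-1})$, if non-empty, is a hypersurface and hence of pure dimension $4$. Therefore $\exc(A^{-1})=\emptyset$, i.e. the map $A^{-1}$ contracts no hypersurface.

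The crux, and the step I expect to be the main obstacle, is to upgrade this to the conclusion that $A$ is linear. Here I would invoke the standard fact that a birational self-map of $\p^n$ that contracts no hypersurface is an automorphism, i.e. lies in $\PGL_{n+1}(\C)$: a map of degree $d\ge 2$ necessarily contracts a component of its Jacobian hypersurface, so an empty exceptional divisor forces degree one (equivalently, a pseudo-automorphism of the Picard-rank-one variety $\p^n$ is biregular). Applying this to $A^{-1}$ gives $A^{-1}\in\PGL_6(\C)$, hence $A\in\PGL_6(\C)$.

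It then remains to reach a contradiction at the linear level, which is easy. As an automorphism of $\p^5$, the map $A$ carries each $\varphi$-orbit onto a $\psi$-orbit bijectively, and therefore preserves orbit dimensions. However, the $\varphi$-action has an orbit of dimension $4$ (the pairs of distinct lines, that is $S\setminus V$), while by the orbit computation the $\psi$-action has orbits only of dimensions $2$ and $5$. This is the desired contradiction, so no such $A$ exists and $\varphi(\PGL_3(\C))$ and $\psi(\PGL_3(\C))$ are not conjugate in $\Cr_5$. Alternatively, the final contradiction can be phrased by observing that a linear conjugacy would make the irreducible representation $\varphi$ equivalent to the reducible representation $\psi\circ\theta$.
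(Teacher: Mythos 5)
Your proof is correct and takes essentially the same route as the paper: apply Lemma \ref{exci} together with the fact that the $\psi$-action admits no invariant subset of dimension $4$ to force the conjugating map to be an automorphism of $\p^5$, then contradict this using the mismatch of orbit dimensions ($\varphi$ has a $4$-dimensional orbit, $\psi$ does not). The only difference is that you spell out steps the paper leaves implicit — the reparametrisation by an automorphism $\theta$ of $\PGL_3(\C)$, and the Jacobian argument showing that a birational self-map of $\p^5$ contracting no hypersurface is linear.
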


\begin{proof}
Assume that there is an element $f\in\Cr_5$ conjugating $\varphi(\PGL_3(\C))$ to $\psi(\PGL_3(\C))$. Note that $\p^5$ has no $\psi(\PGL_3(\C))$-invariant subset of dimension 4. Hence, by Lemma \ref{exci}, $f$ must be a birational morphism and therefore an automorphism. But this isn't possible since the action of $\varphi(\PGL_3(\C))$ has an orbit of dimension 4 and the action of $\psi(\PGL_3(\C))$ does not.
\end{proof}

\subsection{A rational action on the space of plane conics}
Our goal is to extend the group homomorphism 
$\varphi\colon\PGL_3(\C)\to\PGL_6(\C)$
 to a group homomorphism 
 \[
 \Phi\colon\Cr_2\to\Cr_5.
 \]

A first naive idea is to check whether the map $\Psi\colon\{\PGL_3(\C), \sigma\}\to\Cr_5$ defined by $\Psi(g)=\varphi(g)$ for $g\in\PGL_3(\C)$ and 
$\Psi(\sigma)=[x_0^{-1}:x_2^{-1}:\dots:x_5^{-1}]$ extends to a group homomorphism $\Cr_2\to\Cr_5$. However, $\Psi(\sigma)$ and $\Psi(h)$ don't satisfy relation $(3)$ of Lemma \ref{relations}. Let $h=[Z-Z:Z-Y:Z]\in\Cr_2$, then 
\[
([x_0^{-1}:x_1^{-1}:\dots:x_5^{-1}]\circ\varphi(h))^3\neq \id.
\]

In \cite{MR1714823}, Gizatullin constructs an extension $\Phi\colon\Cr_2\to\Cr_5$ of $\varphi$, defined by $\Phi|_{\PGL_3(\C)}=\varphi$ and
\[
\Phi(\sigma)=[x_1x_2:x_0x_2:x_0x_1:x_3x_0:x_4x_1:x_5x_2].\]
He shows the following:

\begin{proposition}[\cite{MR1714823}]
The map $\Phi\colon \Cr_2\to \Cr_5$ is a group homomorphism.
\end{proposition}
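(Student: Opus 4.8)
The plan is to realize $\Phi$ through a presentation of $\Cr_2$. By the theorem of Noether and Castelnuovo, $\Cr_2$ is generated by $\PGL_3(\C)$ together with the quadratic involution $\sigma$, and Lemma \ref{relations} records a complete set of defining relations for this generating set: the relations internal to $\PGL_3(\C)$, the involution relation $\sigma^2=\id$, the commutation (or normalization) relations between $\sigma$ and the relevant subgroups of $\PGL_3(\C)$, and the cubic relation $(\sigma h)^3=\id$ appearing as relation $(3)$. By the universal property of a group presentation, the prescription $g\mapsto\varphi(g)$ on $\PGL_3(\C)$ and $\sigma\mapsto\Phi(\sigma)$ extends to a (necessarily unique) group homomorphism $\Cr_2\to\Cr_5$ if and only if the chosen images satisfy each of these relations in $\Cr_5$. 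Since $\Phi|_{\PGL_3(\C)}=\varphi$ is already a homomorphism of algebraic groups, every relation internal to $\PGL_3(\C)$ is automatically preserved, and the problem reduces to the finitely many relations involving $\sigma$.

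First I would confirm that $\Phi(\sigma)$ is a genuine involution in $\Cr_5$. Its first three coordinates $[x_1x_2:x_0x_2:x_0x_1]$ are exactly the formula for $\sigma\in\Cr_2$, and composing $\Phi(\sigma)$ with itself produces the common factor $x_0x_1x_2$, after whose cancellation one recovers the coordinates $x_0,\dots,x_5$; hence $\Phi(\sigma)^2=\id$ and $\Phi(\sigma)$ is birational. Next I would verify the commutation relations: for a diagonal $g\in\PGL_3(\C)$, the matrix $\varphi(g)$ acts diagonally on the conic coordinates $[a_{00}:a_{11}:a_{22}:a_{12}:a_{02}:a_{01}]$, and since $\Phi(\sigma)$ is a monomial map it conjugates this diagonal torus to another diagonal torus; one checks by direct substitution that the resulting transformation is exactly the one prescribed by the corresponding relation of Lemma \ref{relations}. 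The same kind of direct check handles any relation involving the coordinate permutations in $\PGL_3(\C)$.

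The heart of the matter, and the step I expect to be the main obstacle, is relation $(3)$: one must show $(\Phi(\sigma)\circ\varphi(h))^3=\id$ in $\Cr_5$ for the specific linear element $h$. This is precisely the relation that fails for the naive candidate $[x_0^{-1}:\dots:x_5^{-1}]$, so Gizatullin's choice of $\Phi(\sigma)$ is tailored to make it hold, and there is no avoiding a confrontation with it. I would carry this out by writing $\varphi(h)$ explicitly as a $6\times 6$ matrix acting on the conic coordinates, composing it with the monomial map $\Phi(\sigma)$, and verifying that the cube is the identity after clearing the common polynomial factor at each stage of the composition.

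To keep this computation manageable I would use that two birational self-maps of $\p^5$ coincide as soon as they agree on a dense subset: it therefore suffices to evaluate $(\Phi(\sigma)\circ\varphi(h))^3$ at a generic point of $\p^5$ — say a generic smooth conic — rather than to simplify the full rational expression. I would take care not to invoke the $\Cr_2$-equivariance statements of Theorem \ref{gizatullintheorem}, which are consequences of the present proposition and whose use here would be circular, relying only on the $\PGL_3(\C)$-equivariant structure of the conic space, which is independent of $\Phi$. Once all relations of Lemma \ref{relations} have been verified, the universal property of the presentation yields that $\Phi\colon\Cr_2\to\Cr_5$ is a well-defined group homomorphism.
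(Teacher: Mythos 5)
Your argument has a genuine gap at its foundation: you treat Lemma \ref{relations} as supplying \emph{defining} relations for $\Cr_2$ with respect to the generating set $\PGL_3(\C)\cup\{\sigma\}$, but neither the theorem of Noether and Castelnuovo nor Lemma \ref{relations} says this. Noether--Castelnuovo gives generation only, and Lemma \ref{relations} merely records three relations that \emph{hold} in $\Cr_2$; it is silent on whether every relation among these generators is a consequence of them (together with $\sigma^2=\id$ and the relations internal to $\PGL_3(\C)$). The universal property of a presentation can be invoked only once one knows that the canonical surjection from the abstractly presented group onto $\Cr_2$ is an isomorphism, and that is a deep theorem, not a formality: such a presentation with exactly this generating set was established only later (Urech--Zimmermann, ``A new presentation of the plane Cremona group''), by a proof resting on Iskovskikh's presentation and the amalgamated-product structure of $\Cr_2$. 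Note also that the paper uses these relations only in the valid direction: a homomorphism must preserve relations, which is how the naive candidate $[x_0^{-1}:\cdots:x_5^{-1}]$ is ruled out via relation (3). The converse direction --- that satisfying these particular relations suffices for well-definedness --- is exactly what your argument assumes and what must be proved; a priori $\Cr_2$ could have further relations that the proposed images violate.

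For comparison, the paper itself does not prove this proposition; it quotes it from Gizatullin \cite{MR1714823}, whose proof verifies the defining relations of his own 1982 presentation of $\Cr_2$ (generators: all quadratic transformations; defining relations: the identities $q_1q_2q_3=\id$ among quadratic maps) --- so there, too, a presentation theorem is the indispensable input. Your overall strategy (check finitely many relations, conclude by a presentation) is the right one, and your computational sketch --- $\Phi(\sigma)^2=\id$ after cancelling the factor $x_0x_1x_2$, compatibility with the diagonal torus and the coordinate permutations, and the cube relation $(\Phi(\sigma)\varphi(h))^3=\id$ --- is essentially the verification one would carry out. But as written, the completeness of the relation set is asserted where it must be cited or proved, and that assertion is precisely the hard part of the statement.
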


\subsection{The dual action}\label{dualaction}
We can also look at the representation $\varphi^\vee\colon\PGL_3(\C)\to\PGL_6(\C)$ that is defined by 
\[
\varphi^\vee(g):=\prescript{t}{}\varphi(g)^{-1}.
\]
In other words, $\varphi^\vee=\varphi\circ\alpha$, where $\alpha\colon\PGL_3(\C)\to\PGL_3(\C)$ is the algebraic automorphism $g\mapsto ({}^tg)^{-1}$.

Let $A=(a_{ij})$ be a $3\times 3$ matrix. The cofactor matrix $C(A)$ of $A$ is given by 
\[
C_{ij}(A)=(-1)^{i+j}A_{ij},
\]
 where $A_{ij}$ is the $i,j$-minor of $A$, i.e. the determinant of the $2\times 2$-matrix obtained by removing the $i$-th row and $j$-th column of $A$. We denote by 
\[
Ad(A):=\prescript{t}{}C(A)
\] 
the {\it adjugate} matrix of $A$. This is a classical construction and it is well known that $Ad(AB)=Ad(B)Ad(A)$ and that if $A$ is invertible, then $Ad(A)=\det(A)A^{-1}$. In particular, $Ad\colon\p M_3\dashrightarrow\p M_3$ is a birational map. The conic corresponding to the symmetric matrix $A$ is the dual of the conic corresponding to the symmetric matrix $A$. This is one of the birational maps that  A.\,R.\,Williams described 1938 in his paper ``Birational transformations in 4-space and 5-space'' (\cite{MR1563724}). 

\begin{lemma}\label{ad}
We identify $\p^5$ with the projectivized space of symmetric $3\times 3$ matrices $\p M_3$. The birational transformation $Ad\in\Cr_5$ is given by 
\[
Ad:=[x_1x_2-x_3^2:x_0x_2-x_4^2:x_0x_1-x_5^2:x_4x_5-x_0x_3:x_3x_5-x_1x_4:x_3x_4-x_2x_5].
\]
Moreover, $ad$ conjugates $\varphi$ to $\varphi^\vee$.
\end{lemma}

\begin{proof}
It is a straightforward calculation that the rational map $Ad$ from $\p^5$ to itself that corresponds to $Ad$ is given by
\[
Ad:=[x_1x_2-x_3^2:x_0x_2-x_4^2:x_0x_1-x_5^2:x_4x_5-x_0x_3:x_3x_5-x_1x_4:x_3x_4-x_2x_5].
\]

The actions of $\PGL_3(\C)$ on $\p M_3$ induced by $\varphi$ and $\varphi^\vee$ are given by
$\varphi(g)(X)=gX(\prescript{t}{}g)$
and
$\varphi^\vee(g)X=\prescript{t}{}(g^{-1})Xg^{-1}$
respectively, for all $X\in \p M_3$. We obtain
\[
Ad(\varphi(g)(X))=Ad(\prescript{t}{}g)Ad(X)Ad(g)=(\prescript{t}{}g)^{-1}Ad(X)g^{-1}=\varphi^\vee(g)Ad(X).
\]
\end{proof}

\begin{remark}
The Blow-up $Q$ of $\p^5$ along the Veronese surface is the so called {\it space of complete conics}. Let $U\subset\p^5$ be the open orbit of the $\PGL_3$-action on $\p^5$ given by $\varphi$, i.e. $U=\p^5\setminus S$. Then $U$ can be embedded into $\p(\C^6)\times\p((\C^6)^\vee)$ by sending a conic $C\in U$ to the pair $(C, C^\vee)$, where $\C^\vee$ denotes the dual conic of $C$. It turns out that $Q$ is isomorphic to the closure of $U$ in $\p(\C^6)\times\p((\C^6)^\vee)$. Moreover, the $\PGL_3$-action on $\p^5$ given by $\varphi$ lifts to an algebraic action on $Q$ and the birational map $ad$ to an automorphism of $Q$. More details on this subject can be found for example in \cite{brion1989spherical}.
\end{remark}

Lemma \ref{ad} shows that the representations $\varphi$ and $\varphi^\vee$ are conjugate to each other in $\Cr_5$ by the birational transformation $Ad$. 
By conjugating $\Phi(\sigma)$ with $Ad$ we can extend $\varphi^\vee$ to the {\it dual embedding} $\Phi^\vee\colon\Cr_2\to\Cr_5$ and obtain
\[
\Phi^\vee(\sigma)=[(x_1x_2-x_3^2)^2x_0: (x_0x_2-x_4^2)^2x_1:(x_0x_1-x_5^2)^2x_2:  \]\[ (x_0x_2-x_4^2)(x_0x_1-x_5^2)x_3: (x_1x_2-x_3^2)(x_0x_1-x_5^2)x_4: (x_1x_2-x_3^2)(x_0x_2-x_4^2)x_5].
\]

\subsection{Geometry of $\Phi$}\label{geometry}
The embedding $\Phi$ induces a rational action of $\Cr_2$ on the space of conics on $\p^2$. The action of $\Phi(\sigma)$ can be viewed geometrically as follows (compare with \cite[Introduction]{MR1714823}): Let $Q_0:=[1:0:0]$, $Q_1:=[0:1:0]$ and $Q_2:=[0:0:1]$. Let $C\subset\p^2$ be a conic that doesn't pass through any of the points $Q_i$. Write
\[
C=\{a_{00}X^2+a_{11}Y^2+a_{22}Z^2+2a_{12}YZ+2a_{02}XZ+2a_{01}XY=0\}\subset\p^2
\]

Denote by $P_{1i}, P_{2i}$ the points of intersection of $C$ with the lines $l_i$, where $l_0:=\{X=0\}$, $l_1:=\{Y=0\}$ and $l_2:=\{Z=0\}$.  Denote by $f_{1i}$ and $f_{2i}$ the lines passing through $Q_i$ and $P_{i1}$ respectively through $Q_i$ and $P_{i2}$. The images $\sigma(f_{ji})$ are again lines passing through the points $Q_i$. Let $P'_{1i}$ and $P'_{2i}$ be the intersection points of $\sigma(f_{1i})$ and $\sigma(f_{2i})$ with $l_i$. One checks that the conic $D$ defined by the equation
\[
a_{11}a_{22}x_0^2+a_{00}a_{22}x_1^2+a_{00}a_{11}x_2^2+2a_{00}a_{12}x_1x_2+2a_{11}a_{02}x_0x_2+2a_{22}a_{01}x_0x_2=0
\]
passes through the points $P_{ij}'$. Since no 4 of the 6 points $P_{ij}$ lie on the same line, $D$ is the unique conic through the points $P_{ij}$. We have thus proven the following:

\begin{proposition}
For a general conic $C\subset\p^2$ there exists a unique conic $D$ through the six points $P_{ij}^\prime$ and $D$ is the image of $C$ under $\Phi(\sigma)$.
\end{proposition}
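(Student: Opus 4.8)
The plan is to take $D$ to be, by definition, the conic $\Phi(\sigma)(C)$ and then to verify directly that this conic passes through the six points $P'_{ij}$; the assertion that $D$ is the image of $C$ under $\Phi(\sigma)$ then holds tautologically, and only the incidence and the uniqueness require work. First I would read off the equation of $D=\Phi(\sigma)(C)$ from the explicit formula $\Phi(\sigma)=[x_1x_2:x_0x_2:x_0x_1:x_3x_0:x_4x_1:x_5x_2]$ together with the identification of $\p^5$ with $\p M_3$: writing $C$ as the symmetric matrix $(a_{ij})$, the image has entries $a'_{00}=a_{11}a_{22}$, $a'_{11}=a_{00}a_{22}$, $a'_{22}=a_{00}a_{11}$, $a'_{12}=a_{00}a_{12}$, $a'_{02}=a_{11}a_{02}$ and $a'_{01}=a_{22}a_{01}$, which is precisely the displayed equation for $D$.

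The heart of the argument is the incidence $P'_{ij}\in D$, and here I would exploit the evident $\s_3$-symmetry of the whole construction under permutation of the three homogeneous coordinates, so that it suffices to treat a single coordinate line, say $l_2=\{Z=0\}$. On $l_2$ the conic $C$ restricts to the binary quadratic $a_{00}X^2+2a_{01}XY+a_{11}Y^2$, whose two roots are $P_{12},P_{22}$. Since $\sigma=[YZ:XZ:XY]$ sends a line $aX+bY=0$ through $Q_2=[0:0:1]$ to the line $bX+aY=0$, tracing through the construction shows that passing from $P_{j2}=[X_j:Y_j:0]$ to $P'_{j2}$ simply interchanges the first two coordinates, i.e.\ $P'_{j2}=[Y_j:X_j:0]$. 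On the other hand, restricting the equation of $D$ to $l_2$ gives $a_{22}\bigl(a_{11}X^2+2a_{01}XY+a_{00}Y^2\bigr)$, which is obtained from the restriction of $C$ by interchanging the coefficients of $X^2$ and $Y^2$; its roots are therefore exactly the coordinate-swapped points $[Y_j:X_j:0]=P'_{j2}$. (Here $a_{22}\neq 0$ because $C$ avoids $Q_2$.) This establishes $P'_{12},P'_{22}\in D$, and the two remaining coordinate lines follow by the symmetry.

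It remains to see that $D$ is the unique conic through the six points, for which I would invoke the general-position remark already flagged in the text. For general $C$ the six points are distinct, lie two on each coordinate line $l_i$, and none of them coincides with a coordinate point $Q_k$ (this uses that $C$ avoids the $Q_k$, which prevents a point on $l_i$ from sliding onto another coordinate line). Consequently each coordinate line contains exactly two of the six points, while any other line meets each $l_i$ in a single point and hence contains at most three of them; in particular no four of the six points are collinear. A standard B\'ezout argument then forbids two distinct conics through the six points: if $D_1\neq D_2$ both passed through them, then either they share no component and meet in at most four points, which is impossible since they share six, or they share a line $L$, forcing at least five of the six points onto $L$ and contradicting the no-four-collinear condition. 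Since $D$ is one such conic, it is the unique one.

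The step I expect to be most delicate is the bookkeeping in the incidence verification — correctly identifying the lines $f_{ji}$, their $\sigma$-images, and the intersection points $P'_{ji}$. The coordinate-swap description reduces this to the elementary observation that interchanging the coefficients of $X^2$ and $Y^2$ in a binary quadratic interchanges its roots under the corresponding coordinate swap; once this is in place the computation is routine and symmetric in the three coordinates, and the uniqueness is purely a matter of general position.
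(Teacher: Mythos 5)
Your proposal is correct and follows essentially the same route as the paper: write down the explicit equation of $D=\Phi(\sigma)(C)$ in the conic coordinates, verify that the six points $P'_{ij}$ satisfy it, and deduce uniqueness from the fact that no four of the six points are collinear. The paper compresses the incidence check into ``one checks'' and quotes the general-position uniqueness as standard, whereas you supply both details (the $\s_3$-symmetry reduction with the coordinate-swap observation, and the B\'ezout argument), but the mathematical content is the same.
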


Notice as well that the indeterminacy points of $\Phi(\sigma)$ in $\p^5$ correspond to the subspace of dimension 2 of conics passing through the points $Q_1, Q_2, Q_3$ and the subspaces of dimension 2 of conics consisting of one $l_i$ and any other line. The three subspaces of dimension 4 of conics passing through one of the points $Q_i$ are contracted by the action of $\Phi(\sigma)$ and form the exceptional divisor.

In homogeneous coordinates of $\p^5$, the four planes of indeterminacy locus of $\Phi(\sigma)$ can be described as follows
\[
E_0=\{x_1=x_2=x_3=0\}, E_1=\{x_0=x_2=x_4=0\}, E_2=\{x_0=x_1=x_5=0\}
\]
\[
\text{ and } F=\{x_1=x_2=x_3=0\}.
\]

The exceptional divisor of $\Phi(\sigma)$ consists of the three hyperplanes 
\[
H_0=\{x_0=0\}, H_1=\{x_1=0\}, H_2=\{x_2=0\},
\]
The hyperplanes $H_0, H_1$ and $H_2$ are contracted by $\Phi(\sigma)$ onto the planes $E_0, E_1$ and $E_2$ respectively. Note as well that $E_0, E_1$ and $E_2$ are contained in the secant variety $S\subset\p^5$ of the Veronese surface $V$ and they are tangent to $V$. 

The geometrical description of the rational action of $\Phi^\vee(\sigma)$ on the space of conics is the dual of the construction described above. If $C$ is a conic not passing through any of the points $Q_0, Q_1, Q_2$, we get $\Phi^\vee(\sigma)(C)$ in the following way: let $l_{i,1}, l_{i,2}$ be the tangents of $C$ passing through the point $Q_i$. Then the images of the $l_{i,1}$ and $l_{i,2}$ under $\sigma$ are lines again. There exists a unique conic having all the lines $\sigma(l_{i,1})$ and $\sigma(l_{i,2})$ for all $i$ as tangents. 

These geometrical constructions show that $\Phi(\Cr_2)$ preserves the space of conics consisting of double lines and therefore the Veronese surface $V$ in $\p^5$. The injective morphism
\[
v\colon \p^2\to\p^5, \hspace{1mm}[X:Y:Z]\mapsto [X^2:Y^2:Z^2:YZ:XZ:XY]
\]
is called the {\it Veronese morphism}. It is an isomorphism onto its image, which is $V$.
It is well known that $v$ is $\PGL_3(\C)$-equivariant with respect to the standard action and the action induced by $\Phi$ respectively. The restriction of $\Phi(\sigma)$ to $V$ is a birational transformation. We therefore obtain a rational action  of $\Cr_2$ on $V\simeq\p^2$. Since the restrction of this rational action to $\PGL_3(\C)$ is the standard action, we obtain by Corollary \ref{autgn} that $v$ is $\Cr_2$-equivariant.

We observe as well that $\Phi(\Cr_2)$ preserves the secant variety $S\subset\p^5$ of $V$. Note that $S$ is the image of the morphism:
\[
\s\colon\p^2\times\p^2\to S\subset\p^5, \]
that maps the point $[X:Y:Z],[U:V:W]\in\p^2\times\p^2$ to the point 
\[ [XU:YV:ZW:1/2(YW+UZ):1/2(XW+ZU):1/2(XV+YU)].
\]
Note that $s$ is generically $2:1$.
Again, the geometrical construction above shows that $s$ is $\Cr_2$-equivariant with respect to the diagonal action on $\p^2\times\p^2$ and the action given by $\Phi$ on $\p^5$ respectively.

We obtain the following sequence of $\Cr_2$-equivariant maps:
 
 \[
 \p^2\xrightarrow{\Delta} \p^2\times\p^2\xrightarrow{s}\p^5,
 \]
 where $\Delta$ is the diagonal embedding.
 This proves part (2) to (4) of Theorem \ref{gizatullintheorem}.

 The observation that $\Phi(\Cr_2)$ preserves the Veronese surface and extends the canonical rational action of $\Cr_2$ has a nice consequence:

\begin{proposition}\label{degreegeq}
Let $f\in\Cr_2$. Then $\deg(f)\leq\deg(\Phi(f))$.
\end{proposition}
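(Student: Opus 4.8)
The plan is to exploit the $\Cr_2$-equivariance of the Veronese morphism $v\colon\p^2\to\p^5$ established just above, which gives, for every $f\in\Cr_2$, an equality of rational maps
\[
\Phi(f)\circ v=v\circ f\colon\p^2\dashrightarrow\p^5.
\]
Since the two sides are the same rational map $\p^2\dashrightarrow\p^5$, they have the same degree, where degree means the common degree of a defining tuple of homogeneous polynomials having no common factor. The idea is to compute this single degree in two ways: one computation will see $\deg(f)$, the other $\deg(\Phi(f))$.

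First I would evaluate the right-hand side. Writing $f=[f_0:f_1:f_2]$ in reduced form, with the $f_i$ of degree $e:=\deg(f)$ and $\gcd(f_0,f_1,f_2)=1$, composition with $v$ gives
\[
v\circ f=[f_0^2:f_1^2:f_2^2:f_1f_2:f_0f_2:f_0f_1].
\]
These six forms of degree $2e$ have no common factor, since an irreducible $p$ dividing all of them would in particular divide each $f_i^2$, hence each $f_i$, contradicting coprimality. Thus $\deg(v\circ f)=2\deg(f)$ exactly.

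Next I would bound the left-hand side. Writing $\Phi(f)=[g_0:\dots:g_5]$ in reduced form with the $g_i$ of degree $d:=\deg(\Phi(f))$, substituting the degree-$2$ Veronese monomials $x_0=X^2,\dots,x_5=XY$ turns each $g_i\circ v$ into a form of degree $2d$ in $X,Y,Z$. The degree of $\Phi(f)\circ v$ is therefore $2d$ minus the degree of the greatest common factor of the $g_i\circ v$, so it is at most $2\deg(\Phi(f))$. Feeding both computations into the equality $\Phi(f)\circ v=v\circ f$ yields
\[
2\deg(f)=\deg(v\circ f)=\deg(\Phi(f)\circ v)\le 2\deg(\Phi(f)),
\]
and dividing by $2$ gives the claim.

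The crux—and the source of the asymmetry that makes the inequality point the right way—is that $v\circ f$ is automatically in reduced form, so that its degree is exactly $2\deg(f)$, whereas $\Phi(f)\circ v$ may pick up a common factor under the substitution and hence only supplies the upper bound $2\deg(\Phi(f))$. I do not expect any serious obstacle; the one point requiring care is the coprimality of the six Veronese forms built from $f_0,f_1,f_2$, which the argument above settles.
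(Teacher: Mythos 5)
Your proof is correct, and it takes a genuinely different route from the paper's. Both arguments rest on the same key input---the $\Cr_2$-equivariance of the Veronese embedding $v$, i.e.\ the equality of rational maps $\Phi(f)\circ v=v\circ f$---but you exploit it algebraically, whereas the paper exploits it geometrically. The paper's proof is an intersection-theoretic one: it takes general conics $C,C'\subset\p^2$, notes that $f(C')$ meets $C$ in $4\deg(f)$ points by B\'ezout, transports this to $\p^5$ via $v$, and compares with the fact that the curve $\Phi(f)(v(C'))$, of degree at most $4\deg(\Phi(f))$, meets the hyperplane $H$ cutting out $v(C)=H\cap V$ in at least those $4\deg(f)$ points. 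Your substitution argument replaces all of this with the observation that $v\circ f$ is automatically given by coprime forms of degree $2\deg(f)$ (an irreducible common factor of $f_0^2,f_1^2,f_2^2$ would divide each $f_i$), whereas $\Phi(f)\circ v$ is given by forms of degree $2\deg(\Phi(f))$ that may acquire a common factor; equating the two reduced degrees gives the inequality, and you correctly identify the possible common factor on one side only as the source of the asymmetry. What your approach buys: it avoids the general-position and transversality considerations (distinctness of the $4d$ intersection points, the image curve avoiding the indeterminacy locus of $\Phi(f)$) that the paper's sketch leaves implicit, and it uses only the uniqueness, up to scalar, of the coprime polynomial representation of a rational map $\p^2\dashrightarrow\p^5$. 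What the paper's approach buys: it is coordinate-free and would adapt to any equivariant embedding where one controls intersection numbers without an explicit polynomial parametrization. One small point you should make explicit: the composition $\Phi(f)\circ v$ is a well-defined rational map (equivalently, the forms $g_i\circ v$ do not all vanish identically) because $V$ is not contained in the indeterminacy locus of $\Phi(f)$; this follows from the fact, established just before in the paper, that $\Phi(f)$ restricts to a birational transformation of $V$.
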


\begin{proof}
Denote by $v\colon\p^2\to\p^5$ the Veronese embedding. Let $C\subset\p^2$ be a general conic. The image $v(C)\subset \p^5$ is a curve of degree 4 given by the intersection of a hyperplane $H\subset \p^5$ and the Veronese surface. Let $f\in\Cr_2$ be a birational transformation of degree $d$. The strict transform $f(C')$ of a general conic $C'\subset\p^2$ intersects $C$ in $4d$ different points. So $v(C)$ intersects $v(f(C'))$ in $4d$ different points. By the above results we know that $v(f(C'))=\Phi(f)(v(C'))$. Let $d'=\deg(\Phi(f))$. Since $v(C')$ is a curve of degree 4, this yields that $\Phi(f)(v(C'))$ is a curve of degree $4d'$. The curve $\Phi(f)(v(C'))$ intersects the hyperplane $H$ in $4d$ points, hence \hbox{$d'\geq d$.}
\end{proof}

 \subsection{Two induced embeddings from $\Cr_2$ into $\Cr_4$}\label{inducedsection}
The birational map $Ad\in\Cr_5$ contracts the secant variety $S\subset\p^5$ onto the Veronese surface $V\subset\p^5$.  However, the exceptional locus of $\Psi^\vee(\sigma)=Ad\Phi(\sigma)Ad$ consists of the three hyperplanes 
\[
G_0=\{z_1z_2-z_3^2=0\}, G_1=\{z_0z_2-z_4^2=0\},G_2=\{z_0z_1-z_5^2=0\},
\]
with respect to homogeneous coordinates $[z_0:z_1:z_2:z_3:z_4:z_5]$ of $\p^5$.

This implies in particular that the restriction of $\Phi^\vee(\sigma)$ to $S$ induces a birational map of~$S$ and therefore that any element in $\Phi^\vee(\Cr_2)$ restricts to a birational map of $S$.
 
Since $S$ is a cubic hypersurface and contains the two disjoint planes 
\[
E_1=\{z_1=z_2=z_3=0\}, E_2=\{z_0=z_4=z_5=0\},
\]
it is rational. Explicitely, projection onto $E_1$ and $E_2$ yields the birational map $A\colon S\dashrightarrow  \p^2\times\p^2$ defined by
\[
[z_0:z_1:z_2:z_3:z_4:z_5]\mapsto [z_1:z_2:z_3],[z_0:z_4:z_5].
\]
The inverse transformation $A^{-1}$ is given by
\[
[x_0:x_1:x_2],[y_0:y_1:y_2]\mapsto [p_2y_0, p_1x_0, p_1x_1, p_1x_2, p_2y_1, p_2y_2],
\]
where 
$p_1=(x_0y_1^2+x_1y_2^2-2x_2y_1y_2) \text{  and } p_2=y_0(x_0x_1-x_2^2).$

Let $f\in\Cr_2$. As seen above, both images $\Phi(f)$ and $\Phi^\vee(f)$ restrict to a birational map of $S$. So conjugation of $\Phi$ and $\Phi^\vee$ by $A$ yields two embeddings from $\Cr_2$ into $\Bir(\p^2\times\p^2)\simeq\Cr_4$, which we  denote by $\Psi_1$ and $\Psi_2$ respectively.

\begin{proof}[Proof of Proposition $\ref{inducedembedd}$]
Irreducibility is proved in Section \ref{irredsection}.

By Theorem \ref{poptori}, all tori $D_2\subset\Cr_4$ are conjugate to the standard torus $D_2\subset \Cr_4$. We calculate the map that conjugates $\Psi_1(D_2)=\Psi_2(D_2)$ to the image of the standard embedding of $D_2$ explicitely.
Let $\rho\colon\p^2\times\p^2\dashrightarrow\p^2\times\p^2$
be the birational transformation defined by
\[
([x_0:x_1:x_2],[y_0:y_1:y_2])\mapsto([x_2y_0:x_0y_1:x_2y_1], [x_0y_1^2:x_1y_2^2:x_2y_1y_2]).
\]
The inverse map $\rho^{-1}$ is given by 
\[
([x_0:x_1:x_2],[y_0:y_1:y_2])\mapsto ([x_1^2y_2^2:x_2^2y_0y_1:x_1x_2y_2^2], [x_0y_0:x_2y_0:x_1y_2]).
\]
One calculates that $\rho A\Psi_1([aX:bY:cZ]) A^{-1}\rho^{-1}$
maps $([x_0:x_1:x_2], [y_0:y_1:y_2])$ to $([ax_0:bx_1:cx_2], [y_0:y_1:y_2])$. Correspondingly, 
$\rho A\Psi_2([aX:bY:cZ]) A^{-1}\rho^{-1}$
maps $([x_0:x_1:x_2], [y_0:y_1:y_2])$ to $([a^{-1}x_0:b^{-1}x_1:c^{-1}x_2], [y_0:y_1:y_2])$. So the second coordinates parametrize the closures of the $D_2$-orbits. Since $\W_2$ normalizes $D_2$, its image preserves the $D_2$-orbits. We thus obtain two homomorphisms
\[
\chi_1\colon\W_2\to\Cr_2,  \chi_2\colon\W_2\to\Cr_2
\]
by just considering the rational action of $\W_2$ on the second coordinate. 

Assume that there exists an element $A\in\Bir(\p^2\times\p^2)$ that conjugates $\Psi$ to $\Psi^\vee$. As $A$ normalizes $\Psi_1(D_2)=\Psi_2(D_2)$, it preserves the $\Psi_1(D_2)$-orbits as well. Hence by restriction on the second coordinate, it conjugates $\chi_1$ to $\chi_2$. It therefore suffices to show that $\chi_1$ and $\chi_2$ are not conjugate.

In $\Cr_2$ we have
\[
f:=[XY:YZ:Z^2]=\tau_1g_0\sigma g_0\sigma g_0\tau_2,
\]
where $\tau_1=[Z:Y:X], \tau_2=[Y:Z:X]$ and $g_0=[Y-X:Y:Z]$. By calculating the corresponding images under $\Phi$ we obtain
\[
\Phi(f)=\Phi(\tau_1g_0\sigma g_0\sigma g_0\tau_2)=[x_0x_1:x_1x_2:x_2^2: x_2x_3: -x_2x_5+2x_3x_4: x_1x_4]
\]
and $\Phi^\vee(f)=[g_0:g_1:g_2:g_3:g_4:g_5]$,
where
\[
g_0=(x_0 x_1-x_5^2)^2 x_0,
\]
\[
g_1=x_0^2 x_1^2 x_2-2 x_0 x_1 x_2 x_5^2-4 x_0 x_1 x_3 x_4 x_5+4 x_0 x_3^2 x_5^2+4 x_1 x_4^2 x_5^2+x_2 x_5^4-4 x_3 x_4 x_5^3,
\]
\[
g_2=(x_0 x_2-x_4^2)^2 x_1,
\]
\[
g_3=(x_0 x_2-x_4^2) (x_0 x_1 x_3-2 x_1 x_4 x_5+x_3 x_5^2),
\]
\[
g_4= -(x_0 x_2-x_4^2) (x_0 x_1-x_5^2) x_5,
\]
\[
g_5=(x_0 x_1-x_5^2) (x_0 x_1 x_4-2 x_0 x_3 x_5+x_4 x_5^2).
\]
This yields
\[
\chi_1(f)=[(y_1-2y_2)^2: y_0y_1:-y_2(y_1-2y_2)]
\]
and
\[
\chi_2(f)=[y_0^2 y_1+4 y_0 y_1^2-6 y_0 y_1 y_2-3 y_1 y_2^2+4 y_2^3 : y_0 (y_0+2 y_1-3 y_2)^2 :
\]
\[ (2 y_0 y_1-y_0 y_2-y_2^2) (y_0+2 y_1-3 y_2)].
\]
We show that these two transformations are not conjugate in $\Cr_2$.
With respect to affine coordinates $[y_0:y_1:1]$ one calculates
\[
\chi_1(f)^2=\left(\frac{y_0y_1-2y_1+4}{y_1-2}, y_1 \right).
\]
From this we see that the integer sequence $\deg(\chi_1(f)^n)$ grows linearly in $n$ and is, in particular, not bounded. 

Let $A=[y_0-y_2:y_1-y_2:y_2].$ Then 
\[
A\chi_2(f)^2A^{-1}=[-y_0^2 y_1^2 (2 y_1+y_0): y_0^2 y_1^2 (3 y_1+2 y_0): p(y_0, y_1, y_2 )(3 y_1+2 y_0)(2 y_1+y_0) ],
\]
where $p(y_0, y_1, y_2)=(6 y_1^2 y_2+7 y_2 y_0 y_1+6 y_0 y_1^2+2 y_0^2 y_2+2 y_0^2 y_1)$.
We claim that 
\[
f_A^n=A\chi_2(f)^{2n}A^{-1}=[-y_0^2y_1^2(2ny_1+(2n-1)y_0):y_0^2y_1^2((2n+1)y_1+2ny_0):f_n],
\]
where $f_n=(2ny_1+(2n-1)y_0)((2n+1)y_1+2ny_0)p_n(y_0, y_1, y_2)$ for some homogeneous $p_n\in\C[y_0, y_1, y_2]$ of degree $3$. Note that this claim implies in particular that $\deg(\chi_2(f)^n)$ is bounded for all $n$ and hence that $\chi_1(f)$ and $\chi_2(f)$ are not conjugate.

To prove the claim we proceed by induction. Assume that $f_A^n$ has the desired form. One calculates that the first coordinate of $f_A^{n+1}=A\chi_2(f)^{2}A^{-1}\circ f_A^n$ is 
\[
-ry_0^2y_1^2((2n+2)y_1+(2n+1)y_0),
\]
the second coordinate is
\[
ry_0^2y_1^2((2n+3)y_1+(2n+1)y_0)
\]
and the third coordinate
\[
r((2n+2)y_1+(2n+1)y_0)((2n+3)y_1+(2n+1)y_0)p_{n+1}(x_0, x_1, x_2),
\]
where $r=y_0^4y_1^4(2ny_0+(2n-1)y_1)^2((2n+1)y_0+2ny_1)^2$ and $p_n\in\C[x_0,x_1,x_2]$ is homogeneous of degree $3$. This proves the claim.
\end{proof}

 \subsection{A volume form}\label{volume}
Let $M$ be a complex projective manifold. It is sometimes interesting to study subgroups of $\Bir(M)$ that preserve a given form. In \cite{MR3080816} and \cite{MR3451389} the authors study for example birational maps of surfaces that preserve a meromorphic symplectic form (see \cite{Corti:2015aa} for the 3-dimensional case). In \cite{MR2433658} and \cite{Cerveau:2016aa} Cremona transformations in dimension 3 preserving a contact form are studied.

Define 
\[
F:=\det\left(
\begin{array}{ccc}
 x_0&x_5&x_4    \\
  x_5& x_1 &x_3  \\
  x_4&x_3&x_2\\ 
\end{array}
\right)
\]
and let 
\[
\Omega:=\frac{x_5^6}{F^2} \cdot dx_0\wedge dx_1\wedge dx_2\wedge dx_3\wedge dx_4.
\]
Then $\Omega$ is a $5$-form on $\p^5$ with a double pole along the secant variety of the Veronese surface. Note that the total volume of $\p^5$ is infinite.

\begin{proposition}
All elements in $\Phi(\Cr_2)$ preserve $\Omega$.
\end{proposition}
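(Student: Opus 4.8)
The plan is to invoke the theorem of Noether and Castelnuovo, which gives $\Cr_2=\langle\PGL_3(\C),\sigma\rangle$. Since the set $\{g\in\Bir(\p^5)\mid g^*\Omega=\Omega\}$ is a subgroup, it suffices to check that $\Phi(g)^*\Omega=\Omega$ for $g\in\PGL_3(\C)$ and that $\Phi(\sigma)^*\Omega=\Omega$. First I would rewrite $\Omega$ in a coordinate-free way on all of $\p^5$: letting $\eta:=\sum_{i=0}^{5}(-1)^i x_i\,dx_0\wedge\cdots\widehat{dx_i}\cdots\wedge dx_5$ be the Euler (Leray) form, one checks that $\eta/F^2$ is a well-defined rational $5$-form on $\p^5$ (as $F$ is homogeneous of degree $3$ and $\eta$ scales with weight $6$), and that $\Omega=-\eta/F^2$ in the chart $\{x_5=1\}$. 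Here $F=\det X$ with $X$ the symmetric matrix $(a_{ij})$, so $\{F=0\}=S$ and $\Omega$ has poles exactly along the secant variety.

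For the linear part I would exploit that $\varphi(g)$ acts on symmetric matrices by $X\mapsto gX\,{}^tg$. This gives $\varphi(g)^*F=(\det g)^2 F$, hence $\varphi(g)^*(F^2)=(\det g)^4 F^2$. On the other hand, for a linear lift $A$ of $\varphi(g)$ one has $A^*\eta=\det(A)\,\eta$ (since $A$ commutes with the Euler field and $A^*(dx_0\wedge\cdots\wedge dx_5)=\det(A)\,dx_0\wedge\cdots\wedge dx_5$), and because the representation is the symmetric square of the standard representation of $\GL_3(\C)$ one computes $\det(A)=(\det g)^4$. The two weights cancel, so $\varphi(g)^*(\eta/F^2)=\eta/F^2$. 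Conceptually this only records that $\Omega$ is the essentially unique $\PGL_3(\C)$-invariant volume form with double poles along $S$, normalised so that the invariance is exact rather than merely up to a scalar.

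The real work is the quadratic generator $\Phi(\sigma)=[x_1x_2:x_0x_2:x_0x_1:x_0x_3:x_1x_4:x_2x_5]$, and I would treat the two relevant factors separately. Substituting these monomials into $X$ yields $DXD$ with $D=\operatorname{diag}(\sqrt{x_1x_2/x_0},\sqrt{x_0x_2/x_1},\sqrt{x_0x_1/x_2})$, so $\Phi(\sigma)^*F=\det(DXD)=(x_0x_1x_2)\,F$, whence $\Phi(\sigma)^*(F^2)=(x_0x_1x_2)^2F^2$ (this also follows from a direct determinant expansion). Next, for any homogeneous map of degree $2$ the identities $dx_i\wedge\eta=x_i\,\mu$ and $\iota_E\eta=0$ (with $\mu=dx_0\wedge\cdots\wedge dx_5$ and $E$ the Euler field) force $\Phi(\sigma)^*\eta=\tfrac12\mathcal{J}\,\eta$, where $\mathcal{J}$ is the Jacobian determinant of the lift. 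The Jacobian here is block lower-triangular with diagonal blocks $\bigl(\begin{smallmatrix}0&x_2&x_1\\ x_2&0&x_0\\ x_1&x_0&0\end{smallmatrix}\bigr)$ and $\operatorname{diag}(x_0,x_1,x_2)$, giving $\mathcal{J}=2(x_0x_1x_2)^2$. Combining, $\Phi(\sigma)^*(\eta/F^2)=\tfrac12\mathcal{J}\,\eta/\bigl((x_0x_1x_2)^2F^2\bigr)=\eta/F^2$. The main obstacle is precisely this last step: one must verify that the Jacobian factor $\tfrac12\mathcal{J}$ cancels exactly against the square of the factor picked up by $F$. The clean block structure of the Jacobian and the congruence $DXD$ are what make both factors equal $(x_0x_1x_2)^2$, and it is the exact agreement of these two independent computations — not merely agreement up to a constant — that yields genuine invariance.
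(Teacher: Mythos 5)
Your proof is correct, and although it shares the paper's overall skeleton --- by Noether--Castelnuovo it suffices to check the generators $\PGL_3(\C)$ and $\sigma$, since the stabilizer of $\Omega$ in $\Bir(\p^5)$ is a subgroup --- both generator checks are carried out by genuinely different means. For the linear part, the paper verifies invariance only for the single element $g=[-X:-Y:Z]$, observes that every $\Phi(f)$ with $f\in\PGL_3(\C)$ multiplies $\Omega$ by a constant (being linear and preserving $F$ up to scalar), so that all conjugates $\Phi(fgf^{-1})$ preserve $\Omega$, and then invokes simplicity of $\PGL_3(\C)$ to conclude. Your argument is uniform instead: writing $\Omega$ homogeneously as $\pm\eta/F^2$, a lift of $\varphi(g)$ multiplies $\eta$ by $\det(\Sym^2 g)=(\det g)^4$ and $F^2$ by $\bigl((\det g)^2\bigr)^2=(\det g)^4$, and the weights cancel; this explains why the invariance is exact rather than merely up to a character, and avoids the group-theoretic trick entirely. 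For $\sigma$, the paper passes to the affine chart $x_5=1$ and leaves the verification as ``a direct calculation,'' whereas you stay homogeneous: the congruence $X\mapsto DXD$ gives $\Phi(\sigma)^*F=(x_0x_1x_2)F$, the block-triangular Jacobian gives $\mathcal{J}=2(x_0x_1x_2)^2$, and the degree-$2$ pullback formula $f^*\eta=\tfrac{1}{2}\mathcal{J}\,\eta$ makes the two factors cancel on the nose. All of these computations check out (including the identity $f^*\eta=\tfrac{1}{d}\mathcal{J}\,\eta$ for homogeneous maps of degree $d$, which follows from Euler's relation $Df_x(E_x)=d\,E_{f(x)}$). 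What your route buys is a self-contained, structural proof that simultaneously re-establishes that $\eta/F^2$ is a well-defined rational $5$-form on all of $\p^5$; what the paper's route buys is brevity, at the price of delegating both the chart computation for $\Phi(\sigma)$ and the single-element check to the reader.
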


\begin{proof}
We show that $\Phi(\PGL_3(\C))$ and $\Phi(\sigma)$ preserve $\Omega$.

Let $g=[-X:-Y:Z]\in\Phi(\PGL_3(\C))$. One checks that $\Phi(g)$ preserves $\Omega$. Since $\Phi(\PGL_3(\C))$ preserves $F$, we have that $\Phi(fgf^{-1})$ preserves $\Omega$ as well. As $\Phi(\PGL_3(\C))$ is simple, the whole group preserves $\Omega$.

With respect to affine coordinates given by $x_5=1$, we have
\[
\Phi(\sigma)=(x_1, x_0, x_0x_1x_2^{-1},x_0x_3x_2^{-1}, x_1x_4x_2^{-1}).
\]
A direct calculation yields $\Omega\circ\Phi(\sigma)=\Omega$.
\end{proof}

\subsection{Polynomial automorphisms}\label{polauto}
In this section we will prove Claim (6) of Theorem \ref{gizatullintheorem} as well as Theorem \ref{degreeaut}. Let $\aut(\A^2)\subset\Cr_2$ be the subgroup of automorphisms of the affine plane with respect to the affine coordinates $[1:X:Y]$. By the theorem of Jung and van der Kulk (see for example \cite{MR1955604}), $\aut(\A^2)$ has the following amalgamated product structure
\[
\aut(\A^2)=\aff_2*_{\cap}\J_2,
\]
where $\J_2$ denotes the subgroup of elementary automorphisms, which is the subgroup of all elements of the form
\[
\left\{ (c_1X, c_2Y+p(X))\mid c_1, c_2\in\C, p(X)\in\C[X]\right\}.
\]
Let $f\in\aut(\A^2)$ and assume that $f=a_1j_1a_2j_2\cdots j_{n-1}a_n$, where $a_i\in\aff_2$ and $j_i\in\J_2\setminus\aff_2$. It is well known that $\deg(f)=\deg(j_1)\deg(j_2)\cdots\deg(j_{n-1})$. 

Let $\aut(\A^5)\subset\Cr_5$ be given by the affine coordinates $[1:x_1:\dots:x_5]$. Lemma~\ref{afftoaff} follows from a direct calculation.

\begin{lemma}\label{afftoaff}
The image $\Phi(\aff_2)$ is contained in $\aff_5$.
\end{lemma}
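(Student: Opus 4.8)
The plan is to prove Lemma \ref{afftoaff} by an explicit computation, exploiting the fact that $\aff_2$ is generated as a group by the linear part $\PGL_3(\C)\cap\aff_2$ together with translations, and that $\Phi$ restricted to $\PGL_3(\C)$ is the representation $\varphi$ on symmetric $3\times 3$ matrices. First I would observe that an affine transformation of $\A^2$, in the homogeneous coordinates $[x_0:x_1:x_2]$ of $\p^2$, is represented by a matrix $g\in\PGL_3(\C)$ of the lower-triangular block form
\[
g=\begin{pmatrix} 1 & 0 & 0\\ b_1 & a_{11} & a_{12}\\ b_2 & a_{21} & a_{22}\end{pmatrix},
\]
so that $g$ fixes the hyperplane at infinity $\{x_0=0\}$ and acts affinely on the chart $x_0\neq 0$. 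The key point is that this condition on $g$ translates, via $\varphi(g)(X)=gX({}^tg)$ on the symmetric matrix $X=(a_{ij})$, into a condition on $\Phi(g)=\varphi(g)$ that I expect will force $\varphi(g)$ to preserve the hyperplane $\{x_0=0\}\subset\p^5$ and act affinely on the chart $x_0\neq 0$.

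The main steps are as follows. I would take the entries of a symmetric matrix $X$, written as $[x_0:x_1:x_2:x_3:x_4:x_5]$ in the chosen identification, and compute $gX({}^tg)$ entrywise for $g$ in the above block-triangular form. Because the first row of $g$ is $(1,0,0)$, the $(0,0)$-entry of $gX({}^tg)$ is computed from the first rows, and I expect it to equal $x_0$ exactly (or, more precisely, a coordinate that depends only on the $x_0$-coordinate up to the lower-triangular structure), so that the coordinate $x_0$ of $\p^5$ is sent to an expression in which $x_0$ appears as the leading term. Concretely, the claim is that $\varphi(g)$ has the form $[x_0:\ell_1:\ell_2:\ell_3:\ell_4:\ell_5]$ where each $\ell_i$ is a linear form whose $x_0$-coefficient encodes the translation part $(b_1,b_2)$ and whose remaining coefficients encode the linear part $(a_{ij})$. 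Dehomogenizing at $x_0=1$ then shows $\varphi(g)$ is an affine automorphism of $\A^5$, i.e.\ $\Phi(g)\in\aff_5$.

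To make the computation clean rather than brute-force, I would reduce to generators: it suffices to check the claim on the linear subgroup of $\aff_2$ (which corresponds to $b_1=b_2=0$) and on the translations (which correspond to $a_{ij}=\delta_{ij}$, $b_i$ arbitrary), since these generate $\aff_2$ and $\aff_5$ is a group. For the linear part the computation $\varphi(g)(X)=gX({}^tg)$ with $g$ a $2\times 2$ matrix sitting in the lower-right block visibly preserves $\{x_0=0\}$ (it fixes the $(0,0)$-entry up to scalar) and is linear in the affine coordinates, so linearity is automatic. For translations, $g=\bigl(\begin{smallmatrix}1&0\\ b& I\end{smallmatrix}\bigr)$ with $b={}^t(b_1,b_2)$, and one computes $gX({}^tg)$ to see that the new entries are the old entries plus terms linear in the $x_i$ with coefficients polynomial in $b_1,b_2$ — again an affine map fixing $\{x_0=0\}$.

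The step I expect to be the main (though entirely routine) obstacle is bookkeeping: matching the matrix-product formula $gX({}^tg)$ against the specific coordinate ordering $[a_{00}:a_{11}:a_{22}:a_{12}:a_{02}:a_{01}]$ fixed in Section \ref{representation}, and verifying that every output coordinate is genuinely affine-linear in $x_1,\dots,x_5$ after setting $x_0=1$, with no hidden denominator. Since $\varphi(g)$ is a priori only a projective-linear map of $\p^5$, the real content is that the preimage of the affine chart is again the affine chart, i.e.\ that the $(0,0)$-entry of $gX({}^tg)$ is (a nonzero scalar multiple of) $x_0$ alone; this is exactly where the block-triangular shape of $g$ is used, and once it is checked the lemma follows immediately. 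As the statement says this is a direct calculation, I would simply carry out these two generator computations and record that the resulting maps lie in $\aff_5$.
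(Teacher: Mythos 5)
Your proposal is correct and matches the paper, which disposes of this lemma with exactly the remark that it ``follows from a direct calculation'': your computation that the block-triangular form of $g\in\aff_2$ (first row $(1,0,0)$) forces the $(0,0)$-entry of $gX({}^tg)$ to equal $x_0$, so that $\varphi(g)$ preserves the chart $\{x_0\neq 0\}$ and is affine there, is precisely that calculation, just written out. The reduction to linear maps and translations is a harmless organizational choice; the general block form works in one step.
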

We consider the following elements in $\J_2$:
 \[
 f_n^\lambda:=(X, Y+\lambda X^n),
\]
where $n\in\Z_{\geq 0}$ and $\lambda\in\C$.

\begin{lemma}\label{jonqim}
For all $n\in\Z_{\geq 0}$ we have
\[
\Phi(f^\lambda_n)=(x_1, x_2+\lambda^2x_1^n+\lambda x_3A_n-\lambda x_4x_1A_{n-1}, x_3+\lambda x_1B_{n-1}, x_4+\lambda B_n, x_5),
\]
where 
\[
A_n=2\sum_{k=0}^{\floor*{n/2}}\binom{n}{2k+1}x_5^{n-2k-1}(x_5^2-x_1)^k
\]
and
\[
B_n=\sum_{k=0}^{\floor*{n/2}}\binom{n}{2k}x_5^{n-2k}(x_5^2-x_1)^k.
\]
Moreover, the following recursive identities hold:
\[
A_n=2x_5A_{n-1}-x_1A_{n-2},
\]
\[
B_n=2x_5B_{n-1}-x_1B_{n-2}.
\]
\end{lemma}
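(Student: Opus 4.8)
The plan is to reduce the whole family to one conjugation relation inside $\Cr_2$ and then induct. The elementary automorphism $f_n^\lambda=(X,Y+\lambda X^n)$ is obtained from lower ones by conjugating with the monomial transformation $\mu:=(X,XY)\in\W_2$: a direct computation in affine coordinates (with $\mu^{-1}=(X,Y/X)$) gives
\[
\mu\circ f_n^\lambda\circ\mu^{-1}=f_{n+1}^\lambda,\qquad\text{hence}\qquad f_n^\lambda=\mu^n\circ f_0^\lambda\circ\mu^{-n},
\]
where $f_0^\lambda=(X,Y+\lambda)$ is a translation. Since $\Phi$ is a homomorphism, $\Phi(f_{n+1}^\lambda)=\Phi(\mu)\circ\Phi(f_n^\lambda)\circ\Phi(\mu)^{-1}$, so once $\Phi(\mu)$ is known the family is determined by iterated conjugation. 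Both $f_0^\lambda$ and $f_1^\lambda$ lie in $\PGL_3(\C)$ (a translation and a shear), so the base cases $\Phi(f_0^\lambda)=\varphi(g_0)$ and $\Phi(f_1^\lambda)=\varphi(g_1)$ are read off directly from $\varphi(g)(A)=gA({}^tg)$ on symmetric matrices, using the identification of $\p^5$ with $\p M_3$ from Section \ref{representation}; a short matrix multiplication confirms they match the claimed formula for $n=0,1$ (with $A_0=0$, $B_0=1$, $A_1=2$, $B_1=x_5$).

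The key input is $\Phi(\mu)$. I would write the quadratic de Jonquières map $\mu=[X_0^2:X_0X_1:X_1X_2]$ as an explicit word in $\sigma$ and $\PGL_3(\C)$ via Noether--Castelnuovo, and evaluate $\Phi$ on that word using $\Phi|_{\PGL_3}=\varphi$ and the given formula for $\Phi(\sigma)$, producing an explicit self-map of $\p^5$. With $\Phi(\mu)$ in hand, the inductive step is a computation: assuming $\Phi(f_n^\lambda)$ has the stated shape (fixing $x_1,x_5$; shifting $x_3,x_4$ linearly in $\lambda$ with coefficients $x_1B_{n-1}$ and $B_n$; shifting $x_2$ by $\lambda^2x_1^n+\lambda x_3A_n-\lambda x_4x_1A_{n-1}$), conjugation by $\Phi(\mu)$ should return the same shape with indices raised by one. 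Because $A_n,B_n$ depend only on the two invariants $x_1,x_5$, the data carried by the induction are the pairs $(A_{n-1},A_n)$ and $(B_{n-1},B_n)$, and the conjugation must send them to $(A_n,A_{n+1})$ and $(B_n,B_{n+1})$ with the new entries forced to be $A_{n+1}=2x_5A_n-x_1A_{n-1}$ and $B_{n+1}=2x_5B_n-x_1B_{n-1}$.

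It then remains to record two elementary facts about $A_n,B_n$. Setting $\alpha,\beta=x_5\pm\sqrt{x_5^2-x_1}$, the roots of $t^2-2x_5t+x_1$ (so $\alpha+\beta=2x_5$, $\alpha\beta=x_1$), the binomial theorem identifies the stated sums as $B_n=\tfrac12(\alpha^n+\beta^n)$ and $A_n=2(\alpha^n-\beta^n)/(\alpha-\beta)$, and the Newton relation for power sums immediately yields the asserted recursions $A_n=2x_5A_{n-1}-x_1A_{n-2}$ and $B_n=2x_5B_{n-1}-x_1B_{n-2}$. The same closed forms give the determinant identity $B_{n-1}A_n-B_nA_{n-1}=2x_1^{\,n-1}$, which is exactly what makes the $x_2$-coordinate transform with the correct $\lambda^2$-term and $\lambda\mapsto\Phi(f_n^\lambda)$ a one-parameter subgroup.

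The main obstacle I anticipate is the middle step: producing a manageable closed form for $\Phi(\mu)$ and then controlling the inductive conjugation — in particular checking that after conjugating by $\Phi(\mu)$ all denominators clear, so that the image stays in $\aut(\A^5)$, and that the $x_2$-shift reassembles precisely into $\lambda^2x_1^{\,n+1}+\lambda x_3A_{n+1}-\lambda x_4x_1A_n$. This is where both the recursion for $A_n$ and the determinant identity above are used, and it is the only genuinely delicate piece of bookkeeping; everything else is the reduction relation for $\mu$ together with the elementary identities for the sequences $A_n,B_n$.
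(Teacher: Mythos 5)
Your proposal follows essentially the same route as the paper's proof: the paper also reduces to conjugation by $s=(X,XY)$, writes $s$ as the word $\tau_1g_0\sigma g_0\sigma g_0\tau_2$ in $\PGL_3(\C)$ and $\sigma$ to compute $\Phi(s)=(x_1,x_1x_2,x_1x_4,2x_4x_5-x_3,x_5)$ explicitly, checks the base cases $n=0,1$ directly, runs the same induction to force the recursions $A_n=2x_5A_{n-1}-x_1A_{n-2}$, $B_n=2x_5B_{n-1}-x_1B_{n-2}$, and recovers the binomial sums from the closed forms in $x_5\pm\sqrt{x_5^2-x_1}$. Your auxiliary identity $B_{n-1}A_n-B_nA_{n-1}=2x_1^{n-1}$ is correct but not needed in the paper, which handles the $x_2$-coordinate bookkeeping directly inside the induction.
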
 

\begin{proof}
For $n=0$ and $n=1$ the claim follows from a direct calculation.

Let $s:=(X, XY)\in\Cr_2$. Then we have $f^\lambda_{n+1}=sf^\lambda_ns^{-1}$. In $\Cr_2$ the identity
$s=\tau_1g_0\sigma g_0\sigma g_0\tau_2$ holds,
where $\tau_1=(XY^{-1},Y^{-1}), \tau_2=(Y^{-1},XY^{-1})$ and $g_0=(X, X_Y)$. Note that $\tau_1$ and $\tau_2$ are elements of $\PGL_3$. If we calculate the corresponding images under $\Phi$ we obtain
\[
\Phi(s)=\Phi(\tau_1g_0\sigma g_0\sigma g_0\tau_2)=(x_1, x_1x_2, x_1x_4, 2x_4x_5-x_3, x_5)
\]
and
\[
\Phi(s^{-1})=(x_1, x_2x_1^{-1}, 2x_3x_5x_1^{-1}-x_4, x_3x_1^{-1}, x_5).
\]
One calculates
\[
sf_n^\lambda s^{-1}=(x_1, x_2+\lambda^2x_1^{n+1}+\lambda x_3(2x_5-x_1)A_{n-1}-\lambda x_4x_1A_n, x_3+\lambda x_1B_n,
\]
\[
x_4-\lambda(2x_5B_n-x_1B_{n-1}).
\]
This shows by induction that
\[
\Phi(f^\lambda_n)=(x_1, x_2+\lambda^2x_1^n+\lambda x_3A_n-\lambda x_4x_1A_{n-1}, x_3+\lambda x_1B_{n-1}, x_4+\lambda B_n, x_5),
\]
where
\[
A_n=2x_5A_{n-1}-x_1A_{n-2}, A_0=0, A_1=2;
\]
\[
B_n=2x_5B_{n-1}-x_1B_{n-2}, B_0=1, B_1=x_5.
\]
These recursive formulas have the following closed form:
\[
A_n=\frac{ \left(  x_5+\sqrt {{{ x_5}}^{2}-{ x_1}}
 \right) ^{n}- \left( { x_5}-\sqrt {{{ x_5}}^{2}-{ x_1}}
 \right) ^{n}}{\sqrt {{{ x_5}}^{2}-{ x_1}}},
\]
\[
B_n=\frac{1}{2}\left( x_{{5}}-\sqrt {{x_{{5}}}^{2}-x_{{1}}} \right) ^{n}+1/2\,
 \left( x_{{5}}+\sqrt {{x_{{5}}}^{2}-x_{{1}}} \right) ^{n}.
\]
The claim follows.
\end{proof}

Since $\aff_n$ together with all the elements $f_n^\lambda$, $n\in\N$, $\lambda\neq 0$ generates $\aut(\A^2)$, Lemma \ref{jonqim} shows that $\Phi(\aut(\A^2))$ is contained in $\aut(\A^5)$ and thus claim (6) of Theorem \ref{gizatullintheorem}.

\begin{lemma}\label{AnBm}
Let $n$ and $m$ be positive integers and $A_n, B_m$ as in Lemma~$\ref{jonqim}$. Then
\[
A_nB_{m-1}-A_{n-1}B_m=P(x_1,x_5),
\]
where $P\in\C[x_1,x_5]$ is a polynomial of degree $<\max\{m,n\}$.
\end{lemma}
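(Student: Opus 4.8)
The plan is to exploit the closed forms for $A_k$ and $B_k$ established in Lemma~\ref{jonqim}. Working in the ring obtained from $\C[x_1,x_5]$ by adjoining a formal square root $\delta$ with $\delta^2=x_5^2-x_1$, set $\alpha=x_5+\delta$ and $\beta=x_5-\delta$, so that $\alpha+\beta=2x_5$, $\alpha\beta=x_1$ and $\alpha-\beta=2\delta$. By Lemma~\ref{jonqim} we then have $A_k=(\alpha^k-\beta^k)/\delta$ and $B_k=(\alpha^k+\beta^k)/2$ for every $k$.

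First I would substitute these expressions into $A_nB_{m-1}-A_{n-1}B_m$ and expand the two products. The terms $\alpha^{n+m-1}$ and $\beta^{n+m-1}$ occur with the same sign in both products and cancel, leaving four monomials. Grouping them as $(\alpha^n\beta^{m-1}-\alpha^{n-1}\beta^m)+(\alpha^m\beta^{n-1}-\alpha^{m-1}\beta^n)$ and pulling the factor $(\alpha-\beta)=2\delta$ out of each pair cancels the overall prefactor $1/(2\delta)$, yielding the clean identity
\[
A_nB_{m-1}-A_{n-1}B_m=\alpha^{n-1}\beta^{m-1}+\alpha^{m-1}\beta^{n-1}.
\]
The square root has disappeared, and in particular the right-hand side is symmetric in $n$ and $m$.

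Next I would use $\alpha\beta=x_1$ to extract the common power. Assuming without loss of generality that $n\geq m$, the right-hand side equals $(\alpha\beta)^{m-1}(\alpha^{n-m}+\beta^{n-m})=2x_1^{m-1}B_{n-m}$, which manifestly lies in $\C[x_1,x_5]$; the case $m>n$ gives $2x_1^{n-1}B_{m-n}$ by the same computation. Finally, an immediate induction on the recursion $B_k=2x_5B_{k-1}-x_1B_{k-2}$ with $B_0=1,\ B_1=x_5$ shows that $B_k$ has total degree $k$ in $\C[x_1,x_5]$, so that $\deg(2x_1^{m-1}B_{n-m})=(m-1)+(n-m)=n-1=\max\{m,n\}-1$, which gives the required bound.

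Since this is a computational identity, there is no serious obstacle; the only point needing care is the bookkeeping in the expansion that exhibits the cancellation of $\delta$, together with the observation that the resulting symmetric expression is genuinely a polynomial of the stated total degree. An alternative route that avoids the formal square root is to prove the unified identity $A_nB_{m-1}-A_{n-1}B_m=2x_1^{\min\{m,n\}-1}B_{|n-m|}$ directly by induction, using only the two linear recurrences $A_k=2x_5A_{k-1}-x_1A_{k-2}$ and $B_k=2x_5B_{k-1}-x_1B_{k-2}$; this is a Casoratian-type relation for two solutions of the same second-order recurrence and delivers the degree bound in exactly the same manner.
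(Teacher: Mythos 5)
Your proof is correct, but it takes a genuinely different route from the paper's. The paper's own argument is a two-line induction working only with the recurrences: expanding via $A_n=2x_5A_{n-1}-x_1A_{n-2}$ and $B_m=2x_5B_{m-1}-x_1B_{m-2}$ gives the Casoratian-type reduction $A_nB_{m-1}-A_{n-1}B_m=x_1\bigl(A_{n-1}B_{m-2}-A_{n-2}B_{m-1}\bigr)$, and the base cases $n=1$ or $m=1$ follow from $A_0=0$, $A_1=2$, $B_0=1$, $B_1=x_5$ together with $\deg A_k=k-1$, $\deg B_k=k$; this is precisely the ``alternative route'' you sketch in your closing paragraph. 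Your main argument instead exploits the Binet-type closed forms (which appear in the proof of Lemma \ref{jonqim}, and which indeed follow from the binomial-sum expressions in its statement by expanding $(x_5\pm\delta)^k$ and separating even and odd powers of $\delta$), working in the extension of $\C[x_1,x_5]$ by $\delta$ with $\delta^2=x_5^2-x_1$ --- legitimate since $x_5^2-x_1$ is not a square in $\C[x_1,x_5]$, so this extension is an integral domain containing $\C[x_1,x_5]$. Your expansion and the factorization of $\alpha-\beta=2\delta$ are correct, and the resulting identity $A_nB_{m-1}-A_{n-1}B_m=2x_1^{\min\{m,n\}-1}B_{|n-m|}$ is strictly more information than the paper extracts: it identifies $P$ exactly, exhibits the non-obvious symmetry in $m$ and $n$ (which is what justifies your ``without loss of generality''), and makes the degree count $(\min\{m,n\}-1)+|n-m|=\max\{m,n\}-1$ transparent. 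The trade-off is clear: the paper's induction is shorter and needs no ring extension, while your computation is sharper and explains where the polynomial $P$ actually comes from.
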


\begin{proof}
If $n=1$ or $m=1$ the claim is true, since $A_0=0, A_1=2, B_0=1,B_1=x_5$ and $\deg(A_k)=k-1, \deg(B_k)=k$. By the identities from Lemma \ref{jonqim}, one obtains
\begin{align*}
A_nB_{m-1}-A_{n-1}B_m& =(2x_5A_{n-1}-x_1A_{n-2})B_{m-1}-A_{n-1}(2x_5B_{m-1}-x_1B_{m-2})\\
&=x_1(A_{n-1}B_{m-2}-A_{n-2}B_{m-1}).
\end{align*}
The claim follows by induction on $m$ and $n$.
\end{proof}

\begin{lemma}\label{imj}
Let
\[
f=f_{1}^{\lambda_1}f_{2}^{\lambda_2}\cdots f_{n}^{\lambda_n}, \text{ where } \lambda_n\neq 0.
\]
Then 
\[
\Phi(f)=(x_1, x_2+F, x_3+p_3(x_1, x_5)+\lambda_n x_1B_{n-1}, 
x_4+p_4(x_1, x_5)+\lambda_n B_n, x_5),
\]
where $F=p_2(x_1,x_5)+x_3(\lambda_1 A_1+\dots+\lambda_n A_n)-x_4x_1(\lambda_1A_{n-1}+\cdots+\lambda_nA_n)$ and $p_2, p_3, p_4\in\C[x_1,x_5]$ are polynomials of degree $\leq n$. In particular, $\deg(\Phi(f))=\deg(f)$.
\end{lemma}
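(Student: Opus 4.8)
The plan is to use that $\Phi$ is a group homomorphism together with the explicit formula of Lemma~\ref{jonqim}. Since $f_i^{\lambda_i}=(X,Y+\lambda_i X^i)$ one has $f=(X,Y+\sum_{i=1}^n\lambda_i X^i)$, so $\deg(f)=n$ because $\lambda_n\neq 0$. On the other hand $\Phi(f)=\Phi(f_1^{\lambda_1})\circ\cdots\circ\Phi(f_n^{\lambda_n})$, and each factor is given by Lemma~\ref{jonqim}. The first observation to record is that every $\Phi(f_i^{\lambda_i})$ fixes $x_1$ and $x_5$, translates the pair $(x_3,x_4)$ by the vector $(\lambda_i x_1 B_{i-1},\,\lambda_i B_i)$ whose entries lie in $\C[x_1,x_5]$, and shears $x_2$ by $\lambda_i A_i\,x_3-\lambda_i x_1 A_{i-1}\,x_4+\lambda_i^2 x_1^i$. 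Treating $x_1,x_5$ as scalars, each $\Phi(f_i^{\lambda_i})$ is thus an affine unipotent transformation of $(x_2,x_3,x_4)$.

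Next I would encode these maps as $4\times 4$ upper unipotent matrices $M_i$ acting on $(x_2,x_3,x_4,1)$, so that $\Phi(f)$ restricted to $(x_2,x_3,x_4)$ is the product $M_1\cdots M_n$. Writing $N_i=M_i-I$ one checks that $N_iN_jN_k=0$, so the product collapses to $I+\sum_i N_i+\sum_{i<j}N_iN_j$. Reading off the entries gives at once that $x_3$ is translated by $x_1\sum_i\lambda_i B_{i-1}$, that $x_4$ is translated by $\sum_i\lambda_i B_i$, and that $x_2$ is sent to $x_2+(\sum_i\lambda_i A_i)x_3-(x_1\sum_i\lambda_i A_{i-1})x_4+p_2$, where the accumulated constant is
\[
p_2=\sum_i\lambda_i^2 x_1^i+\sum_{i<j}\lambda_i\lambda_j\,x_1\bigl(A_iB_{j-1}-A_{i-1}B_j\bigr).
\]
Separating the top index $i=n$ in the two translations isolates the summands $\lambda_n x_1 B_{n-1}$ and $\lambda_n B_n$ of the statement and identifies $p_3=x_1\sum_{i<n}\lambda_iB_{i-1}$ and $p_4=\sum_{i<n}\lambda_iB_i$, each visibly of degree $\le n-1$ in $\C[x_1,x_5]$.

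The main point is the degree bound on $p_2$, and here Lemma~\ref{AnBm} is exactly what is needed: for $i<j$ it gives $A_iB_{j-1}-A_{i-1}B_j\in\C[x_1,x_5]$ of degree $<j\le n$, so each cross term $x_1(A_iB_{j-1}-A_{i-1}B_j)$ has degree $\le n$, while $x_1^i$ has degree $i\le n$; hence $\deg(p_2)\le n$. Without this cancellation the individual products $A_iB_{j-1}$ would have degree up to $i+j-2$, which can exceed $n$, so isolating Lemma~\ref{AnBm} beforehand is precisely what makes the estimate go through. This establishes the displayed form of $\Phi(f)$ with $p_2,p_3,p_4\in\C[x_1,x_5]$ of degree $\le n$.

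It remains to compute $\deg(\Phi(f))$. Since $\Phi(f)\in\aut(\A^5)$ is a polynomial automorphism (Lemma~\ref{jonqim}), its degree as an element of $\Cr_5$ equals the maximum $d$ of the degrees of its affine components: homogenising each component to degree $d$, the homogenisation of a component of degree exactly $d$ is not divisible by $x_0$, so $x_0^d$ and the homogenised components have no common factor. Using $\deg A_k=k-1$ and $\deg B_k=k$ one checks that every component of $\Phi(f)$ has degree $\le n$, and that the $x_3$-component carries the degree-$n$ term $\lambda_n x_1 B_{n-1}$, whose leading part is a nonzero multiple of $x_1x_5^{\,n-1}$; this survives because $\lambda_n\neq 0$ while the remaining terms $x_1B_{i-1}$ ($i<n$) have degree $<n$. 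Therefore $\max_i\deg(F_i)=n$ and $\deg(\Phi(f))=n=\deg(f)$. I expect the only genuinely delicate step to be the degree estimate for $p_2$, which is why the algebraic identity of Lemma~\ref{AnBm} is established separately.
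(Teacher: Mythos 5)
Your proof is correct, and it rests on exactly the same two pillars as the paper's own argument: the explicit formula of Lemma~\ref{jonqim} for each factor $\Phi(f_i^{\lambda_i})$, and the cancellation of Lemma~\ref{AnBm} to control the cross terms $x_1(A_iB_{j-1}-A_{i-1}B_j)$. The difference is purely organizational: the paper peels off the last factor and runs an induction on the number of factors, so that at each step only the cross terms $x_1\sum_{k}\lambda_k(A_kB_{n-1}-A_{k-1}B_n)$ appear and are absorbed into $p_2$, whereas you expand the whole composition in one shot by viewing each factor as a unipotent affine map in $(x_2,x_3,x_4)$ over $\C[x_1,x_5]$ and exploiting $N_iN_jN_k=0$. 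This buys two things the paper leaves implicit: an explicit closed formula $p_2=\sum_i\lambda_i^2x_1^i+\sum_{i<j}\lambda_i\lambda_j\,x_1(A_iB_{j-1}-A_{i-1}B_j)$, which the paper never writes down, and an actual justification of the final claim $\deg(\Phi(f))=\deg(f)$ --- the paper asserts it without comment, while you verify that the Cremona degree of a polynomial automorphism equals the maximal affine component degree and that the degree-$n$ term $\lambda_n x_1B_{n-1}$ in the third coordinate cannot cancel since $\lambda_n\neq 0$. One further remark: your computation gives the coefficient of $x_4$ as $-x_1(\lambda_1A_0+\cdots+\lambda_nA_{n-1})$, which agrees with the inductive hypothesis displayed inside the paper's proof; the indexing $\lambda_1A_{n-1}+\cdots+\lambda_nA_n$ in the lemma's statement is an evident typo, and you were right to correct it silently.
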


\begin{proof}
It is easy to see that the third and fourth coordinate of $\Phi(f)$ have the claimed form. The more difficult part is the second coordinate.

For $n=1$ the claim follows directly from Lemma \ref{jonqim}. We proceed now by induction. Let $\lambda_{n+1}\neq 0$ and $m$ be the largest number, such that $m\leq n$ and $n\neq 0$. By the induction hypothesis we may assume that the second coordinate of $\Phi(f_{1}^{\lambda_1}f_{2}^{\lambda_2}\cdots f_{m}^{\lambda_m})$ has the form
\[
x_2+p_2(x_1,x_5)+x_3(\lambda_1 A_1+\dots+\lambda_m A_m)-x_4x_1(\lambda_1A_{0}+\cdots+\lambda_mA_{m-1}).
\]
The second coordinate of $\Phi(f_{1}^{\lambda_1}f_{2}^{\lambda_2}\cdots f_{m}^{\lambda_m})\circ\Phi(f_n^{\lambda_n})$ is therefore
\[
x_2+p_2(x_1,x_5)+x_3(\lambda_1 A_1+\dots+\lambda_m A_m+\lambda_nA_n)-x_4x_1(\lambda_1A_{0}+\cdots+\lambda_mA_{m-1}+\lambda_nA_n)+
\]\[
x_1\sum_{k=1}^m\lambda_k(A_kB_{n-1}-A_{k-1}B_n).
\]
By Lemma \ref{AnBm}, $x_1\sum_{k=1}^m\lambda_k(A_kB_{n-1}-A_{k-1}B_n)$ is a polynomial in $x_1$ and $x_5$ of degree $\leq n$.
\end{proof}

\begin{proof}[Proof of Theorem $\ref{degreeaut}$]
The first claim was proved in Proposition \ref{degreegeq}.

For the second part it suffices by the remark above on the amalgamated product structure to show that $\deg(\Phi(f))=\deg(f)$ for all elements $f\in\J_2$. Composition with an element in $\aff_2$ doesn't change the degree. So it is enough to consider elements in $\J_2$ of the form 
$f=(X, Y+P(X)), P\in\C[X]$.
For suitable $\lambda_i\in\C$ we have
$f=f_{1}^{\lambda_1}f_{2}^{\lambda_2}\cdots f_{n}^{\lambda_n}, \text{ where } \lambda_n\neq 0.$
In Lemma \ref{imj} we've seen that $\Phi$ preserves the degree of these elements.
\end{proof}

\subsection{Irreducibility of $\Phi$, $\Psi_1$ and $\Psi_2$}\label{irredsection}

First we show that $\Phi$ is irreducible. Assume that there is a rational dominant map $\pi\colon\p^5\dashrightarrow M$ to a variety $M$ with an algebraic embedding $\varphi_M\colon \Cr_2\to\Bir(M)$ such that $A$ is $\Cr_2$-equivariant. Since $\varphi_M$ is algebraic, we may assume that $\PGL_3(\C)$ acts regularly on $M$. We obtain that the restriction of $A$ to the open $\PGL_3(\C)$-invariant subset $U\subset\p^5$ consisting of all smooth conics is a $\PGL_3(\C)$-equivariant morphism, whose image is an open dense subset of $M$ on which $\PGL_3(\C)$ acts transitively. Note that this implies $\dim(M)>1$. 

If $\dim M=2$, we obtain by Theorem \ref{thmserge} that $M\simeq \p^2$ with the standard action of $\PGL_3(\C)$.  The stabilizer  in $\PGL_3(\C)$ of a point in $U\subset\p^5$ is isomorphic to $\SO_3(\C)$. On the other hand the stabilizer in $\PGL_3(\C)$ of a point in $\p^2$ is isomorphic to the group of affine transformations $\aff_2=\GL_2(\C)\ltimes \C^2$. Since $\SO_3(\C)$ can not be embedded into $\aff_2$, the case $\dim(M)=2$ is not possible. 

If $\dim(M)=3$, we find, by Theorem \ref{pglmain}, a $\PGL_3(\C)$-equivariant projection $M\dashrightarrow\p^2$ and are again in the case $\dim(M)=2$.

If $\dim(M)=4$, let $p\in M$ be a general point and $F_p:=A^{-1}(p)\subset \p^5$ the fiber of $A$. Let $q\in F_p$ be a point that is only contained in one connected component $C$ of $F_p$. Again, the stabilizer of $q$ is isomorphic to $\SO_3(\C)$. This implies that $\SO_3(\C)$ acts regularly on the curve $C$ with a fixpoint. The group of birational transformations of $C$ is isomorphic to $\PGL_2(\C)$, is abelian or is finite. In all cases we obtain that the connected component of the identity $\SO_3(\C)^0$ fixes $C$ pointwise. In other words, the group $\SO_3(\C)^0$ preserves each conic of the family of conics in $\p^2$ parametrized by $C$. This is not possible.

The proof that $\Psi_1$ and $\Psi_2$ are irreducible is done analogously.


\section{$\PGL_{n+1}(\C)$-actions in codimension 1}\label{pglnactions}
In this section we look at algebraic embeddings of $\PGL_{n+1}(\C)$ into $\Bir(M)$ for complex projective varieties $M$ of dimension $n+1$. Our aim is to prove Theorem~\ref{pglmain}.

\begin{theorem}\label{pglmain}
Let $n\geq 2$ and let $M$ be a smooth projective variety of dimension $n+1$ with a non-trivial $\PGL_{n+1}(\C)$-action. Then, up to birational conjugation and automorphisms of $\PGL_{n+1}(\C)$, we have one of the following:
\begin{enumerate}
\item $M\simeq F_l=\p(\mathcal{O}_{\p^n}\oplus\mathcal{O}_{\p^n}(-l(n+1))$ for a unique element $l\in\Z_{\geq 0}$ and $\PGL_{n+1}(\C)$ acts as in Example $\ref{defl}$. 
\item $M\simeq\p^n\times C$ for a unique smooth curve $C$ and $\PGL_{n+1}(\C)$ acts on the first factor as in Example $\ref{product}$.
\item $M\simeq\p(T\p^2)$ and $\PGL_{3}$ acts as in Example $\ref{defb}$.
\item $M\simeq\mathbb{G}(1,3)$ and $\PGL_4(\C)$ acts as in Example $\ref{grass}$.
\end{enumerate}
Moreover, these actions are not birationally conjugate to each other.
\end{theorem}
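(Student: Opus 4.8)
The plan is to reduce to a regular action and then stratify by the dimension of a generic orbit. By Weil's regularization theorem we may replace $M$ by a smooth projective birational model on which $\PGL_{n+1}(\C)$ acts biregularly; this is harmless since the statement is up to birational conjugation. Let $d$ denote the dimension of a generic orbit. Because $\PGL_{n+1}(\C)$ is simple, every positive-dimensional orbit is a faithful homogeneous space, so by Theorem \ref{thmserge} its dimension is at least $n$; as $\dim M=n+1$ we conclude $d\in\{n,n+1\}$. These two values record whether the field of $\PGL_{n+1}(\C)$-invariant rational functions on $M$ has transcendence degree $1$ or $0$, and I treat them separately.

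Suppose first $d=n$. Then the rational quotient is a curve $C$, and a generic fibre is an $n$-dimensional orbit. By Theorem \ref{thmserge}, together with the remark that every algebraic automorphism of $\PGL_{n+1}(\C)$ is inner composed with $\alpha\colon g\mapsto {}^tg^{-1}$, each such orbit is $\PGL_{n+1}(\C)$-equivariantly isomorphic to $\p^n$ with the standard action, after composing the embedding with $\alpha$ if necessary. Thus $M$ is birationally a $\p^n$-fibration over $C$ on which $\PGL_{n+1}(\C)$ acts fibrewise as the full automorphism group and trivially on $C$. Over the function field of $C$ the fibration is Zariski-locally trivial by Tsen's theorem, and its transition functions must commute with all of $\PGL_{n+1}(\C)\simeq\aut(\p^n)$; since the centraliser of $\PGL_{n+1}(\C)$ in $\aut(\p^n)$ is its trivial centre, the transition functions are trivial and $M\simeq\p^n\times C$, which is case (2) (with $C\simeq\p^1$ recovering $F_0$).

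Now suppose $d=n+1$, so $M$ is an equivariant completion of a dense open orbit $\Omega\simeq\PGL_{n+1}(\C)/H$ with $\dim H=(n+1)^2-1-(n+1)=n^2+n-1$. If the action is transitive, then $M$ is a flag manifold $\PGL_{n+1}(\C)/P$ of dimension $n+1$; enumerating the dimensions of partial flag manifolds shows such a manifold exists only for $n=2$, where it is the incidence variety $\p(\T\p^2)\simeq\PGL_3/B$ (case (3)), and for $n=3$, where it is $\mathbb{G}(1,3)$ (case (4)), since for $n\geq 4$ the smallest nontrivial flag manifolds beyond $\p^n$ already have dimension $\geq 2n-2>n+1$. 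If instead the boundary $M\setminus\Omega$ is nonempty, its orbits have dimension $\leq n$, and each $n$-dimensional orbit is a copy of $\p^n=\PGL_{n+1}(\C)/P$ with $P$ a maximal parabolic; viewing $\Omega$ as a $\C^*$-bundle $\Omega\to\PGL_{n+1}(\C)/P=\p^n$, with $P/H\simeq\C^*$ the kernel of a character of $P$, I identify $M$ with the fibrewise $\p^1$-completion $\p(\mathcal{O}_{\p^n}\oplus L)$ of the associated line bundle $L$. Since $L$ is $\PGL_{n+1}(\C)$-linearised and $\operatorname{Pic}(\p^n)=\Z\langle\mathcal{O}(1)\rangle$ with $\mathcal{O}(k)$ admitting such a linearisation exactly when $(n+1)\mid k$, we obtain $L=\mathcal{O}_{\p^n}(-l(n+1))$ and hence $M\simeq F_l$ acting as in Example \ref{defl}, the power $l$ of the Jacobian reflecting $K_{\p^n}=\mathcal{O}(-(n+1))$; this is case (1). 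The main obstacle is precisely this last subcase: verifying that a completion with nonempty boundary is exactly the $\p^1$-bundle $F_l$ and that its linearised line bundle is forced to be $\mathcal{O}(-l(n+1))$. I expect to handle it through the local structure of $M$ along a closed orbit $Z\simeq\p^n$, whose equivariant normal bundle is a linearised line bundle on $\p^n$ and hence of degree divisible by $n+1$, together with the classification of smooth equivariant completions of a $\C^*$-bundle over $\p^n$.

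Finally, for pairwise non-conjugacy I invoke invariants of the rational action. The generic-orbit dimension $d$ separates $\p^n\times C$ (where $d=n$) from the open-orbit cases $F_l\ (l\geq 1)$, $\p(\T\p^2)$ and $\mathbb{G}(1,3)$ (where $d=n+1$); within the product case the base curve is the rational quotient and so a conjugacy invariant, whence distinct $C$ give non-conjugate actions, exactly as in Lemma \ref{birstab}. Among the open-orbit cases, $\p(\T\p^2)$ and $\mathbb{G}(1,3)$ occur for different values of $n$ and are transitive, whereas every $F_l$ with $l\geq 1$ has nonempty boundary; to separate the $F_l$ for different $l$ I record that the generic stabiliser is the preimage $\chi^{-1}(\mu_l)$ of the $l$-torsion under the Jacobian character $\chi\colon P\to\C^*$, whose number of connected components grows with $l$, and the isomorphism type of the generic stabiliser is a conjugacy invariant of the action. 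This completes the classification.
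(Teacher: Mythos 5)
Your overall strategy differs from the paper's: the paper quotes the classification of the underlying varieties (Theorem \ref{pglvar}, taken from the literature, where it is proved using GIT and Freudenthal's theory of ends) and then concentrates on showing, via group cohomology, that on each such variety a $\PGL_{n+1}(\C)$-action exists only when $(n+1)\mid k$ and is then unique up to conjugation and automorphisms of $\PGL_{n+1}(\C)$; you instead attempt to reprove the variety classification from scratch by stratifying by the generic orbit dimension. Several of your steps are sound: the reduction via Weil regularization, the bound $d\in\{n,n+1\}$, the flag-manifold enumeration giving $\p(\T\p^2)$ and $\mathbb{G}(1,3)$ in the transitive case, and your non-conjugacy argument via generic stabilizers (the $l$ connected components of $\chi^{-1}(\mu_l)$) is a legitimate alternative to the paper's argument, which instead uses Lemma \ref{exci} to show that equivariant birational maps between the listed models are isomorphisms. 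In the case $d=n$, your Tsen step needs repair: Tsen trivializes the Brauer--Severi fibration, but not equivariantly, so ``the transition functions commute with $\PGL_{n+1}(\C)$'' is not automatic. It is fixable, e.g.\ because the generic stabilizer is a maximal parabolic $P$ with $N_G(P)=P$, which yields a rational section of the quotient map, or because the induced homomorphism $\PGL_{n+1}(\C)\to\PGL_{n+1}(\C(C))$ is at a general point of $C$ an inner automorphism composed possibly with $\alpha$, by a uniquely determined element, hence is conjugate over $\C(C)$ to the constant embedding.

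The genuine gap is the subcase you yourself flag: $d=n+1$ with nonempty boundary, which is precisely case (1) of the theorem, i.e.\ the infinite family $F_l$ and the bulk of the statement. You assert, but do not prove, that (i) the open orbit $G/H$ fibres equivariantly over $\p^n$ with fibre $\C^*$, i.e.\ that $H$ lies in a maximal parabolic $P$ with $P/H\simeq\C^*$ cut out by a power of the Jacobian character, and (ii) that a smooth equivariant completion of this $\C^*$-bundle must be the fibrewise $\p^1$-completion $\p(\mathcal{O}_{\p^n}\oplus L)$. Neither follows from what you wrote: a priori a codimension-$(n+1)$ subgroup $H$ need not be contained in any maximal parabolic, and establishing this requires genuine input from the completion (existence of a closed orbit in the boundary, the fact that it is a smooth invariant divisor isomorphic to $\p^n$, and a local structure theorem for the action along it); similarly, ruling out other smooth equivariant completions in (ii) uses smoothness of $M$ in an essential way. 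Your normal-bundle remark only yields the divisibility $(n+1)\mid\deg N_{Z/M}$, not the global structure of $M$. This missing content is exactly what the paper imports wholesale as Theorem \ref{pglvar}; writing ``I expect to handle it'' leaves the central case of the classification unproven, so as it stands the proposal does not establish the theorem.
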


\begin{remark}
If $M$ is rational and of dimension 2 or 3, this result can be deduced directly from the classification of maximal algebraic subgroups of $\Cr_n$ by Enriques, Umemura and Blanc (\cite{enriques1893sui}, \cite{MR683251}, \cite{MR803342}, \cite{MR676586}, \cite{MR2504924}).
\end{remark}

\subsection{Classification of varieties and groups of automorphisms}
With some geometric invariant theory and using results of Freudenthal about topological ends, the following classification can be made (see \cite{MR3014483} and the references in there):

\begin{theorem}\label{pglvar}
Let $M$ be a smooth projective variety of dimension $n+1$ with an action of $\PGL_{n+1}(\C)$, where $n\geq 2$. Then we are in one of the following cases:
\begin{enumerate}
\item $M\simeq\p(\mathcal{O}_{\p^n}\oplus\mathcal{O}_{\p^n}(-k))$ for some $k\in\Z_{\geq 0}$. 
\item $M\simeq\p^n\times C$ for a curve $C$ of genus $\geq 1$.
\item $M\simeq\p(T\p^2)\simeq\PGL_3(\C)/ B$, where $P\subset\PGL_3(\C)$ is a maximal Borel subgroup.
\item $M\simeq\mathbb{G}(1,3)\simeq\PGL_4(\C)/P$, where $P\subset\PGL_4(\C)$ is the parabolic subgroup consisting of matrices of the form
\[
\left( \begin{array}{cccc} 
* & *&*&* \\
*& *&*&*\\
0&0&*&*\\
0&0&*&*\\
 \end{array}\right).
\]
\end{enumerate}
\end{theorem}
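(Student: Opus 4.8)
The plan is to study the action orbit by orbit, exploiting that $\dim M=n+1$ is tiny compared with $\dim\PGL_{n+1}(\C)=n^2+2n$, so that point stabilisers are forced to be very large. First I would show that every orbit has dimension $0$, $n$ or $n+1$: the closure of a positive-dimensional orbit is a projective $\PGL_{n+1}(\C)$-variety, so by Theorem \ref{thmserge} it has dimension at least $n$, and an orbit of dimension $n+1=\dim M$ is automatically dense and open in the irreducible variety $M$. A fixed point $p$ would give a linear action of $\PGL_{n+1}(\C)$ on the $(n+1)$-dimensional tangent space $\T_pM$; but for $n\geq 2$ the group $\PGL_{n+1}(\C)$ has no non-trivial representation of dimension $\leq n+1$ (the smallest non-trivial irreducible representation is the adjoint, of dimension $n^2+2n$, and any non-trivial representation contains a non-trivial irreducible constituent), so the isotropy representation is trivial and, by linearisation at $p$, the action is trivial near $p$, hence everywhere, contradicting non-triviality. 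Thus every orbit has dimension $n$ or $n+1$, and I split into two cases according to whether an open orbit exists.

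Suppose first that there is \emph{no} open orbit. Then every orbit is closed of dimension $n$ and, again by Theorem \ref{thmserge}, is isomorphic to $\p^n$ with the standard $\PGL_{n+1}(\C)$-action (up to a field automorphism of $\C$). Hence the orbit space $C=M/\PGL_{n+1}(\C)$ is a smooth projective curve and $M\to C$ is a $\p^n$-bundle. I would then observe that $\PGL_{n+1}(\C)$ acts trivially on $C$: a connected simple group of dimension $n^2+2n\geq 8$ admits no non-trivial morphism to $\aut(C)$, which is finite for $g(C)\geq 2$, an extension of a finite group by an abelian variety for $g(C)=1$, and $\PGL_2(\C)$ for $g(C)=0$. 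Therefore the group preserves each fibre and acts there as the full $\aut(\p^n)$, which forces the projective bundle to be trivial, so $M\simeq\p^n\times C$ equivariantly. If $g(C)=0$ this is $\p^n\times\p^1\simeq F_0$, case (1); if $g(C)\geq 1$ it is case (2).

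Suppose now that the action is \emph{almost homogeneous}, with open orbit $G/H$, where $G=\PGL_{n+1}(\C)$ and $H$ has codimension $n+1$, that is $\dim H=n^2+n-1$. The crucial observation is a dimension count: the largest reductive subgroups of $G$ are its Levi factors, of dimension at most $n^2<n^2+n-1$, so $H$ is not reductive and therefore has non-trivial unipotent radical and lies in a proper parabolic subgroup. Comparing with the dimensions of the parabolics (the two largest being the point-stabilisers $P_1\cong P_n$, of dimension $n^2+n$) leaves only two possibilities. Either $H$ is itself parabolic, and then $M=G/H$ is a homogeneous flag variety of dimension $n+1$; the only such are the full flag variety $\PGL_3(\C)/B\simeq\p(\T\p^2)$ for $n=2$ (case (3)) and $\mathbb{G}(1,3)=\PGL_4(\C)/P_2$ for $n=3$ (case (4)), while for $n\geq 4$ no flag variety has dimension $n+1$, so this branch drops out. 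Or $H$ has codimension one in a point-stabiliser $P_1$, that is $H$ is the kernel of a non-trivial character $P_1\to\C^*$; then the open orbit $G/H$ is a $\C^*$-bundle over $G/P_1\simeq\p^n$, whose smooth projective equivariant completion is the $\p^1$-bundle $\p(\mathcal{O}_{\p^n}\oplus\mathcal{O}_{\p^n}(-k))$ of case (1).

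The step I expect to be the main obstacle is this last reconstruction: passing from the abstract open orbit $G/H$ to the exact global geometry of its smooth projective completion. One must prove that the boundary $M\setminus(G/H)$ consists of precisely two disjoint $n$-dimensional orbits, each a section isomorphic to $\p^n$, and that the total space is the $\p^1$-bundle $\p(\mathcal{O}_{\p^n}\oplus\mathcal{O}_{\p^n}(-k))$ with $k$ read off from the weight of the character. This is exactly where the cited structural input is needed: the theory of topological ends of homogeneous spaces (the $\C^*$-bundle $G/H$ has two ends, which forces the two boundary sections) together with the geometric invariant theory description of smooth equivariant completions. By contrast, the subgroup dimension bookkeeping and the elementary analysis of the two genus subcases are comparatively routine.
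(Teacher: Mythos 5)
The paper offers no proof of Theorem \ref{pglvar}: the result is quoted as known, with the remark that it ``can be made'' with some geometric invariant theory and Freudenthal's results on topological ends, citing \cite{MR3014483} and the references therein. Your proposal therefore cannot be matched against an internal argument; what it does is reconstruct the classification, along exactly the lines that citation indicates, and the skeleton is sound. The exclusion of fixed points (no non-trivial representation of $\PGL_{n+1}(\C)$ of dimension $\leq n+1$), the lower bound $n$ on the dimension of positive-dimensional orbits via Theorem \ref{thmserge} (apply it to a smooth projective birational model of the orbit closure, which may be singular --- harmless, since that theorem concerns homomorphisms into $\Bir$), the dichotomy on the existence of an open orbit, the Borel--Tits reduction of a non-reductive stabilizer to a parabolic together with the bookkeeping that singles out $\PGL_3(\C)/B$ and $\mathbb{G}(1,3)$, and the identification of the remaining stabilizers as character kernels in $P_1$ or $P_n$: all of this is correct. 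The two steps you defer --- the geometric quotient in the fibered case, and the reconstruction of $M$ from the two ends of the $\C^*$-bundle $G/H\to\p^n$ --- are precisely the GIT and Freudenthal inputs the paper cites. One useful supplement: once some equivariant completion $\p(\mathcal{O}_{\p^n}\oplus\mathcal{O}_{\p^n}(-k))$ is produced, it must coincide with the given $M$, because an equivariant birational map between the two models has invariant indeterminacy and exceptional loci, while there are no invariant subsets of dimension $<n$ (cf.\ Lemma \ref{exci}).

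Two supporting claims need repair, though neither is fatal. First, ``the largest reductive subgroups of $G$ are its Levi factors, of dimension at most $n^2$'' is false for $n=3$: the image of $\operatorname{Sp}_4(\C)$ in $\PGL_4(\C)$ is reductive of dimension $10>9$. Your conclusion survives, since $10<n^2+n-1=11$, but the honest count must bound all maximal reductive subgroups --- $S(\GL_k\times\GL_{n+1-k})$, $\SO_{n+1}(\C)$, $\operatorname{Sp}_{n+1}(\C)$ when $n+1$ is even, and images of irreducible representations --- and the symplectic case is the only tight one. Second, in the case without open orbit, equivariant triviality of the $\p^n$-bundle does not follow formally from ``acts as the full $\aut(\p^n)$ on each fibre''; one needs, for instance, the section $s\colon C\to M$ cut out by the fixed points of the unipotent radical $U\subset P_1$ (one point in each fibre, with stabilizer $N_G(U)=P_1$ independent of the fibre), after which $(gP_1,c)\mapsto g\cdot s(c)$ is a bijective morphism $\p^n\times C\to M$, hence an isomorphism in characteristic zero; the existence of the quotient curve itself is part of the GIT input. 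With these patches your outline is a genuine proof, modulo the same external results on which the paper relies wholesale.
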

The connected components $\aut^0(M)$ of the automorphism groups of the varieties $M$ that appear in Theorem \ref{pglvar} are well known. Proofs of the following Proposition can be found in \cite{MR1334091}.

\begin{proposition}\label{autogroup}
We have
\begin{itemize}
\item 
$\aut^0(\p(\mathcal{O}_{\p^n}\oplus\mathcal{O}_{\p^n}(-k))\simeq (\GL_{n+1}(\C)/\mu_k)\rtimes\C [x_0,\dots,x_n]_k$, where \newline $\C [x_0,\dots,x_n]_k$ denotes the additive group of homogeneous polynomials of degree $k$ and $\mu_k\subset\C^*$ the group of all elements $c\in\C^*$ satisfying $c^k=1$,
\item $\aut^0(\p^n\times C)\simeq \PGL_{n+1}(\C)\times \aut^0(C)$,
\item $\aut^0(\p(T\p^2))\simeq \PGL_3(\C)$,
\item $\aut^0(\mathbb{G}(1,3))\simeq \PGL_4(\C)$.
\end{itemize}
\end{proposition}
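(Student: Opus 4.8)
The plan is to treat the four varieties separately, in each case singling out a canonical geometric structure that $\aut^0(M)$ must preserve and then reducing the computation to a classical one; throughout I use that $\aut^0(M)$ is a connected algebraic group acting regularly on the smooth projective variety $M$, so that it suffices to determine its group structure. For $M=\p(\mathcal{O}_{\p^n}\oplus\mathcal{O}_{\p^n}(-k))$ with $k\geq 1$, I would first argue that the bundle projection $\pi\colon M\to\p^n$ is the unique $\p^1$-bundle structure on $M$ (for $k>0$ the two extremal rays of the Picard-rank-$2$ Mori cone are not interchangeable), so that every element of $\aut^0(M)$ preserves $\pi$ and descends to $\aut^0(\p^n)=\PGL_{n+1}(\C)$. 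This yields a homomorphism $\aut^0(M)\to\PGL_{n+1}(\C)$ whose kernel consists of the automorphisms over the base, which I would compute from the sheaf of endomorphisms of $E=\mathcal{O}_{\p^n}\oplus\mathcal{O}_{\p^n}(-k)$: since $H^0(\p^n,\mathcal{O}(-k))=0$ and $H^0(\p^n,\mathcal{O}(k))=\C[x_0,\dots,x_n]_k$, the global endomorphisms are the lower-triangular matrices $\left(\begin{smallmatrix} a & 0 \\ s & b\end{smallmatrix}\right)$ with $a,b\in\C$ and $s\in\C[x_0,\dots,x_n]_k$, so the kernel is $\C^*\ltimes\C[x_0,\dots,x_n]_k$. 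Surjectivity onto $\PGL_{n+1}(\C)$ comes from the natural $\GL_{n+1}(\C)$-linearisation of $E$ (acting naturally on $\mathcal{O}(-k)$, trivially on $\mathcal{O}$), under which the centre $\C^*\subset\GL_{n+1}(\C)$ acts on the fibres of $\p(E)$ through $\lambda\mapsto\lambda^{k}$, hence trivially exactly when $\lambda\in\mu_k$. Assembling the extension, the reductive part combining the base action and the fibrewise scaling is $\GL_{n+1}(\C)/\mu_k$ and $\C[x_0,\dots,x_n]_k$ is the unipotent radical, giving $\aut^0(M)\simeq(\GL_{n+1}(\C)/\mu_k)\rtimes\C[x_0,\dots,x_n]_k$.

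For $M=\p^n\times C$ with $g(C)\geq 1$, I would use rigidity. Each fibre $\p^n\times\{c\}$ of the projection $p_C$ is rational, whereas $C$ contains no rational curve, so for any $\phi\in\aut^0(M)$ the restriction of $p_C\circ\phi$ to $\p^n\times\{c\}$ is a morphism from $\p^n$ to $C$ and therefore constant. Hence $p_C\circ\phi$ factors through $p_C$, so $\phi$ preserves the projection to $C$ and induces an automorphism of $C$; the same argument on the other factor shows $\phi$ respects the product structure, whence $\aut^0(\p^n\times C)=\aut^0(\p^n)\times\aut^0(C)=\PGL_{n+1}(\C)\times\aut^0(C)$.

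The remaining two cases are rational homogeneous spaces $G/P$, and here the underlying computation is that $\operatorname{Lie}(\aut^0(G/P))=H^0(G/P,T_{G/P})$ can be determined as a $G$-module by Bott's theorem; the upshot is Demazure's theorem, according to which $\aut^0(G/P)$ is the adjoint group of $G$ outside a short explicit list of exceptions (projective spaces, odd-dimensional quadrics, and finitely many others). The full flag variety $\p(\T\p^2)\simeq\PGL_3(\C)/B$ is not exceptional, so $\aut^0=\PGL_3(\C)$; the diagram automorphism of type $A_2$ swapping the two $\p^1$-fibrations lies in $\aut\setminus\aut^0$ and contributes nothing. For $\mathbb{G}(1,3)$ I would pass through the Plücker embedding to identify it with the smooth Klein quadric $Q\subset\p^5$; then $\aut^0(Q)=\operatorname{PSO}_6(\C)$, and the exceptional isomorphism $\SO_6(\C)\simeq\SL_4(\C)/\{\pm\id\}$ yields $\aut^0(\mathbb{G}(1,3))=\PGL_4(\C)$.

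The main obstacle is the first case: one must pin down the extension precisely, in particular verifying that it is a genuine semidirect product with unipotent radical $\C[x_0,\dots,x_n]_k$ and that the reductive quotient is $\GL_{n+1}(\C)/\mu_k$ rather than $\PGL_{n+1}(\C)\times\C^*$. This rests both on the canonicity of the fibration $\pi$ (so that the sequence $1\to\C^*\ltimes\C[x_0,\dots,x_n]_k\to\aut^0(M)\to\PGL_{n+1}(\C)\to 1$ is exact) and on carefully tracking how the central $\C^*$ of $\GL_{n+1}(\C)$ acts on the fibres. In Cases $3$ and $4$ the only delicate point is the correct invocation of the exceptional list in Demazure's classification, the Grassmannian being safe precisely because the coincidence $A_3=D_3$ realises $\mathbb{G}(1,3)$ as a quadric whose automorphisms are entirely accounted for by $\PGL_4(\C)$.
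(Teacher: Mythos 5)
The paper offers no proof of this proposition at all --- it refers the reader to \cite{MR1334091} --- so your argument has to stand on its own rather than be measured against an in-paper one. Most of it does stand. For $\p(\mathcal{O}_{\p^n}\oplus\mathcal{O}_{\p^n}(-k))$ with $k\geq 1$: the bundle projection is indeed preserved (your Mori-cone remark works, or more simply $\aut^0$ is connected and so acts trivially on the N\'eron--Severi group, hence fixes the fibre class); the computation of the fibrewise automorphisms from $\operatorname{End}(\mathcal{O}\oplus\mathcal{O}(-k))$ is correct, though you should add the standard point that an automorphism of $\p(E)$ over the base is induced by an isomorphism $E\to E\otimes L$ for some line bundle $L$, and Krull--Schmidt forces $L\simeq\mathcal{O}_{\p^n}$ here (the summands have distinct degrees), so nothing is lost by working only with $\aut(E)$; and tracking the central $\C^*\subset\GL_{n+1}(\C)$ acting on fibres through $\lambda\mapsto\lambda^k$ correctly produces $\GL_{n+1}(\C)/\mu_k$, which together with the normal unipotent subgroup $\C[x_0,\dots,x_n]_k$ exhausts $\aut^0$ and gives the stated semidirect product. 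The two homogeneous cases via Demazure's theorem are also fine: neither $\PGL_3(\C)/B$ nor $\mathbb{G}(1,3)$ is on the exceptional list, and your alternative route through the Klein quadric and $A_3=D_3$ is consistent with it.

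The genuine gap is in the case $\p^n\times C$. The first half is correct: fibres of $p_C$ are rational, $C$ carries no rational curves, so every $\phi\in\aut^0$ preserves $p_C$ and induces an automorphism of $C$. But your claim that ``the same argument on the other factor shows $\phi$ respects the product structure'' fails: the maps one would need to be constant there are morphisms $C\to\p^n$, and a curve of genus $\geq 1$ has plenty of non-constant morphisms to $\p^n$ (compose any non-constant map $C\to\p^1$ with a line $\p^1\subset\p^n$), so the genus argument gives nothing in that direction. What actually finishes the proof is a different rigidity statement: once $\phi$ preserves $p_C$, write $\phi(x,c)=(F_c(x),g(c))$ with $F_c\in\aut(\p^n)=\PGL_{n+1}(\C)$; then $c\mapsto F_c$ is a morphism from the projective (hence complete) curve $C$ to the affine algebraic group $\PGL_{n+1}(\C)$, and is therefore constant. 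This yields $\phi=f\times g$ and the direct product decomposition $\aut^0(\p^n\times C)\simeq\PGL_{n+1}(\C)\times\aut^0(C)$. The fix is one line, but as written that step would not go through.
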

To describe the $\PGL_{n+1}(\C)$-actions on these varieties we recall some results about group cohomology.

\subsection{Group cohomology} Let $H$ be a group that acts by automorphisms on a group $N$. A {\it cocycle} is a map $\tau\colon H\to N$ such that $\tau(gh)=\tau(g)(g\cdot\tau(h))$ for all $g,h\in H$.
Two cocycles $\tau$ and $\nu$ are {\it cohomologous} if there exists an $a\in N$ such that 
\[
\tau(g)=a^{-1}\nu(g)(g\cdot a)\hspace{1mm} \text{ for all }g\in H.
\]
The set of cocycles up to cohomology will be denoted by $H^1(H,N)$. If $H$ acts trivially on $N$, the set $H^1(H,N)$ corresponds to the set of group homomorphisms $H\to N$. The following lemma is well known.

\begin{lemma}\label{semidirect}
Let $G:=N\rtimes H$ be a semi direct product of groups and $\pi\colon G\to H$ the canonical projection on $H$. Then there exists a bijection between $H^1(H,N)$ and the sections of $\pi$ up to conjugation in $N$.
\end{lemma}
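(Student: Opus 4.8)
The plan is to exhibit an explicit bijection at the level of sections and cocycles, and then verify that it is compatible with the two equivalence relations involved: conjugation by $N$ on the side of sections, and being cohomologous on the side of cocycles. First I would fix the standard presentation of the semidirect product, regarding $N$ and $H$ as subgroups of $G=N\rtimes H$ with $N$ normal, $N\cap H=\{e\}$ and $G=NH$, and writing the action as $h\cdot n=hnh^{-1}$. The projection $\pi\colon G\to H$ is then the homomorphism with kernel $N$ whose restriction to $H$ is $\id_H$.

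Given a section $s\colon H\to G$ of $\pi$, the relation $\pi(s(h))=h$ forces $s(h)h^{-1}\in\ker\pi=N$, so I set $\tau(h):=s(h)h^{-1}\in N$ and obtain $s(h)=\tau(h)h$ with $\tau(h)$ uniquely determined; conversely any map $\tau\colon H\to N$ defines a set map $s$ by this formula. Expanding $s(g)s(h)=\tau(g)\,g\tau(h)\,h=\tau(g)(g\cdot\tau(h))\,gh$ and comparing with $s(gh)=\tau(gh)\,gh$ shows that $s$ is a group homomorphism if and only if $\tau(gh)=\tau(g)(g\cdot\tau(h))$, that is, if and only if $\tau$ is a cocycle. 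This already gives a bijection between sections of $\pi$ and cocycles $H\to N$.

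Next I would compute the effect of conjugating a section by an element $a\in N$. Writing $s'(h)=a\,s(h)\,a^{-1}$ and using $s(h)=\tau(h)h$, one gets $\tau'(h)=s'(h)h^{-1}=a\tau(h)(h\cdot a^{-1})$; rearranging yields $\tau(h)=a^{-1}\tau'(h)(h\cdot a)$, which is precisely the relation that $\tau$ and $\tau'$ are cohomologous, with the very same element $a$. Hence two sections are conjugate in $N$ exactly when their associated cocycles are cohomologous, so the bijection of the previous step descends to the claimed bijection between $H^1(H,N)$ and the sections of $\pi$ up to conjugation in $N$.

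The verifications are all elementary manipulations in $G$. The only point that requires genuine care is the bookkeeping with the convention for the action of $H$ on $N$ (here $h\cdot n=hnh^{-1}$), so that the cocycle and coboundary identities emerge in exactly the form stated just before the lemma; this is the sole potential source of order or inverse errors, and I anticipate no deeper obstacle.
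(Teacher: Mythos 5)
Your proof is correct and complete: the correspondence $s(h)=\tau(h)h$ identifies sections with cocycles, and your computation $\tau(h)=a^{-1}\tau'(h)(h\cdot a)$ shows conjugation by $a\in N$ corresponds exactly to the coboundary relation as defined in the paper, so the bijection descends to $H^1(H,N)$. The paper offers no proof of this lemma (it is stated as well known), and your argument is precisely the standard one that is being invoked, with the conventions $h\cdot n=hnh^{-1}$ matching the paper's definitions of cocycle and cohomologous without any order or inverse discrepancies.
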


There always exists the trivial cocycle $\tau_0\colon H\to N$, $g\mapsto e_N$. The set $H^1(G,N)$ is therefore a pointed set with basepoint $\tau_0$. Assume that $G$ acts on two groups $A$ and $B$ by automorphisms. A $G$-homomorphism $\phi\colon A\to B$ induces a homomorphism of pointed sets 
\[
\phi_*\colon H^1(G,A)\to H^1(G,B)
\] 
given by $\phi_*(\tau)=\phi\circ\tau.$

\begin{proposition}[\cite{MR554237}, p. 125, Proposition 1]\label{propserre}
Let $G$ be a group that acts by automorphisms on groups $A,B$ and $C$. Every exact sequence of $G$-homomorphisms
\[
1\to A\to B\to C\to 1
\]
induces an exact sequence of pointed sets
\[
H^1(G,A)\to H^1(G,B)\to H^1(G,C).
\]
\end{proposition}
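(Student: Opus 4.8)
The plan is to establish exactness of the sequence at its middle term, which for a three-term sequence of pointed sets means precisely that the image of $i_*$ equals the preimage under $p_*$ of the basepoint of $H^1(G,C)$. Here I write $i\colon A\to B$ for the inclusion realizing $A$ as the normal subgroup $\ker(p)$ and $p\colon B\to C$ for the surjection, so that the given exact sequence says $C=B/A$. Since the excerpt already records that a $G$-homomorphism $\phi$ induces a well-defined map $\phi_*$ of pointed sets on $H^1$ via $\phi_*(\tau)=\phi\circ\tau$, I may work directly with $i_*$ and $p_*$ without reproving well-definedness.

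First I would treat the easy inclusion $\operatorname{im}(i_*)\subseteq p_*^{-1}(\tau_0)$, where $\tau_0$ denotes the class of the trivial cocycle. For a cocycle $\tau\colon G\to A$ one has $p_*(i_*(\tau))=p\circ i\circ\tau$, and the composite homomorphism $p\circ i\colon A\to C$ is trivial because $A=\ker(p)$; hence $p_*(i_*(\tau))$ is the constant map $g\mapsto e_C$, which represents the basepoint of $H^1(G,C)$.

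The substance is the reverse inclusion. Let $\beta\colon G\to B$ be a cocycle whose class lies in $p_*^{-1}(\tau_0)$, i.e.\ $p\circ\beta$ is cohomologous to the trivial cocycle; by the definition fixed in the excerpt this produces an element $c\in C$ with $p(\beta(g))=c^{-1}(g\cdot c)$ for all $g\in G$. Using surjectivity of $p$, I would choose a lift $a\in B$ with $p(a)=c$ and pass to the cohomologous cocycle $\beta'(g):=a\,\beta(g)\,(g\cdot a)^{-1}$; this is cohomologous to $\beta$ because it equals $(a^{-1})^{-1}\beta(g)\,(g\cdot a^{-1})$, which is the coboundary of $a^{-1}$ in the sense of the excerpt's convention. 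Applying the $G$-equivariant map $p$ and substituting the formula for $p(\beta(g))$, the three factors telescope: $p(\beta'(g))=c\cdot c^{-1}(g\cdot c)\cdot(g\cdot c)^{-1}=e_C$. Thus $\beta'$ takes values in $\ker(p)=A$, and since $A$ is a $G$-stable subgroup of $B$ the same map is a cocycle $G\to A$; consequently $i_*([\beta'])=[\beta']=[\beta]$ in $H^1(G,B)$, so $[\beta]\in\operatorname{im}(i_*)$, which completes exactness.

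I expect the only genuinely delicate point to be the non-abelian bookkeeping: because $A$, $B$, $C$ need not be abelian I must respect the order of multiplication and the compatibility of the $G$-action with inversion throughout, and I must match the precise sign convention for \emph{cohomologous} used in the excerpt so that the modification by the lift $a$ is a legitimate coboundary and the telescoping collapse $p(\beta'(g))=e_C$ really holds. Granting that care, the argument is a direct diagram chase requiring no input beyond surjectivity of $p$ and the identification $A=\ker(p)$.
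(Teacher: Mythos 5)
Your proof is correct: the paper itself offers no proof of this proposition (it is quoted from Serre's book), and your argument --- replacing a cocycle $\beta$ with $p_*[\beta]$ trivial by the cohomologous cocycle $\beta'(g)=a\,\beta(g)\,(g\cdot a)^{-1}$, where $a$ lifts the trivializing element $c\in C$, so that $\beta'$ takes values in $\ker(p)=A$ --- is exactly the standard twisting argument given in the cited source. Your bookkeeping is sound: $\beta'$ is the coboundary of $a^{-1}$ in the paper's convention, and the telescoping $p(\beta'(g))=c\cdot c^{-1}(g\cdot c)\cdot (g\cdot c)^{-1}=e_C$ uses only $G$-equivariance of $p$, so the verification is complete.
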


\subsection{Proof of Theorem $\ref{pglmain}$}
\subsubsection{Uniqueness of the actions}
Now we show that $\PGL_{n+1}(\C)$ can only be embedded into $\aut^0(\p(\mathcal{O}_{\p^n}\oplus\mathcal{O}_{\p^n}(-k))$ if and only if $n\mid k$.  Then we show that in this case, up to conjugation and algebraic automorphisms of $\PGL_{n+1}(\C)$, the embedding is unique. 

By Proposition \ref{autogroup}, $\aut^0(\p(T\p^2))\simeq \PGL_3(\C)$ and $\aut( \mathbb{G}(1,3))\simeq\PGL_4(\C)$. The uniqueness of the embedding is clear in this cases since $\PGL_{n+1}(\C)$ is a simple group. If $M\simeq \p^n\times C$ uniqueness follows directly from the fact that $\PGL_{n+1}(\C)$ does not embed into $\aut(C)$. 

\begin{lemma}\label{existencesection}
A non-trivial group homomorphism $\PGL_n(\C)\to\GL_n(\C)/\mu_k$ exists if and only if $n\mid k$, where
\[
\mu_k=\{\lambda\id\mid \lambda\in\C, \lambda^k=1\}.
\]
\end{lemma}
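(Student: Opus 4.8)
The plan is to treat the homomorphism as a morphism of algebraic groups (the case relevant to the classification of $\PGL_{n+1}(\C)$-actions) and to reduce everything to the representation theory of $\SL_n(\C)$ through its universal covering $p\colon\SL_n(\C)\to\PGL_n(\C)$, whose kernel is the center $\mu_n=\{\lambda\id\mid\lambda^n=1\}$.

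For the direction ``$n\mid k\Rightarrow$ existence'' I would argue directly: consider the composition $\SL_n(\C)\hookrightarrow\GL_n(\C)\xrightarrow{\,q\,}\GL_n(\C)/\mu_k$, where $q$ is the quotient map. Its kernel is $\SL_n(\C)\cap\mu_k=\mu_{\gcd(n,k)}$, which equals $\mu_n$ exactly when $n\mid k$. In that case the map descends to an injective, in particular non-trivial, homomorphism $\PGL_n(\C)=\SL_n(\C)/\mu_n\to\GL_n(\C)/\mu_k$.

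For the converse, suppose $\phi\colon\PGL_n(\C)\to\GL_n(\C)/\mu_k$ is non-trivial; as $\PGL_n(\C)$ is simple, $\phi$ is injective. Set $\psi:=\phi\circ p\colon\SL_n(\C)\to\GL_n(\C)/\mu_k$. Since $\SL_n(\C)$ is simply connected and $q$ is a central isogeny, $\psi$ lifts to a homomorphism $\tilde\rho\colon\SL_n(\C)\to\GL_n(\C)$ with $q\circ\tilde\rho=\psi$: the pullback central extension $1\to\mu_k\to E\to\SL_n(\C)\to1$ splits, because its identity component maps isomorphically onto the simply connected group $\SL_n(\C)$, and the resulting section composed with the projection to $\GL_n(\C)$ gives $\tilde\rho$. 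This $\tilde\rho$ is an $n$-dimensional representation of $\SL_n(\C)$, and it is non-trivial: otherwise $\psi$ would land in the abelian group $\mu_k$ and hence vanish ($\SL_n(\C)$ being perfect), forcing $\phi=1$.

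The crux is the representation-theoretic input: every non-trivial irreducible representation of $\SL_n(\C)$ has dimension $\geq n$, with equality only for the standard representation $V$ and its dual $V^\vee$. As $\tilde\rho$ is completely reducible of dimension $n$ and $\SL_n(\C)$ has no non-trivial characters, $\tilde\rho$ must be irreducible, hence isomorphic to $V$ or $V^\vee$; either way it is faithful and carries the center $\mu_n$ onto the scalar matrices $\{\zeta\id\mid\zeta^n=1\}\subset\GL_n(\C)$. Finally, $\psi=\phi\circ p$ kills $\ker p=\mu_n$, so $\tilde\rho(\mu_n)\subseteq\ker q=\{\lambda\id\mid\lambda^k=1\}$; this says every $n$-th root of unity is a $k$-th root of unity, i.e. $n\mid k$. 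I expect the delicate points to be the clean justification of the lifting step and the precise citation of the minimal-dimension fact; everything else is bookkeeping with roots of unity. For genuinely abstract homomorphisms one would first reduce to the algebraic case using the rigidity of $\PGL_n(\C)$ recorded in Proposition~\ref{isopgl}.
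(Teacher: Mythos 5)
The paper never actually proves this lemma: immediately after stating it (together with Lemma \ref{cohomv}) it declares the results ``certainly well known'' and refers to \cite{longversion} for a detailed proof, so there is no in-paper argument to compare you against, and I can only judge your proposal on its merits. For morphisms of algebraic groups your argument is correct and complete. The existence direction is exactly right: the composite $\SL_n(\C)\hookrightarrow\GL_n(\C)\to\GL_n(\C)/\mu_k$ has kernel $\SL_n(\C)\cap\mu_k=\mu_{\gcd(n,k)}$, which coincides with the center of $\SL_n(\C)$ precisely when $n\mid k$, and then the map descends to the embedding of $\PGL_n(\C)$ that the paper later uses. The converse is also sound: the pulled-back central extension splits because a central isogeny from a connected group onto the simply connected group $\SL_n(\C)$ is an isomorphism; the lift $\tilde\rho$ is a non-trivial $n$-dimensional representation, so by complete reducibility and the minimal-dimension fact it is the standard representation or its dual up to conjugation; and then $\tilde\rho(\mu_n)\subseteq\ker q$ forces every $n$-th root of unity to be a $k$-th root of unity, i.e. $n\mid k$.

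The one genuine weak point is your closing sentence. The lemma is stated for arbitrary group homomorphisms, and Proposition \ref{isopgl} does not by itself reduce the abstract case to the algebraic one: that proposition applies to an abstract isomorphism between $\PGL_n(\C)$ and a group already known to be algebraic, whereas the image of an abstract homomorphism $\phi\colon\PGL_n(\C)\to\GL_n(\C)/\mu_k$ is a priori only an abstract subgroup, with no reason to be Zariski closed, so there is nothing to which the proposition can be applied. (Images of abstract homomorphisms of simple groups into algebraic groups can indeed fail to be closed -- graphs of wild field automorphisms give Zariski-dense, non-closed examples.) A genuine treatment of the abstract case needs more: either Borel--Tits theory of abstract homomorphisms of simple algebraic groups, or a central-extension argument -- the derived subgroup $P$ of $q^{-1}(\phi(\PGL_n(\C)))$ is perfect (since the quotient $\PGL_n(\C)$ is perfect and the kernel $\mu_k$ is central), hence has determinant one and lies in $\SL_n(\C)$, and is a perfect central extension of $\PGL_n(\C)$ by a cyclic group, whose abstract $n$-dimensional representations must then be controlled. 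This caveat is harmless for the paper's purposes, since the lemma is invoked only for an algebraic homomorphism (the one induced in the proof of the lemma on $\p(\mathcal{O}_{\p^n}\oplus\mathcal{O}_{\p^n}(-k))$), which is exactly the case you prove; but as a proof of the statement as literally written, it is incomplete.
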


Let $n$ and $k$ be positive integers such that $(n+1)\mid k$. Denote by $\C[x_0,\dots,x_n]_k$ the vector space of homogeneous polynomials of degree $k$ . We define
\[
G:=\C[x_0,\dots,x_n]_k\rtimes\PGL_{n+1}(\C),
\]
where the semi direct product is taken with respect to the action $g\cdot p=p\circ g^{-1}$. Here we look at $\PGL_{n+1}(\C)\subset\GL_{n+1}(\C)/\mu_k$ as described in Lemma \ref{existencesection}. Let  $
\pi\colon G\to\PGL_{n+1}(\C)$ be the standard projection.

\begin{lemma}\label{cohomv}
Up to conjugation, there exists a unique section
$\iota\colon \PGL_{n+1}(\C)\to G$ of $\pi$.
\end{lemma}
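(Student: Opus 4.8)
The plan is to read $G=\C[x_0,\dots,x_n]_k\rtimes\PGL_{n+1}(\C)$ as a semidirect product with normal factor $N:=\C[x_0,\dots,x_n]_k$ and to apply Lemma \ref{semidirect}. In its evident regular version, that lemma identifies the algebraic sections of $\pi$, up to conjugation by $N$, with the pointed set $H^1(\PGL_{n+1}(\C),N)$, the basepoint being the class of the trivial section $g\mapsto(e,g)$. We only care about sections that are morphisms of algebraic groups, since these are the ones giving algebraic $\PGL_{n+1}(\C)$-actions, and the corresponding cocycles $\tau\colon\PGL_{n+1}(\C)\to N$ are then regular maps. Thus the asserted uniqueness up to conjugation is exactly the vanishing $H^1(\PGL_{n+1}(\C),N)=0$, which is what I would establish.

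First I would pin down the module structure. As $(n+1)\mid k$, a scalar $\lambda\id$ acts on a form of degree $k$ by $\lambda^{-k}$, which is trivial on $\mu_k$, so the $\GL_{n+1}(\C)$-action $g\cdot p=p\circ g^{-1}$ descends to $\GL_{n+1}(\C)/\mu_k$; pulling back along the embedding $\PGL_{n+1}(\C)\hookrightarrow\GL_{n+1}(\C)/\mu_k$ furnished by Lemma \ref{existencesection} turns $N$ into a finite-dimensional rational representation of $\PGL_{n+1}(\C)$ (a symmetric power of the standard representation, which descends to $\PGL_{n+1}(\C)$ precisely because $(n+1)\mid k$). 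What matters for the argument is only that $N$ is rational and finite-dimensional.

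The core is then a vanishing statement. Since $\car\C=0$, the semisimple group $\PGL_{n+1}(\C)$ is linearly reductive, and the rational cohomology of any finite-dimensional rational module vanishes in positive degrees; in particular $H^1(\PGL_{n+1}(\C),N)=0$. If one prefers an argument by hand, one can invoke the unitarian trick: given a regular cocycle $\tau$, restrict it to the compact real form $\PU_{n+1}$ and average over Haar measure to produce $v\in N$ with $\tau(k)=k\cdot v-v$ for every $k\in\PU_{n+1}$; the regular map $g\mapsto\tau(g)-(g\cdot v-v)$ then vanishes on the Zariski-dense subgroup $\PU_{n+1}$, hence identically on $\PGL_{n+1}(\C)$, so $\tau$ is the coboundary of $v$. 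Either way every cocycle is a coboundary, the pointed set $H^1(\PGL_{n+1}(\C),N)$ is a single point, and the section is unique up to conjugation by an element of $N\subset G$.

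The main obstacle I anticipate is not conceptual but a matter of staying in the right category: one must be sure that a section gives rise to a genuinely regular cocycle, so that the cohomological vanishing applies, and that the trivializing element $v$ lies in $N$ (this is exactly what ``$\tau$ is a coboundary'' provides), so that the conjugation carrying an arbitrary section to the trivial one is realized inside $N$, as demanded by Lemma \ref{semidirect}.
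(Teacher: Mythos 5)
Your argument is correct, and it is the one the paper's own framework anticipates: the paper never proves Lemma \ref{cohomv} in this text (it calls the statement ``certainly well known'' and defers the details to \cite{longversion}), but the surrounding machinery --- Lemma \ref{semidirect} identifying sections up to $N$-conjugation with $H^1(\PGL_{n+1}(\C),N)$, followed by a vanishing statement fed into Proposition \ref{propserre} --- is exactly what you implement. Your two routes to the vanishing (semisimplicity of the category of rational $\PGL_{n+1}(\C)$-modules in characteristic zero, or averaging a regular cocycle over the compact real form $\PU_{n+1}$ and using Zariski density) are both sound, and you correctly isolate the two points needing care: that $\C[x_0,\dots,x_n]_k$ is a rational $\PGL_{n+1}(\C)$-module precisely because $(n+1)\mid k$, and that the trivializing element lies in $N$, so the conjugation is by an element of $N$ as Lemma \ref{semidirect} requires.

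One caveat is worth stating explicitly. You take ``section'' to mean a homomorphism of algebraic groups, so that the associated cocycle is regular and averaging applies. The paper's cohomological formalism (Lemma \ref{semidirect}, Proposition \ref{propserre}) is written for arbitrary set-theoretic cocycles, and for a genuinely abstract section your averaging has no purchase --- one would first need to force regularity, e.g.\ via Borel--Tits theory of abstract homomorphisms. This does not damage the paper's application: the only section that actually arises in the proof of Theorem \ref{pglmain} is $\phi\circ(\pi\circ\phi)^{-1}$ for an algebraic embedding $\phi$, hence is algebraic, and the exact-sequence step goes through verbatim for regular cocycles. So your reading of the lemma is the right one for its use, but the restriction in scope should be recorded in the proof rather than left implicit.
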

The results in Lemma \ref{existencesection} and Lemma \ref{cohomv} are certainly well known. A detailed proof can be found in \cite{longversion}.

\begin{lemma}
$\PGL_{n+1}(\C)$ acts non-trivially on the fibration $\p(\mathcal{O}_{\p^n}\oplus\mathcal{O}_{\p^n}(-k))$ with basis $\p^n$ if and only if $k=l(n+1)$ for some nonnegative $l$. Moreover, in this case the action is unique up to conjugation and up to algebraic automorphisms of $\PGL_{n+1}(\C)$.
\end{lemma}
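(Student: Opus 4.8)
The plan is to read everything off the structure of $\aut^0(X_k)$ furnished by Proposition \ref{autogroup}, where I abbreviate $X_k := \p(\mathcal{O}_{\p^n}\oplus\mathcal{O}_{\p^n}(-k))$ and write $V := \C[x_0,\dots,x_n]_k$ for the normal abelian subgroup of fibrewise translations. Thus there is a split exact sequence
\[
1 \to V \to \aut^0(X_k) \xrightarrow{\ \pi\ } \GL_{n+1}(\C)/\mu_k \to 1,
\]
and the composite of $\pi$ with $\GL_{n+1}(\C)/\mu_k \to \PGL_{n+1}(\C)$ records the induced action on the base $\p^n$. Since $\PGL_{n+1}(\C)$ is connected, any action $\rho\colon\PGL_{n+1}(\C)\to\aut(X_k)$ factors through $\aut^0(X_k)$, so it is automatically compatible with the fibration over $\p^n$.

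First I would settle the existence claim. Set $\bar\rho := \pi\circ\rho\colon\PGL_{n+1}(\C)\to\GL_{n+1}(\C)/\mu_k$. As $\PGL_{n+1}(\C)$ is simple, $\bar\rho$ is either trivial or injective; and if it is trivial then $\rho$ takes values in the abelian group $V$, which forces $\rho$ to be trivial because $\PGL_{n+1}(\C)$ is perfect. Hence a non-trivial action yields a non-trivial $\bar\rho$, and Lemma \ref{existencesection}, applied with $n+1$ in place of $n$, shows such a $\bar\rho$ exists precisely when $(n+1)\mid k$, i.e. $k=l(n+1)$. Conversely, when $(n+1)\mid k$ the non-trivial homomorphism provided by Lemma \ref{existencesection}, composed with the inclusion of the linear part $\GL_{n+1}(\C)/\mu_k\hookrightarrow\aut^0(X_k)$, gives an injective, hence non-trivial, action.

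For uniqueness, assume $(n+1)\mid k$ and take two non-trivial actions $\rho_1,\rho_2$ with injective linear parts $\bar\rho_1,\bar\rho_2$. The decisive reduction is to arrange $\bar\rho_1=\bar\rho_2$: by the classification of non-trivial homomorphisms $\PGL_{n+1}(\C)\to\GL_{n+1}(\C)/\mu_k$ up to conjugation in the target and automorphisms of the source, I may---after replacing $\rho_2$ by $\rho_2\circ\theta$ for a suitable $\theta\in\aut(\PGL_{n+1}(\C))$ and conjugating in $\aut^0(X_k)$---assume $\bar\rho_1=\bar\rho_2=:j_0$. Then both $\rho_i$ take values in the subgroup $G:=\pi^{-1}(j_0(\PGL_{n+1}(\C)))\simeq V\rtimes\PGL_{n+1}(\C)$ and are sections of the projection $G\to\PGL_{n+1}(\C)$. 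Lemma \ref{cohomv} asserts that such a section is unique up to conjugation in $V$, so $\rho_1$ and $\rho_2$ are conjugate, as claimed.

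I expect the main obstacle to be precisely this reduction step, namely showing that any two injective homomorphisms $\PGL_{n+1}(\C)\to\GL_{n+1}(\C)/\mu_k$ agree after an automorphism of the source and a conjugation of the target. This is the one place where soft group theory does not suffice and one must invoke the representation theory of $\SL_{n+1}(\C)$ (the natural refinement of Lemma \ref{existencesection}): the relevant $(n+1)$-dimensional projective representations are the standard one and its dual, which are interchanged by the automorphism $g\mapsto{}^{t}g^{-1}$. By contrast the perfectness argument for existence and the final appeal to Lemma \ref{cohomv} are formal.
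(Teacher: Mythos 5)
Your proof is correct, and it shares the paper's skeleton: divisibility via Lemma \ref{existencesection} applied to the induced map on the linear part, and uniqueness via rigidity of sections of a semidirect product with abelian kernel $V=\C[x_0,\dots,x_n]_k$ (Lemma \ref{cohomv}). The genuine difference is which quotient of $\aut^0(X_k)$ you project to, and hence how the fibrewise scalars get disposed of. The paper views $F_l$ as a $\p^1$-fibration and works with the sequence $1\to\aut^0_{\p^n}(F_l)\to\aut^0(F_l)\xrightarrow{\pi}\PGL_{n+1}(\C)\to 1$, whose kernel is the larger group $\aut^0_{\p^n}(F_l)\simeq\C^*\ltimes V$; after noting that $\pi\circ\phi$ is an algebraic automorphism of $\PGL_{n+1}(\C)$ (so that, up to a source automorphism, $\phi$ is a section of $\pi$), it invokes Lemma \ref{semidirect} and the exact sequence of pointed sets of Proposition \ref{propserre} to deduce $H^1(H,\C^*\ltimes V)=\{1\}$ from $H^1(H,V)=\{1\}$ (Lemma \ref{cohomv}) and $H^1(H,\C^*/\mu_k)=\{1\}$. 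You instead quotient only by $V$, mapping onto $\GL_{n+1}(\C)/\mu_k$, and absorb the central $\C^*$ into your reduction step: any two injective homomorphisms $\PGL_{n+1}(\C)\to\GL_{n+1}(\C)/\mu_k$ agree up to conjugation in the target and an automorphism of the source. Note that this classification is no deeper than the paper's corresponding step, and your closing worry is unfounded: composing with $\GL_{n+1}(\C)/\mu_k\to\PGL_{n+1}(\C)$ gives an automorphism of $\PGL_{n+1}(\C)$ (simplicity plus connectedness/dimension, then the known description of $\aut(\PGL_{n+1}(\C))$), and two lifts of the same automorphism differ by a central twist $\PGL_{n+1}(\C)\to\C^*/\mu_k$, which is trivial by perfectness --- so no serious representation theory of $\SL_{n+1}(\C)$ is needed. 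What your route buys is that Proposition \ref{propserre} becomes unnecessary; the one point you should make explicit is that after the reduction you may take $j_0$ to be the embedding of Lemma \ref{existencesection}, so that $\pi^{-1}\bigl(j_0(\PGL_{n+1}(\C))\bigr)\simeq V\rtimes\PGL_{n+1}(\C)$ is literally the group $G$ of Lemma \ref{cohomv}, with the action $g\cdot p=p\circ g^{-1}$ that lemma presupposes.
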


\begin{proof}
Let $\phi\colon \PGL_{n+1}(\C)\to\aut^0(\p(\mathcal{O}_{\p^n}\oplus\mathcal{O}_{\p^n}(-k)))$ be an algebraic embedding. By Proposition \ref{autogroup}, there exists an exact sequence of algebraic homomorphisms
\[
1\to\C[x_0,\dots,x_n]_k\to\aut^0(\p(\mathcal{O}_{\p^n}\oplus\mathcal{O}_{\p^n}(-k)))\to\GL_{n+1}(\C)/\mu_k\to 1.
\]
If $\phi$ is non-trivial, this induces a non-trivial algebraic homomorphism from \newline $\PGL_{n+1}(\C)$ into $\GL_{n+1}(\C)/\mu_k$ and by Lemma \ref{existencesection} this is possible if and only if $(n+1)\mid k$.
So assume that $k=l(n+1)$ for an integer $l$. It remains to show that in this case $\phi$ is unique up to conjugation and up to algebraic automorphisms of $\PGL_{n+1}(\C)$. Let 
\[
F_l:=\p(\mathcal{O}_{\p^n}\oplus\mathcal{O}_{\p^n}(-k)).
\]
We look at $F_l$ as a $\p^1$-fibration over the basis $\p^{n}$. So there is an exact sequence

\[
1\to \aut^0_{\p^n}(F_l)\to \aut^0(F_l)\xrightarrow{\pi}\PGL_n(\C)\to 1.
\]
Here, $\aut^0_{\p^n}(F_l)\simeq\C^*\ltimes\C[x_0,\dots, x_n]_k$ denotes the subgroup of automorphisms of $F_l$ that fix the basis $\p^n$ pointwise. 

Let $H:=\PGL_{n+1}(\C)$.  By Lemma \ref{semidirect}, the sections of $\pi$ up to conjugation are in bijection with 
\[
H^1(H, \aut^0_{\p^n}(F_l))=H^1(H, \C[x_0,\ldots,x_n]_k\rtimes\C^*/\mu_k).
\] 
By Proposition \ref{propserre}, there is an exact sequence of pointed sets 
\[
H^1(H, \C[x_0,\dots,x_n]_k)\to H^1(H, \aut_{\p^n}(F_l))\to H^1(H, \C^*/\mu_k).
\]
The action of $H$ on $\C^*/\mu_k$ is trivial, so $H^1(H,\C^*/\mu_k)$ is the set of homomorphisms $H\to\C^*/\mu_k$. Hence $ H^1(H, \C^*/\mu_m)=\{1\}$. By Lemma \ref{cohomv}, we obtain $H^1(H, \C[x_0,\dots,x_n]_k)=\{1\}$ and thus $H^1(H, \aut_{\p^n}(F_l))=\{1\}$.
So, all sections of $\pi$ are conjugate.

Now, since $H$ is simple and not contained in $\aut^0_{\p^n}(F_l)$, we obtain
$\pi\circ\phi(H)\subset H.$
Both $\phi$ and $\pi$ are are algebraic morphisms, so 
$\pi\circ\phi(H)=H.$
Therefore, up to the algebraic automorphism $\pi\circ\phi$, the homomorphism $\phi$ is a section of $\pi$.
\end{proof}

\subsubsection{Non conjugacy}
It remains to show that the actions from Theorem \ref{pglmain} are not birationally conjugate. 

Let $M$ be a variety of dimension $n+1$ on which $\PGL_{n+1}(\C)$ acts faithfully.

If $M$ is not rational, then $M$ is isomorphic to $\p^n\times C$ for some smooth curve $C$. Recall that $\p^n\times C$ is birationally equivalent to $\p^n\times C'$ for smooth curves $C$ and $C'$ if and only if $C$ and $C'$ are birationally equivalent which again implies that $C$ and $C'$ are isomorphic. So, if $\PGL_{n+1}(\C)$ acts rationally and non trivially on a non rational variety $M$ of dimension $n+1$, then this one is uniquely determined up to algebraic automorphisms of $\PGL_{n+1}(\C)$ and up to birational conjugation in $\Bir(M)$.

In the case that $M$ is rational, we have to show that the $\PGL_{n+1}(\C)$-actions listed in Theorem \ref{pglmain} are not conjugate to each other. For this, note that none of them has an orbit of codimension $\geq 1$.  Lemma \ref{exci} induces therefore that any birational transformation conjugating one action to another one must be an isomorphism. As the varieties listed in Theorem \ref{pglmain} are not isomorphic we conclude that the actions are not conjugate.


\section{Extension to $\Cr_n$}\label{extension}
In this section we study how the $\PGL_{n+1}(\C)$-actions described in the above section extend to rational $\Cr_n$-actions. Our goal is to prove Theorem \ref{crmain}. We proceed case by case.

\subsection{The case $\mathbb{G}(1,3)$}
Let

\[s_1\coloneqq \left( \begin{array}{ccc}
0 & 0 &1\\
1 & 0 &0\\
0& 1& 0\\
 \end{array} \right),
  \text{ and } 
 s_2\coloneqq \left( \begin{array}{ccc}
0 & -1 &1\\
0& -1 &0\\
1& -1& 0\\
 \end{array} \right)\in\GL_3(\Z).
\]

\begin{lemma}\label{homglz}
Let $G$ be a group. There exists no group homomorphism $\rho\colon \GL_3(\Z)\to G$ such that $\rho(s_1)$ has order $3$ and $s_2\in\ker(\rho)$.
\end{lemma}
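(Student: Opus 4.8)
The plan is to prove the statement by showing that $s_1$ already lies in the normal subgroup of $\GL_3(\Z)$ generated by $s_2$; more precisely, I would exhibit $s_1$ as a product of two $\GL_3(\Z)$-conjugates of $s_2$. This suffices: if $\rho(s_2)=e$, then $\rho$ annihilates every conjugate $g\,s_2\,g^{-1}$, hence every product of such conjugates, so $\rho(s_1)=e$, which is incompatible with $\rho(s_1)$ having order $3$. Note that the purely relational data available for free (namely $s_1^3=s_2^2=(s_1s_2)^3=\id$, which make $\langle s_1,s_2\rangle$ a copy of $A_4$ with $s_2$ a double transposition) is \emph{not} enough on its own, since the abstract group on these relations surjects onto $\Z/3$ with $s_2$ in the kernel; the argument must genuinely use the ambient group $\GL_3(\Z)$.

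The factorization I have in mind uses negated transposition matrices. Writing $P_\tau$ for the permutation matrix of a transposition $\tau$, set $a:=-P_{(1\,2)}$ and $b:=-P_{(2\,3)}$. Both are involutions lying in $\SL_3(\Z)$, and a one-line check gives $ab=P_{(1\,2)}P_{(2\,3)}=s_1$: the two sign changes cancel, and the product of these two transpositions is exactly the $3$-cycle whose matrix is $s_1=\begin{pmatrix}0&0&1\\1&0&0\\0&1&0\end{pmatrix}$. Moreover $a$ and $b$ share with $s_2$ the determinant $1$ and the characteristic polynomial $(t-1)(t+1)^2$.

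It remains to verify that $a$ and $b$ are genuinely $\GL_3(\Z)$-conjugate to $s_2$, and this is the one delicate point I expect to be the main obstacle: for integer matrices, equality of characteristic polynomials only forces conjugacy over $\Q$, not over $\Z$. I would dispose of it in one of two ways. Structurally, an order-$2$ element of $\GL_3(\Z)$ is the same as a $\Z/2$-lattice structure on $\Z^3$, and such lattices are classified by the multiplicities of the three indecomposable types (trivial, sign, and regular representation); for an involution with eigenvalues $1,-1,-1$ the distinguishing invariant is the index of $L_+\oplus L_-$ in $\Z^3$, where $L_\pm$ are the $\pm1$-eigenlattices. A short computation gives index $2$ for each of $s_2$, $a$ and $b$, so all three have the same type and are therefore conjugate. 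The hands-on alternative, which I prefer for a self-contained lemma, is to write the conjugators down directly: one checks by multiplication that $h\,s_2\,h^{-1}=a$ and $g\,s_2\,g^{-1}=b$ for $h=\begin{pmatrix}1&-1&0\\0&0&-1\\0&1&0\end{pmatrix}$ and $g=\begin{pmatrix}0&1&0\\1&-1&0\\0&0&-1\end{pmatrix}$, both of which lie in $\SL_3(\Z)$.

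Combining the pieces, $s_1=ab=\bigl(h\,s_2\,h^{-1}\bigr)\bigl(g\,s_2\,g^{-1}\bigr)$ is a product of two conjugates of $s_2$. Applying $\rho$ with $s_2\in\ker(\rho)$ then forces $\rho(s_1)=\rho(h)\rho(s_2)\rho(h)^{-1}\rho(g)\rho(s_2)\rho(g)^{-1}=e$, contradicting the hypothesis that $\rho(s_1)$ has order $3$. This completes the argument.
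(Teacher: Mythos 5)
Your proof is correct; I verified the three matrix identities on which it rests: $h s_2 h^{-1} = -P_{(1\,2)}$, $g s_2 g^{-1} = -P_{(2\,3)}$, and $\bigl(-P_{(1\,2)}\bigr)\bigl(-P_{(2\,3)}\bigr) = s_1$, with your $h,g$ indeed lying in $\SL_3(\Z)$. Your argument is the same in spirit as the paper's --- both exhibit a product of two $\GL_3(\Z)$-conjugates of $s_2$ and apply $\rho$ to it --- but the decomposition is genuinely different, and yours is sharper. The paper establishes the identity $(A s_2 A^{-1})(AB\, s_2\, B^{-1}A^{-1}) = s_1 T$, where $T$ is the diagonal involution with entries $(1,-1,-1)$; applying $\rho$ gives $\rho(s_1)^{-1} = \rho(T)$, and the contradiction is an order-parity clash: the left-hand side has order $3$ while the right-hand side has order at most $2$. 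You instead write $s_1$ itself as $(h s_2 h^{-1})(g s_2 g^{-1})$, i.e.\ you show that $s_1$ lies in the normal closure of $s_2$ in $\GL_3(\Z)$. That buys a stronger conclusion --- any homomorphism killing $s_2$ kills $s_1$ outright, so $\rho(s_1)=e$ --- and it dispenses with both the auxiliary element $T$ and the final order comparison. Your opening remark is also a worthwhile point the paper does not make: since $\langle s_1,s_2\rangle\simeq A_4$ admits a surjection onto $\Z/3\Z$ whose kernel contains $s_2$, no argument using only relations inside $\langle s_1,s_2\rangle$ can possibly work, which explains why both proofs must produce conjugators lying outside this subgroup.
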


\begin{proof}

Assume that such a $\rho$ exists. Let 
\[
A\coloneqq \left( \begin{array}{ccc}
1 & 0 &0\\
0& 1 &0\\
0& -1& 1\\
 \end{array} \right),
 B\coloneqq \left( \begin{array}{ccc}
-1 & 1 &0\\
0& 0&1\\
1& 0& 0\\
 \end{array} \right), T\coloneqq \left( \begin{array}{ccc}
1 & 0 &0\\
0& -1&0\\
0& 0& -1\\
 \end{array} \right)\in\GL_3(\Z).
\]

One calculates $(A(s_2(Bs_2B^{-1}))A^{-1})=s_1T$. So $s_1T$ is contained in the kernel of $\rho$ and we get $\rho(T)=\rho(s_1^{-1})$. But this is a contradiction since the order of $T$ is 2.
\end{proof}

The following construction comes up in the context of tetrahedral line complexes (see \cite{MR2964027}). Consider the 4 hyperplanes in $\p^3$ 
\[
E_0:=\{x_0=0\},\, E_1:=\{x_1=0\}, \,E_2:=\{x_2=0\}, \,E_3:=\{x_3=0\}.
\]
A line $l\in\mathbb{G}(1,3)$ that is not contained in any of the $E_i$, intersects each plane $E_i$ in one point $p_i$. We thus obtain a rational surjective map
\[
cr\colon\mathbb{G}(1,3)\dashrightarrow\p^1
\]
that is defined by associating to the line $l$ the cross ratio between the points $p_i$.  

The closure $\overline{cr^{-1}([a:b])}$ in $\mathbb{G}(1,3)$ is irreducible if and only if $[a:b]\in \p^1\setminus \{[0:1], [1:0], [1:1]\}$, whereas $\overline{cr^{-1}([a:b])}$ consists of two irreducible components in all the other cases (\cite[Chapter 10.3.6]{MR2964027}).

Recall that  $\alpha$ is the automorphism of $\PGL_4$ given by $g\mapsto {}^tg^{-1}$.

\begin{proposition}\label{nograss}
There exists no non-trivial group homomorphism 
\[
\Phi\colon \left<\PGL_4(\C), \W_3\right>\to\Bir(\mathbb{G}(1,3))
\]
such that $\Phi(\PGL_4(\C))\subset\aut(\mathbb{G}(1,3))$.

In particular, neither the action of $\PGL_4(\C)$ on $\mathbb{G}(1,3)$ given by the embedding $\varphi_G$ (see Example $\ref{grass}$) nor the action given by  $\varphi_G\circ\alpha$ can be extended to a rational action of $\Cr_4$.
\end{proposition}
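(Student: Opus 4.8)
The plan is to argue by contradiction, using the cross-ratio fibration $cr\colon\mathbb{G}(1,3)\dashrightarrow\p^1$ to manufacture a homomorphism out of $\W_3\simeq\GL_3(\Z)$ of exactly the kind forbidden by Lemma \ref{homglz}. So suppose a non-trivial $\Phi$ with $\Phi(\PGL_4(\C))\subset\aut(\mathbb{G}(1,3))$ exists. Since $\PGL_4(\C)$ is simple, $\Phi|_{\PGL_4(\C)}$ is either trivial or injective, and the case relevant to the ``in particular'' statement, on which I focus, is the injective one. As $\PGL_4(\C)$ has no subgroup of index two, its image lands in the identity component $\aut^0(\mathbb{G}(1,3))\simeq\PGL_4(\C)$ of Proposition \ref{autogroup}. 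By Proposition \ref{isopgl}, after conjugating inside $\aut^0(\mathbb{G}(1,3))$ and composing with a field automorphism and possibly the duality $\alpha$, I may assume that $\Phi|_{\PGL_4(\C)}$ is the standard action $\varphi_G$ of Example \ref{grass}.

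First I would realize $cr$ as a torus quotient. The diagonal torus $D_3\subset\PGL_4(\C)$ is maximal, is precisely the subgroup stabilizing each of the four coordinate hyperplanes, and hence leaves $cr$ invariant; since $\dim\mathbb{G}(1,3)=4$ and the generic $D_3$-orbit is three-dimensional, the generic fibre of $cr$ is the closure of a generic $D_3$-orbit, so $cr$ is birationally the rational quotient of $\mathbb{G}(1,3)$ by $D_3$. Because $\norm_{\Cr_3}(D_3)=D_3\rtimes\W_3$, the subgroup $\W_3$ normalizes $D_3$ in $\Cr_3$, whence $\Phi(\W_3)$ normalizes $\Phi(D_3)$ and therefore permutes its orbits. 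Passing to the quotient $\p^1$ yields a homomorphism
\[
\bar\rho\colon\W_3\simeq\GL_3(\Z)\longrightarrow\Bir(\p^1)=\PGL_2(\C).
\]
The three reducible fibres of $cr$, lying over $[0:1],[1:0],[1:1]$, form a triple intrinsic to the fibration, so $\bar\rho(\W_3)$ permutes it and $\bar\rho$ lands in the copy of $\mathcal{S}_3\subset\PGL_2(\C)$ stabilizing this triple, i.e. in the anharmonic group.

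Finally I would evaluate $\bar\rho$ on the two matrices of Lemma \ref{homglz}. Working out the monomial maps, $s_1$ is the permutation $[x_0:x_1:x_2:x_3]\mapsto[x_0:x_3:x_1:x_2]$, a three-cycle of the hyperplanes fixing $E_0$, and $s_2$ is $[x_0:x_1:x_2:x_3]\mapsto[x_2:x_3:x_0:x_1]$, the double transposition $(E_0E_2)(E_1E_3)$; both therefore lie in the finite Weyl group $\mathcal{S}_4\subset\PGL_4(\C)\cap\W_3$. Their effect on the cross-ratio is the classical $\mathcal{S}_4$-action on the six values, which factors through $\mathcal{S}_4\to\mathcal{S}_3$ with kernel the Klein four-group of double transpositions. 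Hence $\bar\rho(s_1)$ is a three-cycle, of order three, while $\bar\rho(s_2)=\id$, so $s_2\in\ker\bar\rho$. This is precisely the situation excluded by Lemma \ref{homglz} with $G=\PGL_2(\C)$, a contradiction. Note that only the values of $\bar\rho$ on the permutation matrices $s_1,s_2$ are needed explicitly, whereas Lemma \ref{homglz} exploits the relations tying them to the non-permutation elements of $\GL_3(\Z)$ — which is exactly why $\Phi$ must be prescribed on all of $\langle\PGL_4(\C),\W_3\rangle$. The ``in particular'' claim then follows, since both $\varphi_G$ and $\varphi_G\circ\alpha$ would furnish such a $\Phi$.

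The main obstacle is making the descent $\bar\rho$ rigorous: one must verify that $cr$ genuinely is the rational quotient of $\mathbb{G}(1,3)$ by $D_3$, so that the normalization of $\Phi(D_3)$ by $\Phi(\W_3)$ induces an honest $\PGL_2(\C)$-action on the base $\p^1$ rather than a merely set-theoretic permutation of fibres. By contrast, the ambiguity in $\Phi|_{\PGL_4(\C)}$ coming from field automorphisms and the duality $\alpha$ is harmless at the two points that matter: $s_1$ and $s_2$ are permutation matrices, hence fixed by every field automorphism and by $\alpha$ (as ${}^{t}s^{-1}=s$ for a permutation matrix), so the orders of $\bar\rho(s_1)$ and $\bar\rho(s_2)$ are unaffected. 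A last, minor point is to dispose of the degenerate possibility that $\Phi|_{\PGL_4(\C)}$ is trivial; there $\Phi$ kills $\mathcal{S}_4$, and one checks separately that a homomorphism from $\GL_3(\Z)$ annihilating $\mathcal{S}_4$ must vanish, contradicting non-triviality of $\Phi$.
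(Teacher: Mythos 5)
Your proof follows the same strategy as the paper's: reduce to the standard action on $\mathbb{G}(1,3)$, realize the cross-ratio map $cr$ as the rational quotient of $\mathbb{G}(1,3)$ by the torus $\Phi(D_3)$, use the fact that $\W_3$ normalizes $D_3$ to descend $\Phi|_{\W_3}$ to a homomorphism $\bar\rho\colon\GL_3(\Z)\to\PGL_2(\C)$, and then contradict Lemma \ref{homglz} by evaluating on $s_1,s_2$. In fact, your execution of the key computation is the correct one: the monomial maps attached to $s_1$ and $s_2$ are indeed the coordinate permutations $[x_0:x_3:x_1:x_2]$ (a $3$-cycle of the planes $E_1,E_2,E_3$) and $[x_2:x_3:x_0:x_1]$ (the double transposition $(E_0E_2)(E_1E_3)$), whose induced actions on the cross ratio have order $3$ and order $1$ respectively --- exactly the hypotheses of Lemma \ref{homglz}. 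The paper's printed proof instead works with the two transpositions $\tau_1=[x_3:x_1:x_2:x_0]$ and $\tau_2=[x_2:x_1:x_0:x_3]$; since single transpositions act on the cross ratio through the quotient $\s_4\to\s_3$ as involutions (the kernel being the Klein group of double transpositions), the claims made there --- that $\tau_1$ fixes $cr$, that $\tau_2$ acts with order $3$, and that their matrices in $\GL_3(\Z)$ are $s_1$ and $s_2$ (impossible already for reasons of order) --- cannot be right as stated, and your choice of elements repairs this. Your closing remark that $s_1,s_2$ are fixed by $\alpha$ and by field automorphisms is also needed and sound: $\alpha$ does not extend to $\left<\PGL_4(\C),\W_3\right>$ (Corollary \ref{autgn}), so you cannot literally precompose with it, but the insensitivity of the two evaluations to $\alpha$ and to field automorphisms makes the cases $\varphi_G$ and $\varphi_G\circ\alpha$ uniform, just as in the paper.

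The one genuine weak point is your treatment of the degenerate case $\PGL_4(\C)\subset\ker\Phi$. Your deferred check --- that a homomorphism from $\GL_3(\Z)$ annihilating the embedded $\s_4$ must vanish --- amounts to proving that the normal closure of $\s_4$ in $\GL_3(\Z)$ is the whole group. This is true, but it is not a routine verification: the natural argument passes through the congruence subgroup property of $\SL_3(\Z)$ (or Margulis' normal subgroup theorem together with perfectness of the finite quotients), which is far heavier than anything else in the proof. The paper disposes of this case elementarily: by the proof of Corollary \ref{hninkernel}, the normal closure of $\PGL_4(\C)$ in $\left<\PGL_4(\C),\W_3\right>$ already contains $\sigma_3$ and $f_B$, hence equals the whole group by Lemma \ref{gengn}, so such a $\Phi$ is trivial. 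You should invoke that corollary (i.e., use all of $\PGL_4(\C)$ in the kernel, not just $\s_4$) rather than defer to an unproved statement about $\GL_3(\Z)$.
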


\begin{proof}
The proof of Corollary \ref{hninkernel} implies that if $\PGL_4(\C)$ is contained in the kernel of a homomorphism $\Phi\colon\left<\PGL_4(\C), \W_3\right>\to\Bir(\mathbb{G}(1,3))$, then $\Phi$ is trivial. So we may assume that $\Phi$ embeds $\PGL_4(\C)$ into $\aut^0(\mathbb{G}(1,3))$. By Theorem \ref{pglmain}, it is therefore enough to show that $\varphi_G$ and $\varphi_G\circ\alpha$ do not extend to a homomorphism of $ \left<\PGL_4(\C), \W_3\right>$.

The $\varphi_G(D_3)$-orbit of a line that is not contained in one of the planes $E_i$ and that does not pass through any of the points $[1:0:0:0], [0:1:0:0], [0:0:1:0], [0:0:0:1]$, has dimension $3$ and these are all $\varphi_G(D_3)$-orbits of dimension $3$. 

Since $\varphi_G(D_3)$ stabilizes the hyperplanes $E_i$ and since the cross ratio is invariant under linear transformations, we obtain that $cr$ is $\varphi_G(D_3)$-invariant. By the above remark, the rational map $cr$ therefore parametrizes all but finitely many $\varphi_G(D_3)$-orbits of dimension 3 by $\p^1\setminus \{[0:1], [1:0], [1:1]\}$.

The image $\varphi_G(\s_4)$, where $\s_4\subset\PGL_4$ is the subgroup of coordinate permutations, normalizes $\varphi_G(D_3)$ and therefore it permutes its 3-dimensional orbits. Since $\s_4$ permutes the hyperplanes $E_i$, we can describe its action on the 3-dimensional $\varphi_G(D_3)$-orbits by its action on the cross ratio of the intersection of general lines with the planes $E_i$.

Let $r$ be the cross ratio between the points $p_0, p_1, p_2, p_3$ on a line. One calculates that the cross ratio between $p_3, p_1, p_2, p_0$ is again $r$ and that the cross ratio between the points $p_2, p_1, p_2, p_3$ is $\frac{1}{1-r}$.  Hence the image of $\tau_1:=[x_3:x_1:x_2:x_0]$ leaves $cr$ invariant, whereas for the permutation $\tau_2:=[x_2:x_1:x_0:x_3]$ we have $cr\circ\varphi(\tau_2)\neq cr$ and $cr\circ\varphi(\tau_2)^2\neq cr$. 

Let $f\colon\mathbb{G}(1,3)\dashrightarrow\p^4$ be a birational transformation and let $\varphi_G'\colon=f\circ\varphi_G\circ f^{-1}$. The image $\varphi_G'(D_3)\subset \Cr_4$ is an algebraic torus of rank 3 and therefore, by Proposition \ref{poptori}, conjugate to the standard subtorus $D_3\subset D_4$ of rank 3. In other words, there exists a rational map $\p^4\dashrightarrow \p^1$
whose fibers consist of the closure of the $\varphi_G'(D_3)$-orbits. The image $\varphi_G'(\s_4)$ permutes the torus orbits, hence we obtain a homomorphism
$\rho\colon\s_4\to\PGL_2(\C).$
By what we observed above, the permutation $\tau_1$ is contained in the kernel of $\rho$, whereas the image $\rho(\tau_2)$ has order 3. The matrix representation in $\GL_3(\Z)$ of $\tau_1$ corresponds to $s_1$ and the matrix representation of $\tau_2$ corresponds to $s_2$.

It follows now from Lemma \ref{homglz} that $\rho$ can not be extended to a homomorphism from $\GL_3(\Z)\simeq\W_3$ to $\PGL_2(\C)$, which implies that there exists no homomorphism $\Phi\colon\left<\PGL_4(\C), \W_3\right>\to \Cr_4$ such that $\Phi(\PGL_4(\C))=\varphi'_G(\PGL_4(\C))$, since $\W_3$ normalizes the torus and its image would therefore permute the torus orbits as well. The statement follows.
\end{proof}

\subsection{The case $\p(T\p^2)$}

Recall that matrices of order two in $\PGL_2(\C)$  have the form
\begin{equation}\label{order2}
 \left[ \begin{array}{cc}
0 &  1\\
a & 0 \\
 \end{array} \right]
,\text{ or }
\left[ \begin{array}{cc}
1 &  b\\
c & -1 \\
 \end{array} \right],\, \text{ where } a\in\C^*,\, b,c\in\C,\, bc\neq-1.
\end{equation}

\begin{proposition}\label{varphib}
The embedding $\varphi_B\colon \PGL_3(\C)\to\Bir(\p(T\p^2))$ extends uniquely to an embedding\[
\Phi_B\colon\Cr_2\to\Bir(\p(T\p^2)).
\]
\end{proposition}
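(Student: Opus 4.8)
The plan is to take $\Phi_B:=\Psi_B$, the homomorphism of Example~\ref{defb}, for existence, and then to prove that $\varphi_B$ admits no other extension. Throughout, $\pi\colon\p(\T\p^2)\to\p^2$, $(p,v)\mapsto p$, denotes the $\PGL_3(\C)$-equivariant projection, and $\sigma$ the standard quadratic involution.

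\emph{Existence.} The assignment $\Psi_B(f)(p,v)=(f(p),\p(df_p)(v))$ is a group homomorphism by the chain rule: from $d(f\circ g)_p=df_{g(p)}\circ dg_p$ we get $\p(d(f\circ g)_p)=\p(df_{g(p)})\circ\p(dg_p)$, hence $\Psi_B(f\circ g)=\Psi_B(f)\circ\Psi_B(g)$. Its restriction to $\PGL_3(\C)$ is $\varphi_B$ and it is injective by Example~\ref{defb}. Thus $\Phi_B=\Psi_B$ is an embedding extending $\varphi_B$.

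\emph{Uniqueness, reduction step.} By the theorem of Noether and Castelnuovo, $\Cr_2=\left<\PGL_3(\C),\sigma\right>$, so any extension $\Phi$ of $\varphi_B$ is completely determined by the single birational involution $\Phi(\sigma)$. Given two such extensions $\Phi,\Phi'$, set $\eta:=\Phi(\sigma)^{-1}\Phi'(\sigma)$; it suffices to prove $\eta=\id$. Now $\sigma$ normalises the diagonal torus $D_2\subset\PGL_3(\C)$, acting on it by inversion, and commutes with the subgroup $\s_3\subset\PGL_3(\C)$ of coordinate permutations; together these generate $\norm_{\PGL_3(\C)}(D_2)=D_2\rtimes\s_3$. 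Hence $\sigma h\sigma^{-1}\in\PGL_3(\C)$ for every $h\in D_2\rtimes\s_3$, so
\[
\Phi(\sigma)\varphi_B(h)\Phi(\sigma)^{-1}=\varphi_B(\sigma h\sigma^{-1})=\Phi'(\sigma)\varphi_B(h)\Phi'(\sigma)^{-1},
\]
which shows that $\eta$ centralises $\varphi_B(D_2\rtimes\s_3)$.

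\emph{Uniqueness, rigidity step.} It remains to prove that the centraliser of $\varphi_B(D_2\rtimes\s_3)$ in $\Bir(\p(\T\p^2))$ is trivial. I would analyse the orbit structure of the rank-$2$ torus $\varphi_B(D_2)$ on the threefold $\p(\T\p^2)$: its generic orbits have codimension $1$, so, exactly as the cross-ratio $cr$ parametrised the generic $D_3$-orbits in Proposition~\ref{nograss}, they are the fibres of a rational quotient $q\colon\p(\T\p^2)\dashrightarrow B$ onto a rational curve. Any $\eta$ centralising $\varphi_B(D_2)$ must preserve every orbit closure, descend to an automorphism of $B$, and act on a generic orbit (a $D_2$-torsor) by a torus translation depending on the orbit; additionally commuting with $\varphi_B(\s_3)$ forces these data to be equivariant for the Weyl action of $\s_3$ on $D_2$ and on $B$. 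Since $\s_3$ acts on the character group of $D_2$ as the reflection representation of type $A_2$, which has no nonzero invariants, I expect this equivariance to collapse the translation datum and trivialise the induced automorphism of $B$, giving $\eta=\id$ and hence $\Phi(\sigma)=\Phi'(\sigma)$.

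The main obstacle is precisely this last, rigidity step: the bare centraliser of the torus $\varphi_B(D_2)$ is large (it contains translations by arbitrary rational $D_2$-valued functions on $B$), and the whole point is that imposing commutation with $\varphi_B(\s_3)$ cuts it down to the identity. Everything else is either the chain rule or a formal consequence of the Noether–Castelnuovo presentation together with the normaliser relation for $\sigma$. Once $\eta=\id$ is established, any extension of $\varphi_B$ coincides with $\Phi_B$, and $\Phi_B$ is an embedding by Example~\ref{defb}.
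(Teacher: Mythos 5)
Your existence step and your reduction step are both correct: any extension is determined by $\Phi(\sigma)$, and the difference $\eta=\Phi(\sigma)^{-1}\Phi'(\sigma)$ of two extensions does centralise $\varphi_B(D_2\rtimes\s_3)=\varphi_B(\norm_{\PGL_3(\C)}(D_2))$. The gap is the rigidity step, and it is not just unproved but false: the centraliser of $\varphi_B(D_2\rtimes\s_3)$ in $\Bir(\p(\T\p^2))$ is infinite. To see this, use the coordinates of the paper's proof: after the birational change $\phi\colon(x_1,x_2,x_3)\mapsto(x_1,x_2,\tfrac{x_1}{x_2}x_3)$ one gets affine coordinates $(X,Y,z)$ on $\p^2\times\p^1$ in which $\varphi_B(D_2)$ acts by $(X,Y,z)\mapsto(aX,bY,z)$, while the transpositions $[x_0:x_2:x_1]$ and $[x_2:x_1:x_0]$ act by $(X,Y,z)\mapsto(Y,X,1/z)$ and $(X,Y,z)\mapsto(X/Y,1/Y,z/(z-1))$ respectively. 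Then the infinite-order birational map
\[
\eta\colon(X,Y,z)\mapsto\left(\frac{zX}{1-z},\;\frac{Y}{z-1},\;z\right)
\]
commutes with all of these: writing $\eta=(Xf_1(z),Yf_2(z),z)$ with $f_1=\tfrac{z}{1-z}$, $f_2=\tfrac{1}{z-1}$, one checks $f_1(1/z)=f_2(z)$, $f_2(1/z)=f_1(z)$, $f_1\!\left(\tfrac{z}{z-1}\right)=f_1(z)/f_2(z)=-z$ and $f_2\!\left(\tfrac{z}{z-1}\right)=1/f_2(z)=z-1$. Conjugating back by $\phi$ gives a nontrivial element of $\Bir(\p(\T\p^2))$ centralising $\varphi_B(D_2\rtimes\s_3)$.

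The reason your heuristic fails is that ``no nonzero invariants in the reflection representation'' only kills \emph{constant} translation data; what has to be excluded are $\s_3$-equivariant \emph{rational} maps $\p^1\dashrightarrow D_2$, and these form the group of $L$-points of a nonsplit two-dimensional torus over $L=\C(z)^{\s_3}$, which is Zariski dense (the map $\eta$ above is a Hilbert-90-type element of this group). Consequently no argument using only relations inside $\left<\norm_{\PGL_3(\C)}(D_2),\sigma\right>$ can prove uniqueness: the candidate $\Psi_B(\sigma)\circ\eta$ is an involution, inverts the torus, commutes with $\s_3$, yet differs from $\Psi_B(\sigma)$. It can only be ruled out by a relation mixing $\sigma$ with elements of $\PGL_3(\C)$ outside the torus normaliser, such as $(\sigma h)^3=\id$ for $h=[x_2-x_0:x_2-x_1:x_2]$ (Lemma \ref{relations}). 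That is exactly the extra input in the paper's proof: there one shows that $\psi_1(\sigma)$ preserves the vertical $\p^1$-fibration over $\p^2$ and then invokes Corollary \ref{autgn} --- whose proof rests on this order-three relation --- to force the induced action on the base to be the standard $\sigma$, which together with triviality on the horizontal base pins $\psi_1(\sigma)$ down completely. Your scheme never uses any relation of this kind, so it cannot close.
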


\begin{proof}
It is enough to show that every extension coincides withe one given in Example \ref{defb}. For this it is enough to show that the image of $\sigma$ is uniquely determined. Assume that there is an extension $\psi\colon\Cr_2\to\Bir(\p(T\p^2))$ of $\varphi_B$. We will show $\psi=|\Psi_B$.

Let $d\in D_2$, $d=(ax_1,bx_2)$ with respect to affine coordinates  given by $x_0=1$. Then
$ \varphi_B(d)=(ax_1,bx_2, (b/a)x_3)$,
with respect to suitable local affine coordinates of $\p(T\p^2)$.
Let 
$\phi\colon\p(T\p^2)\dasharrow\p^2\times\p^1$ 
be the birational map given by
\[
\phi\colon (x_1, x_2, x_3)\mapsto (x_1,x_2,\frac{x_1}{x_2}x_3),
\]
with respect to local affine coordinates.

Let $\psi_1\colon\Cr_2\to\Bir(\p^2\times\p^1)$ be the algebraic embedding $\psi_1=\phi\circ\psi\circ\phi^{-1}$. This gives us a $\p^2$-fibration, which we call the horizontal fibration, and a $\p^1$-fibration, which we call the vertical fibration. The image $\psi_1(D_2)$ acts canonically on the first factor and leaves the second one invariant. The horizontal fibers thus consist of the closures of $D_2$-orbits. Since $\W_2$ normalizes $D_2$, the image $\psi_1(\W_2)$ permutes the orbits of $\psi_1(D_2)$. Hence it preserves the horizontal fibration and we obtain a homomorphism
\[
\rho\colon W_2\simeq\GL_2(\Z)\to\Bir(\p^1)=\PGL_2(\C). 
\]
In what follows we identify $\W_2$ with $\GL_2(\Z)$.

The images of the three transpositions in $\s_3=\W_2\cap\PGL_3(\C)$ under $\rho$ are:
\[
\rho\left(\begin{array}{cc}
0 & 1 \\
1 & 0 \\
 \end{array} \right)=\left[\begin{array}{cc}
0 & 1 \\
1 & 0 \\
 \end{array} \right], \rho\left( \begin{array}{cc}
1 & -1 \\
0 & -1 \\
 \end{array}\right)=\left[ \begin{array}{cc}
1 & -1 \\
0 & -1 \\
 \end{array}\right]
\]
\[
\text{ and }  \rho\left( \begin{array}{cc}
-1 & 0 \\
-1& 1 \\
 \end{array}\right)=\left[ \begin{array}{cc}
-1 & 0 \\
-1 & 1 \\
 \end{array}\right].
\]

The image $\rho(\sigma)$ is either the identity or it has order 2. The elements of the form~(\ref{order2}) do not commute with the images of $\s_3$ described above. Since $\sigma$ is contained in the center of $\W_2$, we obtain $\rho(\sigma)=\id$. 

It remains to show that the action of $\psi_1(\sigma)$ on the first factor of $\p^2\times\p^1$ is the standard action.
Let $M= \p^2$ be a horizontal fiber. It is stabilized by $\psi(D_2)$ and $\psi(\sigma)$, so we obtain a homomorphism 
\[
\gamma\colon\left<D_2,\sigma\right>\to \Bir(M)=\Cr_2.
\] 
Since $\sigma d\sigma^{-1}=d^{-1}$ for all $d\in D_2$, there exists a $d\in D_2$ such that $\gamma(\sigma)=d\sigma$. This is true for all horizontal fibers, so $\psi_1(\sigma)$ induces an automorphism of $U\times \p^1$, where 
 \[
 U=\{[x_0:x_1:x_2]\mid x_0, x_1, x_2\neq 0\}\subset\p^2.
 \] 

Let $S\simeq\p^1\subset U\times\p^1$ be a vertical fiber and $\pi\colon U\times\p^1\to U$ the projection onto the first factor. Then $\pi\circ\psi_1(\sigma)(S)$ is a regular map from $\p^1$ to the affine set $U$ and is therefore constant. We obtain that $\psi_1(\sigma)$ preserves the vertical fibration.

The image $\psi_1(\PGL_3(\C))$ preserves the vertical fibration as well and projection onto $\p^2$ yields a homomorphism from $\PGL_3(\C)$ to $\Cr_2$ that is the standard embedding. Hence $\psi_1(\Cr_2)$ preserves the vertical fibration and we  obtain an algebraic homomorphism from $\Cr_2$ to $\Cr_2$, which is uniquely determined by its restriction to $\PGL_3(\C)$. So the image $\psi_1(\sigma)$ is uniquely determined.
\end{proof}

\begin{lemma}
There exists no homomorphism $\Phi\colon\Cr_2\to\Cr_3$ such that 
\[
\Phi|_{\PGL_3(\C)}=\varphi_B\circ\alpha,
\]
where $\varphi_B$ denotes the embedding of $\PGL_3$ into $\Cr_3$ from Example $\ref{defb}$ and $\alpha$ the algebraic automorphism of $\PGL_3$ given by $g\mapsto {}^tg^{-1}$.
\end{lemma}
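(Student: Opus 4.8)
The plan is to transcribe the proof of Proposition \ref{varphib} almost word for word and to isolate the single point at which the automorphism $\alpha$ intervenes, which will turn out to be fatal. So suppose, for contradiction, that an extension $\Phi\colon\Cr_2\to\Bir(\p(\T\p^2))=\Cr_3$ of $\varphi_B\circ\alpha$ exists. Since $\alpha$ is an automorphism of $\PGL_3(\C)$, its image $\Phi(\PGL_3(\C))=\varphi_B(\alpha(\PGL_3(\C)))=\varphi_B(\PGL_3(\C))$ still acts regularly on $\p(\T\p^2)$, so I may pass through the same birational chart $\phi\colon\p(\T\p^2)\dashrightarrow\p^2\times\p^1$ as in that proof. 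For a diagonal element $d\in D_2$ one has $\alpha(d)=d^{-1}$, hence $\Phi(d)=\varphi_B(d^{-1})$; the same computation as for $\varphi_B$ shows that, after conjugation by $\phi$, the torus $\Phi(D_2)$ acts on the first factor of $\p^2\times\p^1$ and trivially on the second, so the horizontal fibres are again the closures of the $D_2$-orbits.

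First I would rebuild the homomorphism $\rho\colon\W_2\to\PGL_2(\C)$ recording how $\W_2$ permutes these orbits. The crucial observation is that $\alpha$ fixes every permutation matrix, because ${}^tP^{-1}=P$ for such $P$; therefore $\Phi$ agrees on $\s_3=\W_2\cap\PGL_3(\C)$ with the extension $\Phi_B$ of Proposition \ref{varphib}, and the images $\rho(\tau)$ of the three transpositions are exactly the order-two matrices found there. Since $\sigma$ is central in $\W_2$ and no element of order two in $\PGL_2(\C)$ commutes with those $\rho(\tau)$, I again obtain $\rho(\sigma)=\id$. From here the argument of Proposition \ref{varphib} applies verbatim: the relation $\sigma d\sigma^{-1}=d^{-1}$ together with $\rho(\sigma)=\id$ forces $\Phi(\sigma)$ to preserve the vertical fibration, and as $\PGL_3(\C)$ and $\sigma$ generate $\Cr_2$, the whole image $\Phi(\Cr_2)$ preserves it.

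The decisive step is the projection onto the base. Projecting $\p^2\times\p^1$ onto its first factor produces an algebraic homomorphism $\bar\Phi\colon\Cr_2\to\Bir(\p^2)=\Cr_2$, and this is where the dual embedding shows its teeth: the bundle projection $\p(\T\p^2)\to\p^2$ intertwines $\varphi_B(\alpha(g))$ upstairs with $\alpha(g)={}^tg^{-1}$ on the base, so that $\bar\Phi|_{\PGL_3(\C)}=\alpha$ is the \emph{dual} embedding instead of the standard one. Now $\bar\Phi$ is a non-trivial endomorphism of $\Cr_2$ (it is injective on $\PGL_3(\C)$) which is algebraic, since its restriction to $\PGL_3(\C)$ is the morphism $\alpha$. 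By the classification of endomorphisms of $\Cr_2$ established in the Appendix (Corollary \ref{autgn}, a consequence of Theorem \ref{thmserge} and D\'eserti's theorem), the restriction to $\PGL_3(\C)$ of any non-trivial endomorphism of $\Cr_2$ is conjugate, up to a field automorphism, to the standard embedding; thus there are $h\in\Cr_2$ and a field automorphism $\gamma$ with $h\,\alpha(\gamma(g))\,h^{-1}=g$ for all $g\in\PGL_3(\C)$. As $\alpha$ and $\gamma$ are automorphisms, the left-hand side ranges over all of $\PGL_3(\C)$ as $g$ does, so $h$ normalises $\PGL_3(\C)=\aut(\p^2)$ and conjugation by $h$ is an algebraic automorphism of $\PGL_3(\C)$. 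Since $\alpha$ is algebraic, this forces $\gamma=\id$; the relation then reads $h\,\alpha(g)\,h^{-1}=g$ for all $g$, exhibiting $\alpha$ as an inner automorphism of $\PGL_3(\C)$. This contradicts the fact that $g\mapsto{}^tg^{-1}$ is the outer automorphism, and the lemma follows.

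I expect the final paragraph to be the only real obstacle: everything preceding the construction of $\bar\Phi$ is a faithful copy of the proof of Proposition \ref{varphib}, and the entire content is the observation that the outer automorphism $\alpha$, although invisible to the $\PGL_3(\C)$-level classification of Theorem \ref{thmserge} (where automorphisms of $\PGL_3(\C)$ are permitted), becomes a genuine obstruction once one demands an extension to all of $\Cr_2$. The delicate point to pin down is therefore the exact form of Corollary \ref{autgn}; should one wish to bypass it, the same contradiction can be reached by checking that no value of $\bar\Phi(\sigma)$ compatible with $\bar\Phi|_{\PGL_3(\C)}=\alpha$ can satisfy a Noether--Castelnuovo relation of the shape $(\sigma h)^3=\id$.
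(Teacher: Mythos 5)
Your reduction is sound up to and including the construction of the base homomorphism: the observation that $\alpha$ fixes $\s_3$ pointwise and preserves $D_2$ lets the fibration analysis of Proposition \ref{varphib} run unchanged, so $\Phi(\Cr_2)$ preserves the vertical fibration and projection gives an algebraic homomorphism $\bar\Phi\colon\Cr_2\to\Cr_2$ with $\bar\Phi|_{\PGL_3(\C)}=\varphi\circ\alpha$. At this point you could finish in one line: the \emph{second} assertion of Corollary \ref{autgn} states verbatim that $\varphi\circ\alpha$ does not extend to a homomorphism from $\G_n$ to $\Cr_n$, and $\G_2=\Cr_2$ by Noether--Castelnuovo; this is exactly how the paper itself disposes of $\varphi_l\circ\alpha$ and $\varphi_C\circ\alpha$ in the $F_l$ and $C\times\p^n$ cases. (The paper's own proof of the present lemma ends differently: using $\alpha(D_2)=D_2$ and $\alpha|_{\s_3}=\id$ it pins down $\Psi(\sigma)=\Phi_B(\sigma)$ exactly, and then checks that the relation $(\sigma g)^3=\id$ for $g=[x_2-x_0:x_2-x_1:x_2]$ fails for the images --- the computation you only mention as a fallback in your last sentence.)

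The genuine gap is in your final paragraph, where instead of quoting that second assertion you try to re-derive it from the first one. From $h\,\alpha(\gamma(g))\,h^{-1}=g$ for all $g\in\PGL_3(\C)$, with $h\in\Cr_2$, you conclude that $\alpha$ is ``an inner automorphism of $\PGL_3(\C)$''. This does not follow: $h$ is only known to lie in $\Cr_2$, and an automorphism of a subgroup that is realized by conjugation inside a larger group need not be inner --- compare $\sigma$, which normalizes $D_2$ and induces on it the inversion automorphism, which is not inner in $D_2$. For the same reason your intermediate claim that conjugation by $h$ is an \emph{algebraic} automorphism of $\PGL_3(\C)$ is unjustified at that stage. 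What is missing is the argument that $h$ must itself lie in $\aut(\p^2)$: the identity $h\circ(\alpha\circ\gamma)(g)=g\circ h$ makes $h$ equivariant for two regular actions of the full group $\aut(\p^2)$ on $\p^2$ (the image of $\alpha\circ\gamma$ is all of $\PGL_3(\C)$, acting transitively), so by Lemma \ref{exci} the indeterminacy loci of $h$ and of $h^{-1}$ are invariant under a transitive action and hence empty, giving $h\in\aut(\p^2)=\PGL_3(\C)$. Only then does the relation exhibit $\alpha\circ\gamma$ as inner, whence $\gamma$ is algebraic, $\gamma=\id$, and $\alpha$ is inner --- contradicting that $g\mapsto{}^tg^{-1}$ is the non-trivial outer automorphism. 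With this step supplied (or simply replaced by the citation of Corollary \ref{autgn}), your proof is correct.
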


\begin{proof}
Assume that such an extension $\Phi\colon \Cr_2\to\Cr_3$ of $\varphi_B\circ\alpha$ exists. 

Observe that $\alpha(D_2)=D_2$ and that $\alpha|_{\s_3}=\id_{\s_3}$. Therefore, we can repeat the same argument as in the proof of Proposition \ref{varphib} to obtain $\Psi(\sigma)=\Phi_B(\sigma)$. But we have 
\[
\Psi(\sigma) \Psi(g)\Psi(\sigma)\Psi( g)\Psi(\sigma) \Psi(g)\neq\id
\]
for $g=[z-x: z-y:z]$ - this contradicts the relations in $\Cr_2$ (Proposition \ref{relations}).
\end{proof}

\subsection{The case $\p(\mathcal{O}_{\p^n}\oplus\mathcal{O}_{\p^n}(-k(n+1)))$}

\begin{proposition}\label{phil}
The algebraic homomorphism $\varphi_l\colon \PGL_{n+1}(\C)\to\Bir(F_l)$ extends uniquely to the embedding
\[
\Psi_l\colon\left<\PGL_{n+1}(\C), \W_n\right>\to\Bir(F_l) \text{ (see Example $\ref{defl}$). }
\]
\end{proposition}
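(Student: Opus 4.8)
The existence of an extension is immediate: by Example \ref{defl} the formula $\Psi_l(f)=(f(x_1,\dots,x_n),J(f)^{-l}x_{n+1})$ already defines a homomorphism $\Cr_n\to\Bir(F_l)$ whose restriction to $\PGL_{n+1}(\C)$ is $\varphi_l$, so restricting $\Psi_l$ to $\G_n$ gives an extension. The whole point is uniqueness, and my plan is to exploit two fibrations on $F_l$. Identify $F_l$ birationally with $\p^n\times\p^1$ via $(x_1,\dots,x_n,x_{n+1})\mapsto((x_1,\dots,x_n),t)$, where $t:=x_{n+1}(x_1\cdots x_n)^l$; let $p$ be the projection to $\p^n$ (the $\p^1$-bundle) and $q$ the projection to the $t$-line. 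A direct computation, using that the monomial map $f_A$ has Jacobian determinant $(\det A)\prod_j x_j^{c_j-1}$ with $c_j$ the $j$-th column sum of $A$, shows that in this model $\Psi_l(f_A)=f_A\times[t\mapsto(\det A)^l t]$ for $f_A\in\W_n$, whereas $\Psi_l(d)$ acts as $d$ on the base and trivially on $t$ for $d\in D_n$; in particular $t$ is $D_n$-invariant. Now let $\Psi$ be any extension of $\varphi_l$. Since $\W_n$ normalizes $D_n$, the group $\Psi(\W_n)$ permutes the $\Psi(D_n)$-orbits, and these orbits are exactly (base $D_n$-orbit)$\times\{t\}$ and sweep out the fibres of $q$; hence $\Psi(\W_n)$ preserves $q$ and induces a homomorphism $\rho\colon\W_n\to\Bir(\p^1)=\PGL_2(\C)$.

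Next I would show that $\Psi(\G_n)$ also preserves the bundle fibration $p$, with induced action the standard one. Unlike $q$, the fibration $p$ is canonically attached to $F_l$: it is the $\varphi_l(\PGL_{n+1}(\C))$-equivariant projection onto $\p^n$ (its fibre class spans the fibre-type extremal ray, equivalently $\C(\p^n)\subset\C(F_l)$ is the distinguished $\varphi_l(\PGL_{n+1}(\C))$-stable subfield of transcendence degree $n$), and $\PGL_{n+1}(\C)$ does not preserve $q$. Granting that $p$ is $\Psi(\G_n)$-invariant, projection along $p$ yields a homomorphism $\bar\Psi\colon\G_n\to\Bir(\p^n)=\Cr_n$ extending the standard inclusion of $\PGL_{n+1}(\C)$; by uniqueness of such an extension (Corollary \ref{autgn}) one gets $\bar\Psi=\id$, i.e. $\Psi(f_A)$ covers $f_A$ on the base. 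A birational self-map of $\p^n\times\p^1$ preserving both projections is a product, so $\Psi(f_A)=f_A\times\rho(f_A)$, and it remains only to identify $\rho$.

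Finally I would pin down $\rho$. From $\Psi|_{\s_{n+1}}=\varphi_l|_{\s_{n+1}}$, where $\s_{n+1}=\W_n\cap\PGL_{n+1}(\C)$, the computation above gives $\rho(\tau)=[t\mapsto(-1)^l t]$ for every transposition $\tau$, so $\rho|_{\s_{n+1}}$ equals $(\det)^l$ landing in $\{t\mapsto\pm t\}$. For $n\ge 3$ the kernel $\SL_n(\Z)=\ker(\det)$ is perfect and rigid, so every homomorphism $\GL_n(\Z)\to\PGL_2(\C)$ is trivial on $\SL_n(\Z)$; hence $\rho$ factors through $\det$ as $f_A\mapsto[t\mapsto(\det A)^l t]$. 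For $n=2$, where $\G_2=\Cr_2$ and the decisive generator is the central element $\sigma_2=-I\in\W_2$, I would instead argue as in Proposition \ref{varphib}, using the centrality of $\sigma_2$ and the explicit images of $\s_3$ to force $\rho(\sigma_2)$. Comparing with the previous paragraph then gives $\Psi(f_A)=f_A\times[t\mapsto(\det A)^l t]=\Psi_l(f_A)$, which is uniqueness.

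The two steps where softness genuinely fails, and hence the main obstacles, are: (i) proving that the bundle fibration $p$ is preserved by the birational (not biregular) maps $\Psi(f_A)$, which rests on the canonicity of $p$ among $\varphi_l(\PGL_{n+1}(\C))$-equivariant fibrations; and (ii) the group-theoretic rigidity statement that a homomorphism $\W_n=\GL_n(\Z)\to\PGL_2(\C)$ compatible with the given $\s_{n+1}$-data must factor through $\det$. The latter is the heart of the argument and is precisely where the case $n=2$ has to be separated from $n\ge 3$.
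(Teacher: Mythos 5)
Your skeleton matches the paper's (same birational model $\phi\colon F_l\dashrightarrow\p^n\times\p^1$ with $t=x_{n+1}(x_1\cdots x_n)^l$, same use of $\Psi(D_n)$-orbit closures to see that $\Psi(\W_n)$ preserves the fibration $q$ and to get $\rho\colon\W_n\to\PGL_2(\C)$, same endgame of identifying $\rho$ with $(\det)^l$), but the step you yourself flag as obstacle (i) is a genuine gap, and the justification you sketch for it cannot be repaired as stated. Canonicity of $p$ among $\varphi_l(\PGL_{n+1}(\C))$-equivariant fibrations only forces preservation of $p$ by maps that \emph{normalize} $\Psi(\PGL_{n+1}(\C))$, and elements of $\W_n$ do not normalize $\PGL_{n+1}(\C)$ inside $\G_n$ (conjugating a generic linear map by $\sigma_n$ gives a nonlinear Cremona transformation), so nothing forces $\Psi(f_A)$ to respect a structure attached to the $\PGL_{n+1}$-action. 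The paper's actual mechanism is different and is the heart of its proof: the fixed-point locus of $\Psi(\mathcal{A}_{n+1})$ (even coordinate permutations, which act trivially on the fiber coordinate since their Jacobian is $1$) is exactly the single vertical fiber $L=[1:\dots:1]\times\p^1$; since $\sigma_n=-I$ is central in $\GL_n(\Z)$ it commutes with $\mathcal{A}_{n+1}$, so $\Psi(\sigma_n)$ stabilizes $L$; then, because $\Psi(\sigma_n)$ normalizes $\Psi(D_n)$ and $\Psi(D_n)$ acts transitively on a dense set of vertical fibers containing $L$, the map $\Psi(\sigma_n)$ sends the fiber $d\cdot L$ to $(\sigma_n d\sigma_n^{-1})\cdot L$ and hence preserves all of $p$. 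Preservation for $f_A=(x_1^{-1},x_2,\dots,x_n)$ then comes for free from Blanc--Hed\'en's result that $f_A\in\left<\PGL_{n+1}(\C),\sigma_n\right>$, and $f_B=(x_1x_2,x_3,\dots,x_n)$ is handled by a separate conjugation and fiber-restriction argument; Lemma \ref{gengn} says these elements generate $\G_n$.

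There is a second problem in your identification of $\rho$. For $n\geq 3$ your appeal to rigidity of $\SL_n(\Z)$ (superrigidity plus the congruence subgroup property, to exclude even finite perfect quotients like $A_5\subset\PGL_2(\C)$) is correct but a very heavy tool; more seriously, your $n=2$ fallback ``argue as in Proposition \ref{varphib}'' does not go through. In the $F_l$ setting the $\rho$-images of the transpositions in $\s_3$ are $t\mapsto(-1)^l t$: when $l$ is even these are trivial, so centrality of $\sigma_2$ in $\W_2$ imposes no constraint whatsoever, and when $l$ is odd the anti-diagonal involutions of $\PGL_2(\C)$ \emph{do} commute with $\operatorname{diag}(1,-1)$ in $\PGL_2(\C)$, so the commutation argument of Proposition \ref{varphib} again fails to pin down $\rho$. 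The paper avoids this entirely, uniformly in $n$, by testing the Noether-type relation $(hf_A)^3=\id$ with $h=(1-x_1,x_2,\dots,x_n)$ (together with commutation with a suitable transposition) against the possible order-two elements of $\PGL_2(\C)$; this is also precisely the step that couples the fiber action to the parity of $l$, which any correct proof must do since $\Psi_l$ genuinely depends on $l$. Without (i) and without a working argument for $n=2$, your proof does not establish the proposition.
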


\begin{proof}
Suppose that there is an extension $\psi\colon\G_n\to\Bir(F_{l})$ of $\varphi_l$. We will show that $\psi$ is unique and therefore that $\psi=\Psi_l$.

Let $(x_1,\dots, x_{n-1}, w)$ be local affine coordinates of $F_{l}$ such that for every $g\in\PGL_{n+1}(\C)$ the image $\varphi_l(g)$ acts by
\[
(x_1,\dots, x_{n}, w)\mapsto (g(x_1,\dots, x_{n}), J(g(x_1,\dots, x_{n}))^{-l}w).
\] 
In particular, the image under $\psi$ of $(d_1x_1,\dots, d_{n}x_{n})\in D_{n}$ acts by
\[
(x_1,\dots, x_{n}, w)\mapsto (d_1x_1,\dots, d_{n}x_{n}, (d_1\cdots d_n)^{-l}w).
\]
Define $\phi\colon F_l\to\p^{n}\times\p^1$ by
\[
\phi\colon (x_1,\dots, x_{n}, w)\mapsto (x_1,\dots, x_{n},(x_1\cdots x_{n})^{l}w)
\]
with respect to local affine coordinates.
Let $\psi_1\colon\Cr_{n}\to\Bir(\p^{n}\times\p^1)$ be the algebraic embedding $\psi_1\coloneqq\phi\circ\psi\circ\phi^{-1}$. Then the image $\psi_1(D_{n})$ acts canonically on the first factor and leaves the second one invariant. Since $\W_n$ normalises $D_{n}$, the image $\psi_1(\W_{n})$ permutes the orbits of $\psi_1(D_{n})$. Hence $\psi_1(\W_n)$ preserves the horizontal fibration. This induces a homomorphism
\[
\rho\colon\W_n\simeq\GL_{n}(\Z)\to\PGL_2(\C).
\]
In what follows, we identify $\W_n$ with $\GL_n(\Z)$.
Let $\mathcal{A}_{n+1}\subset\s_{n+1}\subset\PGL_{n+1}(\C)$
 be the subgroup of coordinate permutations $s\in \s_{n+1}$ such that $J(s)=1$. Hence $\mathcal{A}_{n+1}\in\ker(\rho)$. 
Note that the fixed point set of $\psi_1(\mathcal{A}_{n+1})$ is the vertical fiber  
\[
L:=[1:\dots:1]\times \p^1\subset\p^{n}\times\p^1.
\] 

Since $\sigma_{n}$ commutes with $\mathcal{A}_{n+1}$, the image $\psi_1(\sigma_{n})$ stabilises  $L$. The group $\psi_1(D_{n})$ acts transitively on an open dense subset of vertical fibers that contains $L$. Since $\psi_1(\sigma_{n})$ normalizes $\psi_1(D_n)$, we obtain that $\psi_1(\sigma_{n})$ preserves the vertical fibration. Therefore  $\left<\PGL_{n+1}(\C),\sigma_{n}\right>$ preserves the vertical fibration. We obtain a homomorphism
$\left<\PGL_{n+1}(\C),\sigma_{n}\right>\to\Cr_n,$
which is, by Corollary \ref{autgn} and its proof, the standard embedding. 

Let 
\[
f_A=(\frac{1}{x_1},x_2,\dots, x_n).
\]
In \cite{Blanc:2014aa} it is shown that $f_A$ is contained in $\left<\PGL_{n+1}(\C),\sigma_{n}\right>$, which implies that $\psi_1(f_A)$ preserves the vertical fibration and that its action on $\p^{n}$ is the standard action.

Recall that $(hf_A)^3=\id$ for $h=(1-x_1,x_2,\dots, x_{n-1})\in\Cr_{n}$. The image $\psi_1(h)$ is
\[
\psi_1(h)\colon(x_1,\dots, x_{n}, z)\mapsto (1-x_1, x_2,\dots, x_{n}, (-1)^lz).
\]
Denote by $A\in\GL_n(\Z)$ the integer matrix corresponding to $f_A$. We have $\rho(A)=\id$ or $\rho(A)$ is of order two, i.e. has the form (\ref{order2}). 

Suppose that $\rho(A)=\id$. Then 
\[
\psi_1(f_A)\colon (x_1,\dots x_{n}, z)\mapsto (\frac{1}{x_1}, x_2\dots x_{n}, z).
\]
The relation $(hf_A)^3=\id$ then implies that $l$ is even.

Suppose that 
\[
\rho(f_A)=\left[ \begin{array}{cc}
1 &  b\\
c & -1 \\
 \end{array} \right], \text{ where } b,c\in\C, bc\neq-1,\] hence
\[
\psi_1(f_A)\colon (x_1,\dots x_{n}, z)\mapsto \left(\frac{1}{x_1}, x_2\dots x_{n},\frac{z+b}{cz-1}\right)
\]
and therefore
\[
\psi_1(hf_A)= (x_1,\dots x_{n}, z)\mapsto \left(1-\frac{1}{x_1},\dots x_{n},\frac{(-1)^lz+(-1)^lb}{cz-1}\right),
\]
One calculates that if $l$ is even, then the relation $(hf_A)^3=\id$ is not satisfied. So assume that $l$ is odd. This gives
\[
\psi_1(hf_A)^3= (x_1,\dots x_{n}, z)\mapsto \left(x_1,\dots x_{n},\frac{a_1z+a_2}{a_3z-a_4}\right),
\]
where $a_1=3bc-1, a_2=(bc-1)b-2b, a_3=(1-bc)c+2c$ and $a_4=3bc-1$. So $(hf_A)^3=\id$ yields either $l$ odd and $b=c=0$ or $l$ odd and $bc=3$. However, the latter is not possible. Consider the transformation 
\[
\tau=(x_1,\dots, x_{n-2}, x_n, x_{n-1})\in\s_n.
\]
We have $f_A\tau=\tau f_A$. Note that
\[
\psi_1(\tau)\colon(x_1,\dots, x_{n}, z)\mapsto (x_1,\dots, x_{n-2}, x_n, x_{n-1},\dots, x_{n}, (-1)^lz)
\]
and this transformation does not commute with $\left(x_1,\dots x_{n},\frac{a_1z+a_2}{a_3z-a_4}\right)$ in the second case. Hence $c=b=0$ and $l$ is odd.

Finally, assume that
 \[
 \rho(f_A)=\left[ \begin{array}{cc}
0 &  1\\
a & 0 \\
 \end{array} \right], \text{ where } a\in\C^*.
 \]
This implies 
\[
\psi_1(f_A)\colon  (x_1,\dots x_{n}, z)\mapsto \left(\frac{1}{x_1}, x_2\dots x_{n},\frac{1}{az}\right)
\]
and hence $\psi(hf_A)^3\neq \id$. 

We conclude that
\[
\rho(f_A)=\left[ \begin{array}{cc}
1 &  0\\
0 & (-1)^l \\
 \end{array} \right]
 \]
and therefore that the action of $\psi(f_A)$ is uniquely determined by $l$. Hence 
\[
\psi|_{\left<\PGL_n(\C), \sigma_{n-1}\right>}=\Psi_l|_{\left<\PGL_n(\C), \sigma_{n-1}\right>}.
\]

Let $f_B, f_C, f_D$ and $f_E\in\Cr_n$ be as in the proof of Corollary \ref{hninkernel}. By Lemma \ref{gengn} it remains to show that the image $\psi(f_B)$ is uniquely determined. We use once more the relation 
\[
f_B=f_Df_Cf_Ef_D^{-1}.
\] 
Since $\rho(CE)=\id$ and since $f_D$ has order two, we obtain $\rho(B)=\id$. 

Let $c\in\p^1$ such that the restriction of $\psi_1(f_B)$ to the hyperplane 
\[
\{c\}\times\p^{n}\subset\p^1\times\p^{n}
\]
 is a birational map. Then the restriction of $\psi_1(f_B)$ to  $\{c\}\times\p^{n}$ has to fulfill the relations with the group $\left<\PGL_{n+1}(\C),\sigma_{n}\right>$. By Corollary \ref{autgn} we obtain that this restriction has to be $f_B$.  Hence the image $\psi_1(f_B)$ is unique.
\end{proof}

\begin{proposition}
There exists no group homomorphism $\psi\colon \G_{n}\to\Bir(F_l)$ such that $\psi|_{\PGL_{n+1}(\C)}=\varphi_l\circ\alpha$.
\end{proposition}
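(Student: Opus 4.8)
The plan is to mirror the proof of Proposition~\ref{phil} and then derive a contradiction from the relations of $\G_n$, exactly as in the preceding negative statement for $\p(\T\p^2)$. First I would record the two structural facts that make the machinery of Proposition~\ref{phil} available: the automorphism $\alpha\colon g\mapsto {}^tg^{-1}$ satisfies $\alpha(D_n)=D_n$ (it inverts the diagonal torus) and $\alpha|_{\s_{n+1}}=\id_{\s_{n+1}}$ (the transpose-inverse of a permutation matrix is itself). Consequently, if an extension $\psi$ existed, conjugating by the birational map $\phi$ of Proposition~\ref{phil} would produce $\psi_1\coloneqq\phi\circ\psi\circ\phi^{-1}$ on $\p^n\times\p^1$ for which $\psi_1(D_n)$ acts canonically on the first factor and trivially on the second, $\psi_1(\W_n)$ permutes the $D_n$-orbits and hence yields a homomorphism $\rho\colon\W_n\to\PGL_2(\C)$, and the fibre $L=[1:\dots:1]\times\p^1$ is fixed by $\psi_1(\mathcal{A}_{n+1})$. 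All of these steps only use the action of $D_n$, $\s_{n+1}$ and $\mathcal{A}_{n+1}$, which $\alpha$ preserves, so they go through verbatim.

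Second, I would run the argument of Proposition~\ref{phil} up to the point where $\left<\PGL_{n+1}(\C),\sigma_n\right>$ is shown to preserve the vertical fibration, so that projection to the base $\p^n$ gives a homomorphism $b\colon\left<\PGL_{n+1}(\C),\sigma_n\right>\to\Cr_n$. By construction $b|_{\PGL_{n+1}(\C)}$ is the standard embedding precomposed with $\alpha$. Because $\alpha$ fixes $\s_{n+1}$ and normalises $D_n$, the same constraints that pin down $\psi(\sigma_n)$ in Proposition~\ref{phil} force $b(\sigma_n)$ to be the standard $\sigma_n$; in other words $b$ would have to be the unique standard extension of Corollary~\ref{autgn}, but with its $\PGL_{n+1}(\C)$-part twisted by the outer automorphism $\alpha$.

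The contradiction then comes from the relations between $\sigma_n$ and $\PGL_{n+1}(\C)$ inside $\left<\PGL_{n+1}(\C),\sigma_n\right>$, just as the relation $(\sigma g)^3=\id$ is used in the $\p(\T\p^2)$ case: one exhibits $g\in\PGL_{n+1}(\C)$ and a relation $w(\sigma_n,g)=\id$ for which $w(\sigma_n,\alpha(g))\neq\id$ in $\Cr_n$. Since $b(\sigma_n)=\sigma_n$ and $b(g)$ is the standard action of $\alpha(g)$, this contradicts $b$ being a homomorphism, so no $\psi$ exists; the argument is uniform in $l$ and covers the product case $l=0$ as well. The main obstacle is precisely this last step: the relations that are symmetric under $\alpha$ — such as $(hf_A)^3=\id$, whose base part closes up automatically because $b$ is a homomorphism — are useless, so one must single out a relation that is sensitive to the outer class of $\alpha$ and verify that the twisted word genuinely fails to be the identity. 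If one prefers to argue on the fibres rather than on the base, the same asymmetry appears, since $\alpha$ replaces the constant Jacobian $-1$ of the affine involution $h$ by the non-constant Jacobian $-(1+x_1)^{-(n+1)}$ of $\alpha(h)$, so that the vertical cocycle around the chosen relation cannot close; again the difficulty is to choose a relation for which the telescoping of the Jacobian factors along the orbit does not accidentally cancel the twist.
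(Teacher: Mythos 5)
Your strategy is the paper's own strategy: conjugate by the map $\phi$ of Proposition \ref{phil} to work on $\p^n\times\p^1$, use that $\alpha$ preserves $D_n$, $\s_{n+1}$ and $\mathcal{A}_{n+1}$ to rerun the fibration argument, conclude that $\left<\PGL_{n+1}(\C),\sigma_n\right>$ preserves the vertical fibration, and project to the base to obtain a homomorphism $b\colon\left<\PGL_{n+1}(\C),\sigma_n\right>\to\Cr_n$ with $b|_{\PGL_{n+1}(\C)}=\varphi\circ\alpha$. The one place your proof is incomplete is exactly the step you flag as the ``main obstacle'': you never exhibit the $\alpha$-sensitive relation, so as written the contradiction is asserted rather than proved. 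This is, however, not an open difficulty: it is the content of Corollary \ref{autgn} — more precisely of its proof, which only uses the subgroup $\left<\PGL_{n+1}(\C),\sigma_n\right>$ (the corollary itself is stated for $\G_n$, so strictly speaking one must invoke the proof, as the paper does elsewhere with the phrase ``by Corollary \ref{autgn} and its proof''). There, after forcing $b(\sigma_n)=\sigma_n$ through the relations with $D_n$ and $\s_{n+1}$ — exactly the pinning-down you describe — one takes $g_n=[x_n-x_0:x_n-x_1:\dots:x_n-x_{n-1}:x_n]$ and uses that $(\sigma_n g_n)^3=\id$ while $(\sigma_n\alpha(g_n))^3\neq\id$; this is the $n$-dimensional analogue of the relation $(\sigma h)^3=\id$ of Lemma \ref{relations} that you correctly point to as the model. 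The paper's proof of the proposition is precisely your first two paragraphs followed by this citation, so your argument becomes complete and identical in substance to the paper's once the flagged obstacle is replaced by that reference (your alternative Jacobian-cocycle remark is then unnecessary).
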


\begin{proof}
Assume that such an extension $\psi\colon\G_{n}\to\Cr_n$ exists. 
Let $\phi$ be as in Proposition \ref{phil} and 
$\psi_2\colon\G_{n}\to\Bir(\p^1\times\p^{n}),$
\[
\psi_2\coloneqq\phi\circ\varphi_l\circ\alpha\circ\phi^{-1}.
\] 
Similarly as in the proof of Proposition \ref{phil} one can show that $\psi_2(\sigma_{n})$ preserves the vertical fibration. In that way we obtain an algebraic homomorphism 
\[
A\colon \left<\PGL_{n+1},\sigma\right>\to\Cr_{n}
\]
such that $A|_{\PGL_{n+1}(\C)}=\alpha$. Such a homomorphism does not exist by Corollary \ref{autgn}.
\end{proof}

\subsection{The case $C\times\p^n$}

\begin{proposition}\label{phic}
The embedding $\varphi_C\colon \PGL_{n+1}(\C)\to\Bir(C\times\p^{n})$ extends uniquely to the standard embedding   
\[
\Phi_C\colon\G_{n}\to\Bir(C\times\p^{n}) \text{ (see Example $\ref{phic}$). }
\]
\end{proposition}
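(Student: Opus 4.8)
My plan is to reduce the statement to Proposition \ref{phil} by first showing that the image $\psi(\G_n)$ of an arbitrary extension $\psi\colon\G_n\to\Bir(C\times\p^n)$ of $\varphi_C$ respects the projection $\pi\colon C\times\p^n\to C$ and acts trivially on the base $C$; after that the problem becomes fibrewise and can be handled exactly as in the case of $F_l$.

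First I would set up the base action. Since $\p^n$ is rational while $C$ has genus $\geq 1$, the projection $\pi$ is, up to birational equivalence, the maximal rationally connected fibration of $C\times\p^n$, and because $C$ is not uniruled this fibration is a birational invariant. Hence every $f\in\Bir(C\times\p^n)$ permutes the fibres of $\pi$ and descends to an element $\bar f\in\Bir(C)=\aut(C)$, which yields a homomorphism $\beta\colon\Bir(C\times\p^n)\to\aut(C)$ whose kernel consists of the birational transformations over $C$, i.e. of the Cremona group of $\p^n$ over the field $\C(C)$. The standard embedding $\Phi_C$ takes values in $\ker\beta$ and acts fibrewise by the standard action of $\G_n\subset\Cr_n$; so it is enough to prove that $\psi(\G_n)\subset\ker\beta$ and that $\psi$ induces the standard action on a general fibre, since an element of $\ker\beta$ is determined by its restriction to a dense set of fibres.

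The heart of the matter, and what I expect to be the main obstacle, is to show that $\beta\circ\psi$ is trivial on $\W_n$. Because $\varphi_C$ acts only on the $\p^n$-factor, $\beta\circ\psi$ already kills $\PGL_{n+1}(\C)$. The torus $\psi(D_n)=\varphi_C(D_n)$ has an open orbit in each fibre $\{c\}\times\p^n$, so the closures of its orbits are precisely the fibres of $\pi$; since $\W_n$ normalizes $D_n$, the group $\psi(\W_n)$ permutes them and we obtain a homomorphism $\rho:=\beta\circ\psi|_{\W_n}\colon\W_n\to\aut(C)$. To see that $\rho$ is trivial I would run through the generators of $\W_n$ provided by Lemma \ref{gengn}. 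For $f_A=(1/x_1,x_2,\dots,x_n)$ one has $f_A^2=\id$, hence $\rho(f_A)^2=\id$; moreover $(hf_A)^3=\id$ for $h=(1-x_1,x_2,\dots,x_n)\in\PGL_{n+1}(\C)$, and since $\rho(h)=\id$ this forces $\rho(f_A)^3=\id$, whence $\rho(f_A)=\id$. The remaining generators $f_B,f_C,f_D,f_E$ are treated through the same relations used in the proof of Proposition \ref{phil} (for instance $f_B=f_Df_Cf_Ef_D^{-1}$). The decisive simplification compared with the case $F_l$ is that here all the auxiliary elements entering these relations lie in $\PGL_{n+1}(\C)$ and hence act trivially on $C$; consequently the twists that produced the factor $(-1)^l$ in Proposition \ref{phil} cannot appear, and each generator is forced into $\ker\rho$. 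This gives $\rho\equiv\id$, so $\psi(\G_n)\subset\ker\beta$.

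Finally I would conclude fibrewise. By Lemma \ref{exci} a general fibre $\{c\}\times\p^n$ is not contracted by any $\psi(g)$, so restriction to such a fibre defines a homomorphism $r_c\colon\G_n\to\Bir(\{c\}\times\p^n)=\Cr_n$ whose restriction to $\PGL_{n+1}(\C)$ is the standard embedding. By Corollary \ref{autgn} the map $r_c$ must be the standard embedding of $\G_n$ into $\Cr_n$; since this holds for a general $c$ and the outcome is independent of $c$, each $\psi(g)$ acts on the fibres by the standard action of $g$ and trivially on $C$. Hence $\psi=\Phi_C$, which proves both existence and uniqueness.
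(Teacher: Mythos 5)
Your proof is correct and takes essentially the same route as the paper: show that $\psi(\G_n)$ preserves the projection to $C$ and acts trivially on the base, then restrict to the fibres and apply Corollary \ref{autgn} to conclude that the fibrewise action is standard. The two variations you introduce are cosmetic rather than structural: the MRC/non-uniruledness argument for preservation of the fibration (the paper gets by with the torus-orbit argument, which you also include), and the generator-by-generator killing of the base action via the relations $(hf_A)^3=\id$ and $f_B=f_Df_Cf_Ef_D^{-1}$, which is exactly the content of the paper's appeal to its Appendix, namely Corollary \ref{hninkernel}, stating that the normal subgroup of $\G_n$ generated by $\PGL_{n+1}(\C)$ is all of $\G_n$.
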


\begin{proof}
Suppose that there is an extension 
$\Psi\colon\G_{n}\to\Bir(C\times\p^{n})$ of $\varphi_C$. 
By definition, $\Psi(\PGL_{n+1}(\C))$ fixes the horizontal fibration with fibers isomorphic to $\p^n$. Moreover, each of the horizontal fibers is a closure of a $\Psi(D_{n})$-orbit. Since the elements of $\W_{n}$ commute with $D_{n}$, we conclude that $\Psi(\W_n)$ preserves the horizontal fibration. Hence $\G_{n}$ preserves the horizontal fibration and we obtain a homomorphism
 \[
 \rho\colon\G_{n}\to\Bir(C)
 \]
  such that $\PGL_{n+1}(\C)\subset\ker(\rho)$. In the Appendix it is shown that the normal subgroup generated by $\PGL_{n+1}$ in $\G_n$ is all of $\G_n$. Hence $\rho$ is trivial and $\Psi(\G_{n})$ fixes the horizontal fibration. The restriction $\Psi(\G_{n})|_{c\times\p^{n}}$ for any $c\in C$ defines a homomorphism from $\G_{n}$ to $\Cr_{n}$ such that the restriction to $\PGL_{n+1}(\C)$ is the standard embedding. By Corollary \ref{autgn}, this is the standard embedding. Hence $\Psi$ is unique.
\end{proof}

\begin{proposition}
There exists no group homomorphism $\Psi\colon \G_{n}\to\Bir(\C\times\p^{n})$ such that $\Psi|_{\PGL_{n+1}(\C)}=\varphi_C\circ\alpha$.
\end{proposition}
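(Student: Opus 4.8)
The plan is to run the argument of Proposition \ref{phic} essentially verbatim, the only change being that the fiberwise homomorphism it produces restricts to the \emph{dual} representation $\alpha$ rather than the standard one, so that the contradiction comes from the non-extendability of $\alpha$ instead of from uniqueness. Suppose for contradiction that a homomorphism $\Psi\colon\G_n\to\Bir(C\times\p^n)$ with $\Psi|_{\PGL_{n+1}(\C)}=\varphi_C\circ\alpha$ exists. First I would note that $\alpha(D_n)=D_n$, since the transpose-inverse of a diagonal matrix is again diagonal; hence $\varphi_C\circ\alpha$ restricted to $D_n$ is still a faithful torus action on the $\p^n$-factor acting trivially on $C$, and each horizontal fiber $\{c\}\times\p^n$ is again the closure of a $\Psi(D_n)$-orbit (the closure of the orbit of a point whose $\p^n$-coordinate lies in the open torus orbit). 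Thus $\Psi(D_n)$ preserves the horizontal fibration $C\times\p^n\to C$.

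Next, because $\norm_{\Cr_n}(D_n)=D_n\rtimes\W_n$, the subgroup $\W_n$ normalizes $D_n$, so $\Psi(\W_n)$ normalizes $\Psi(D_n)$ and therefore permutes the closures of its orbits; consequently $\Psi(\W_n)$, and hence all of $\Psi(\G_n)=\langle\Psi(\PGL_{n+1}(\C)),\Psi(\W_n)\rangle$, preserves the horizontal fibration. This gives a homomorphism $\rho\colon\G_n\to\Bir(C)$ recording the induced action on the base. Since $\varphi_C\circ\alpha$ acts trivially on $C$, we have $\PGL_{n+1}(\C)\subset\ker(\rho)$, and by the result from the Appendix already used in the proof of Proposition \ref{phic} — that the normal subgroup of $\G_n$ generated by $\PGL_{n+1}(\C)$ is all of $\G_n$ — the homomorphism $\rho$ is trivial. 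Hence $\Psi(\G_n)$ fixes every horizontal fiber.

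Restricting to a general fiber $\{c\}\times\p^n\simeq\p^n$ then yields a homomorphism $\G_n\to\Cr_n$ whose restriction to $\PGL_{n+1}(\C)$ is $\alpha$, and restricting this further to $\langle\PGL_{n+1}(\C),\sigma_n\rangle\subset\G_n$ produces an extension of $\alpha$ to that subgroup. This contradicts Corollary \ref{autgn}, exactly as in the $F_l$-case, completing the proof. I expect the only delicate point to be the passage from the abstract homomorphism $\Psi$ to a genuine fiberwise birational action: one must verify, using Lemma \ref{exci} applied to the $\Psi(\PGL_{n+1}(\C))$-equivariant projection to $C$, that no horizontal fiber is swallowed by the indeterminacy or exceptional locus of any $\Psi(g)$, so that restriction to a general fiber is well defined and does realize $\alpha$ on $\PGL_{n+1}(\C)$. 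Everything else is parallel to Proposition \ref{phic} and to the analogous dual-nonextension statements proved above for $F_l$ and $\p(T\p^2)$.
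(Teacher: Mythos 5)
Your proposal is correct and follows essentially the same route as the paper: establish (as in Proposition \ref{phic}) that $\Psi(\G_n)$ preserves and then fixes the horizontal fibration, using that $\W_n$ normalizes $D_n$ and that the normal subgroup of $\G_n$ generated by $\PGL_{n+1}(\C)$ is all of $\G_n$, and then restrict to a fiber to obtain a homomorphism $\G_n\to\Cr_n$ extending $\alpha$, contradicting Corollary \ref{autgn}. Your added care about fibers not being swallowed by exceptional loci (and your correct use of ``normalizes'' where the paper loosely says ``commute'') only makes explicit points the paper leaves implicit; the contradiction can be drawn directly at the level of $\G_n$, so the intermediate restriction to $\langle\PGL_{n+1}(\C),\sigma_n\rangle$ is unnecessary but harmless.
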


\begin{proof}
Assume there exists such a $\Psi$. As in the proof of Proposition \ref{phic} one can show that $\Psi(\G_{n})$ fixes the horizontal fibration. The restriction $\Psi(\G_{n})|_{c\times\p^{n-1}}$ defines for each $c\in C$ a homomorphism from $\G_{n}$ to $\Cr_{n}$ such that the restriction to $\PGL_{n+1}(\C)$ is given by $g\mapsto \alpha(g)$.  By Corollary \ref{autgn}, there exists no such homomorphism. 
\end{proof}


\appendix
\section*{Appendix}

\subsection*{Relations and structures in $\Cr_n$}\label{sectionrelation}\label{appendix}
\renewcommand{\thesection}{A} 
We will often use the following relations between elements of the Cremona group:
\begin{lemma}\label{relations}
In $\Cr_2$ the following relations hold:
\begin{enumerate}
 \item$\sigma\tau(\tau\sigma)^{-1}=\id$ for all $\tau\in\s_3$, 
 \item$\sigma d=d^{-1}\sigma$ for all diagonal maps $d\in D_2$ and 
\item\label{rel}$(\sigma h)^3=\id$ for $h=[x_2-x_0:x_2-x_1:x_2]$.
\end{enumerate}
\end{lemma}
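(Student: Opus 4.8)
The plan is to verify all three relations by direct computation with the explicit homogeneous formulas, since every element involved is given concretely. Throughout I write $\sigma=[x_1x_2:x_0x_2:x_0x_1]$ and compose maps as functions, so that a word such as $\sigma h$ means first apply $h$ and then $\sigma$; this is the convention under which relations (2) and (3) below are stated.

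For relation (1), I would observe that the $i$-th coordinate of $\sigma$ is exactly the product $\prod_{j\neq i}x_j$ of the two remaining variables. Hence for a permutation $\tau\in\s_3$ of the homogeneous coordinates, conjugating $\sigma$ by $\tau$ merely relabels both the inputs and the outputs in the same way, so $\tau\sigma\tau^{-1}=\sigma$; this is equivalent to $\sigma\tau=\tau\sigma$, which is precisely $\sigma\tau(\tau\sigma)^{-1}=\id$. (One may also just check the identity on the three transpositions, which generate $\s_3$.) For relation (2), I would take $d=[ax_0:bx_1:cx_2]$ with $a,b,c\in\C^*$; a short computation gives $\sigma d=[bc\,x_1x_2:ac\,x_0x_2:ab\,x_0x_1]$ and $d^{-1}\sigma=[a^{-1}x_1x_2:b^{-1}x_0x_2:c^{-1}x_0x_1]$, and multiplying the second representative through by $abc$ shows that the two coincide in $\Cr_2$, so $\sigma d=d^{-1}\sigma$.

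The substantive part is relation (3). The plan is to compute the quadratic map $g:=\sigma h$ explicitly and then iterate twice, extracting the common polynomial factor at each stage so as to drop back to the correct degree. With $h=[x_2-x_0:x_2-x_1:x_2]$ one finds $g=[(x_2-x_1)x_2:(x_2-x_0)x_2:(x_2-x_0)(x_2-x_1)]$. Composing $g$ with itself and cancelling the common factor $(x_2-x_0)(x_2-x_1)$ should collapse $g^2$ to the form $[x_1(x_2-x_0):x_0(x_2-x_1):-x_0x_1]$; composing once more with $g$ and cancelling the common factor $x_0x_1x_2$ should then yield $g^3=[x_0:x_1:x_2]=\id$. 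I expect no conceptual difficulty here: the only obstacle is bookkeeping, namely keeping track of the signs and of the common factors that must be removed at each iteration in order to see the degree fall and recognise the identity.
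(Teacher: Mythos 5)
Your proposal is correct, and it follows exactly the route the paper takes: the paper's entire proof is ``One checks the identities by direct calculation,'' and your computations (commutation of $\sigma$ with coordinate permutations, the scalar bookkeeping for $\sigma d = d^{-1}\sigma$, and the explicit iteration of $g=\sigma h$ with cancellation of the factors $(x_2-x_0)(x_2-x_1)$ and then $x_0x_1x_2$, landing on $g^3=\id$) are precisely that calculation carried out, and they check out. Note only that your $g^2$ agrees with the one obtained by composition up to the overall projective scalar $-1$, which is immaterial in $\Cr_2$.
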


\begin{proof}
One checks the identities by direct calculation.
\end{proof}

Denote by $\Cr_n^0\subset\Cr_n$ the subgroup consisting of elements that contract only rational hypersurfaces. We have $\G_n\subset\Cr_n^0$. On the other hand, it seems to be an interesting open question, whether there exist elements in $\Cr_n^0$ that are not contained in $\G_n$ for any $n\geq 3$ (cf. \cite{MR3229349}).

\begin{lemma}\label{gengn}
The group $\G_n$ is generated by $\PGL_{n+1}(\C)$ and the birational transformations
$\sigma_n:=(x_1^{-1}, x_2^{-1},\dots, x_n^{-1}) \text{ and } f_B:=(x_1x_2, x_3,\dots, x_n).$
\end{lemma}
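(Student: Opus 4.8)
The plan is to establish the two inclusions separately, with the reverse inclusion carrying all the content. For the easy direction, observe that both $\sigma_n = f_{-I}$ and $f_B$ are monomial transformations, hence elements of $\W_n$; consequently $\langle \PGL_{n+1}(\C), \sigma_n, f_B \rangle \subseteq \langle \PGL_{n+1}(\C), \W_n \rangle = \G_n$.

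For the reverse inclusion, since $\G_n = \langle \PGL_{n+1}(\C), \W_n \rangle$ by definition, it suffices to show that every monomial transformation lies in the right-hand group, i.e. that $\W_n \subseteq \langle \PGL_{n+1}(\C), \sigma_n, f_B \rangle$. Under the isomorphism $\W_n \cong \GL_n(\Z)$, $f_A \mapsto A$, the generator $\sigma_n$ corresponds to $-I$ and $f_B$ to the elementary transvection $B = I + e_{12}$ (the identity matrix with one extra $1$ in position $(1,2)$). The geometric input that makes the argument run is that $\PGL_{n+1}(\C)$ already contains monomial maps: the group $\s_n$ permuting the affine coordinates $x_1,\dots,x_n$ sits inside $\PGL_{n+1}(\C)$ and corresponds exactly to the permutation matrices of $\GL_n(\Z)$.

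First I would conjugate $f_B$ by these coordinate permutations. Since conjugating $I + e_{12}$ by a permutation matrix $P_\pi$ produces $I + e_{\pi(1)\pi(2)}$, letting $\pi$ range over $\s_n$ yields every elementary transvection $I + e_{ij}$ with $i \neq j$. Invoking the classical fact that these transvections generate $\SL_n(\Z)$ for $n \geq 2$, I obtain $\SL_n(\Z) \subseteq \langle \PGL_{n+1}(\C), f_B \rangle$. To capture the remaining coset of $\SL_n(\Z)$ in $\GL_n(\Z)$ it then suffices to exhibit a single element of determinant $-1$: a transposition in $\s_n \subset \PGL_{n+1}(\C)$ already provides one (and $\sigma_n = -I$ gives another when $n$ is odd). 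Hence $\langle \SL_n(\Z), \text{this transposition}\rangle = \GL_n(\Z) = \W_n$, which yields $\W_n \subseteq \langle \PGL_{n+1}(\C), \sigma_n, f_B \rangle$ and therefore equality.

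The step I expect to be the crux is the bookkeeping at the group-theoretic level: identifying the coordinate permutations of $\PGL_{n+1}(\C)$ with the permutation matrices of $\W_n$, and confirming that $f_B$ really corresponds to an elementary transvection, so that its conjugates form a standard generating set of $\SL_n(\Z)$. Once this dictionary is fixed, the generation of $\SL_n(\Z)$ by transvections and the reduction of the determinant to a permutation are routine. I would also note that $\sigma_n$ is not strictly necessary to generate $\W_n$, since the transposition already supplies an element of determinant $-1$; it is retained here mainly to keep the Blanc--Hed\'en generators $\PGL_{n+1}(\C)$ and $\sigma_n$ explicit and to match the parity considerations appearing in the later propositions.
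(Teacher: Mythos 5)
Your proof is correct, and it takes a genuinely different route from the paper's. The paper quotes (from de la Harpe) the generating set of $\GL_n(\Z)$ consisting of the permutation matrices together with $A=\operatorname{diag}(-1,1,\dots,1)$ and $B=I+e_{12}$, identifies $f_B$ with $B$, and then disposes of the generator $A$ by citing the theorem of Blanc and Hed\'en that $f_A=(x_1^{-1},x_2,\dots,x_n)$ lies in $\left<\PGL_{n+1}(\C),\sigma_n\right>$; that citation is a nontrivial geometric input ($f_A$ is a quadratic transformation, not a linear one), and it is the only place where $\sigma_n$ enters. You instead stay entirely inside $\W_n\simeq\GL_n(\Z)$: the $\s_n$-conjugates of $f_B$ give all elementary transvections $I+e_{ij}$, these generate $\SL_n(\Z)$, and a transposition (determinant $-1$, already sitting in $\PGL_{n+1}(\C)$) fills out $\GL_n(\Z)$. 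Your only external ingredient is the classical fact that the transvections generate $\SL_n(\Z)$, so your argument is more elementary and self-contained, and it even yields a slightly stronger conclusion, which you correctly point out: $\sigma_n$ is redundant, i.e. $\G_n=\left<\PGL_{n+1}(\C),f_B\right>$. What the paper's route buys in exchange is economy and coherence with the rest of the appendix: the element $f_A$ and the Blanc--Hed\'en decompositions are reused in the proof of Corollary \ref{hninkernel} and in Section \ref{extension}, so channelling this lemma through $f_A$ and $\sigma_n$ matches how it is applied later, whereas your route would leave those inputs to be introduced separately where they are needed.
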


\begin{proof}
It is known that $\GL_n(\Z)$ is generated by the subgroup of permutation matrices in $\GL_n(\Z)$ and the two elements
\[
A:=\left( \begin{array}{ccccc}
-1 & 0&0&\cdots&0 \\
0 & 1&0&\cdots&0 \\
&&\cdots&&\\
\\
0&0&\cdots&1&0\\
0&0&\cdots&0&1
 \end{array}\right) \text{ and }
B:= \left( \begin{array}{ccccc}
1 & 1&0&\cdots&0 \\
0 & 1&0&\cdots&0 \\
&&\cdots&&\\
\\
0&0&\cdots&1&0\\
0&0&\cdots&0&1
 \end{array}\right)
\]
(see for example \cite[III.A.2]{MR1786869}). Notice that $f_B$ is the birational transformation in $\W_n$ corresponding to $B$. 
Let $f_A$ be the birational transformation corresponding to $A$. In \cite{Blanc:2014aa} it is shown that $f_A$ is contained in $\left<\PGL_{n+1}(\C),\sigma_n\right>$.
\end{proof}

The goal of this appendix is to prove the following two corollaries of Theorem~\ref{thmserge}:
 
\begin{corollary}\label{hninkernel}
Let $n>m$ and let $\Phi\colon\Cr_n\to\Cr_m$ be a group homomorphism. Then the normal subgroup of $\Cr_n$ containing $\G_n$ is contained in the kernel of $\Phi$.
\end{corollary}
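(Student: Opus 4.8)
The plan is to separate the statement into an analytic input and a group-theoretic core. First I would reduce everything to two claims: that $\Phi$ kills $\PGL_{n+1}(\C)$, and that $\G_n$ equals the normal closure of $\PGL_{n+1}(\C)$ taken inside $\G_n$. Granting both, $\ker(\Phi)$ is a normal subgroup of $\Cr_n$ containing $\PGL_{n+1}(\C)$; intersecting with $\G_n$ it contains the $\G_n$-normal closure of $\PGL_{n+1}(\C)$, which is $\G_n$, so $\G_n\subset\ker(\Phi)$. As $\ker(\Phi)$ is normal in $\Cr_n$, it then contains the whole normal closure of $\G_n$ in $\Cr_n$, which is the assertion.

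For the first claim I would apply Theorem \ref{thmserge} to $\Phi|_{\PGL_{n+1}(\C)}\colon\PGL_{n+1}(\C)\to\Bir(\p^m)$. Here the target is $\Bir(M)$ with $M=\p^m$ of dimension $m$, while the source is $\PGL_{r+1}(\C)$ with $r=n$. A non-trivial such homomorphism would force $m\geq n$, contradicting $n>m$; hence the restriction is trivial and $\PGL_{n+1}(\C)\subset\ker(\Phi)$.

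The heart of the matter, and where I expect the real difficulty, is the second claim. Write $N$ for the normal closure of $\PGL_{n+1}(\C)$ in $\G_n$ and work in $\G_n/N$, which by Lemma \ref{gengn} is generated by the images of $\sigma_n=(x_1^{-1},\dots,x_n^{-1})$ and of the transvection $f_B$. To kill $\sigma_n$, set $f_A=(x_1^{-1},x_2,\dots,x_n)$: the relation $(hf_A)^3=\id$ with $h\in\PGL_{n+1}(\C)$ affine (used already in the proof of Proposition \ref{phil}) gives that the image of $f_A$ has order dividing $3$ modulo $N$, while $f_A^2=\id$ gives order dividing $2$, so this image is trivial. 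Conjugating $f_A$ by the transpositions $\tau_i\in\s_n\subset\PGL_{n+1}(\C)$ exchanging $x_1$ and $x_i$ produces the coordinate inversions, whose commuting product is $\sigma_n$; since the $\tau_i$ are trivial modulo $N$, the image of $\sigma_n$ is the $n$-th power of that of $f_A$, hence trivial. Thus $\G_n/N$ is cyclic, generated by the image of $f_B$, and in particular abelian. Finally, for $n\geq 3$ the transvection $f_B$ is a commutator of two transvections in $\W_n\cong\GL_n(\Z)$ (coming from the matrix identity $[E_{13},E_{32}]=E_{12}$), so its image lies in the commutator subgroup of the abelian group $\G_n/N$ and is therefore trivial. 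Hence $\G_n/N=\{1\}$, i.e. $N=\G_n$. The case $n=2$ is handled directly: there $\G_2=\Cr_2$ and $\sigma_2=\sigma$ by Noether--Castelnuovo, and the image of $\sigma$ is killed by relation (3) of Lemma \ref{relations} together with $\sigma^2=\id$, so no commutator step is needed. The main obstacle is thus organising these relations so as to show that both generators $\sigma_n$ and $f_B$ die in $\G_n/N$.
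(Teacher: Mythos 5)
Your proof is correct, and its outer skeleton is the same as the paper's: Theorem \ref{thmserge} forces $\PGL_{n+1}(\C)\subset\ker(\Phi)$ since $n>m$, so everything reduces to showing that $\G_n$ coincides with the normal closure $N$ of $\PGL_{n+1}(\C)$, after which normality of $\ker(\Phi)$ finishes the argument. The two proofs diverge in how the monomial generators are killed modulo $N$. For $\sigma_n$, the paper uses the Noether-type relation $(\sigma_n g_n)^3=\id$ with $g_n=[x_n-x_0:\dots:x_n-x_{n-1}:x_n]\in\PGL_{n+1}(\C)$: as $\sigma_n$ and $g_n$ are involutions, this relation says precisely that $\sigma_n g_n$ conjugates $\sigma_n$ to $g_n$, so $\sigma_n\in N$ in one step; you instead kill $f_A=(x_1^{-1},x_2,\dots,x_n)$ by playing the order-three relation $(hf_A)^3=\id$ against $f_A^2=\id$ in the quotient, and then rebuild $\sigma_n$ as a product of conjugates of $f_A$ by coordinate transpositions --- the same underlying idea (an order-three relation between a linear involution and a monomial involution), with slightly different mechanics. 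The genuinely different step is $f_B$: the paper writes $f_B=f_Df_Cf_Ef_D^{-1}$ and invokes the nontrivial result of Blanc--Hed\'en that $f_C\in\left<\PGL_{n+1}(\C),\sigma_n\right>$, whereas you observe that $\G_n/N$ is cyclic, hence abelian, and that $f_B$ is a commutator of transvections in $\GL_n(\Z)$ via $[I+E_{13},I+E_{32}]=I+E_{12}$ for $n\geq 3$, with $n=2$ disposed of by Noether--Castelnuovo. Your route is more elementary and self-contained; indeed, since you kill $f_A$ directly, you could even replace the appeal to Lemma \ref{gengn} (whose own proof cites Blanc--Hed\'en) by the standard fact that $\GL_n(\Z)$ is generated by the permutation matrices together with $A$ and $B$, removing that dependence altogether. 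What the paper's argument buys in exchange is uniformity and brevity: it treats all $n\geq 2$ at once, with no case distinction and no commutator bookkeeping.
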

No such non-trivial homomorphism is known so far. In fact, it is an open question, whether $\Cr_n$ is simple for $n\geq 3$.

\begin{corollary}\label{autgn}
Let $\Psi\colon\G_n\to\Cr_ng$ be a non-trivial group homomorphism. Then there exists an automorphism of fields $\gamma$ of $\C$ and an element $g\in\Cr_n$ such that $\gamma(g\Psi g^{-1})$ is the standard embedding.

Moreover, the extension of the standard embedding $\varphi\colon\PGL_{n+1}\to\Cr_n$ to the group $\G_n$ is unique. The embedding $\varphi\circ\alpha$, where $\alpha\colon\PGL_{n+1}\to\PGL_{n+1}$ is the algebraic automorphism $g\mapsto {}^tg^{-1}$, does not extend to a homomorphism from $\G_n$ to $\Cr_n$.
\end{corollary}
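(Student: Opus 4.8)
The plan is to first determine $\Psi$ on $\PGL_{n+1}(\C)$ by invoking Cantat's theorem, and then to transport that information to all of $\G_n$ through the generators supplied by Lemma~\ref{gengn}. Since $\PGL_{n+1}(\C)$ is simple, $\Psi|_{\PGL_{n+1}(\C)}$ is either trivial or injective; were it trivial, the normal subgroup of $\G_n$ generated by $\PGL_{n+1}(\C)$ would lie in $\ker(\Psi)$, and since this normal closure is all of $\G_n$ (established in the proof of Corollary~\ref{hninkernel}), $\Psi$ itself would be trivial. Thus $\Psi|_{\PGL_{n+1}(\C)}$ is a non-trivial homomorphism $\PGL_{n+1}(\C)\to\Bir(\p^n)=\Cr_n$, so Theorem~\ref{thmserge} applies with $M=\p^n$, $r=n$. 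After replacing $\Psi$ by $\gamma(g\Psi g^{-1})$ for a suitable field automorphism $\gamma$ and $g\in\Cr_n$, I may assume that $\Psi|_{\PGL_{n+1}(\C)}$ is the standard embedding $\varphi$ or its twist $\varphi\circ\alpha$; these are the only two possibilities, because the algebraic automorphisms of $\PGL_{n+1}(\C)$ are generated by the inner ones and $\alpha$, and the inner ones are absorbed into the conjugation by $g$.

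In both cases $\Psi(\sigma_n)$ is forced. The element $\sigma_n$ normalises $D_n$ and conjugates every $d\in D_n$ to $d^{-1}$; applying $\Psi$ and using that $\Psi$ maps $D_n$ isomorphically onto itself, I get that $\Psi(\sigma_n)$ normalises $D_n$ and acts on it by inversion. Hence $\Psi(\sigma_n)\in\norm_{\Cr_n}(D_n)=D_n\rtimes\W_n$ with image $-\id$ in $\W_n\cong\GL_n(\Z)$, i.e. $\Psi(\sigma_n)=d_0\sigma_n$ for some $d_0\in D_n$. Since $\sigma_n$ commutes with every coordinate permutation in $\s_{n+1}\subset\PGL_{n+1}(\C)$, on which $\alpha$ acts trivially so that $\Psi$ agrees with the inclusion, the product $d_0\sigma_n$ must commute with $\s_{n+1}$; this forces $d_0$ to be $\s_{n+1}$-invariant in $D_n$, hence trivial. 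Therefore $\Psi(\sigma_n)=\sigma_n$ in either case.

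To rule out $\Psi|_{\PGL_{n+1}(\C)}=\varphi\circ\alpha$ I would use a cubic Cremona relation $(\sigma_n h)^3=\id$ with $h\in\PGL_{n+1}(\C)$ — the higher-dimensional analogue of relation~(3) of Lemma~\ref{relations}, which can also be encoded by the order-three identity $(hf_A)^3=\id$ appearing in Proposition~\ref{phil}. Applying $\Psi$ and substituting $\Psi(\sigma_n)=\sigma_n$ and $\Psi(h)=\varphi(\alpha(h))$ turns the relation into $(\sigma_n\varphi(\alpha(h)))^3=\id$. Because $\alpha$ inverts the torus while fixing permutations, $\varphi(\alpha(h))\neq\varphi(h)$, and a direct computation — exactly of the type used to rule out $\varphi_B\circ\alpha$ in Section~\ref{extension} — shows $(\sigma_n\varphi(\alpha(h)))^3\neq\id$, a contradiction. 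This proves the final assertion of the corollary and, in the situation above, leaves only the case $\Psi|_{\PGL_{n+1}(\C)}=\varphi$.

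It then remains to prove uniqueness, i.e. that an extension $\Psi$ of $\varphi$ to $\G_n$ must be the inclusion. By Lemma~\ref{gengn} it suffices to compute $\Psi$ on $\sigma_n$ and $f_B$, and $\Psi(\sigma_n)=\sigma_n$ is already known. Writing $\Psi(w)=\mu(w)\,w$ for $w\in\W_n$, the normaliser computation shows that $\mu\colon\W_n\to D_n$ is a $1$-cocycle for the natural $\GL_n(\Z)$-action on $D_n\cong(\C^*)^n$. It vanishes on the coordinate permutations $\s_n\subset\PGL_{n+1}(\C)$ and on the generator $f_A\in\langle\PGL_{n+1}(\C),\sigma_n\rangle$ (by \cite{Blanc:2014aa}); since a cocycle that vanishes on a generating set closed under inverses vanishes on the whole subgroup, $\mu$ is trivial on all signed permutations. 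Everything thus reduces to proving $\mu(f_B)=\id$, equivalently $\Psi(f_B)=f_B$, and this is the step I expect to be the main obstacle: the torus bookkeeping by itself gives only $\mu(f_B)^2=\id$ together with the vanishing of several of its coordinates (from commutation with suitable permutations and with $f_A$), and the residual diagonal factor has to be eliminated by substituting the relation $f_B=f_Df_Cf_Ef_D^{-1}$ from the proof of Corollary~\ref{hninkernel} into the cocycle identity. Once $\mu\equiv\id$, the homomorphism $\Psi$ is the inclusion $\G_n\hookrightarrow\Cr_n$, which is the standard embedding; together with the first three paragraphs this yields the full statement.
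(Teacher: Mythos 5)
Your first three steps track the paper's own proof closely and are sound: non-triviality of $\Psi|_{\PGL_{n+1}(\C)}$ via the normal closure argument, Theorem \ref{thmserge} to reduce (after conjugation and a field automorphism) to $\varphi$ or $\varphi\circ\alpha$, the normalizer computation $\norm_{\Cr_n}(D_n)=D_n\rtimes\W_n$ plus commutation with $\s_{n+1}$ to force $\Psi(\sigma_n)=\sigma_n$, and an order-three relation between $\sigma_n$ and a linear map to exclude the twist by $\alpha$ (the paper uses $g_n=[x_n-x_0:\dots:x_n-x_{n-1}:x_n]$ and checks $(\sigma_n\alpha(g_n))^3\neq\id$). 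The cocycle reformulation $\Psi(w)=\mu(w)w$ is a faithful repackaging of the paper's bookkeeping, and the reduction via Lemma \ref{gengn} to proving $\mu(f_B)=\id$ is correct.

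The gap is exactly at the step you flag as the main obstacle, and your proposed fix does not work. Substituting $f_B=f_Df_Cf_Ef_D^{-1}$ into the cocycle identity gives $\mu(f_B)=\mu(f_D)\cdot\bigl(B\cdot\mu(f_D)^{-1}\bigr)$, using $\mu(f_E)=\id$ ($f_E$ is a coordinate permutation) and $\mu(f_C)=\id$ (since $f_C\in\left<\PGL_{n+1}(\C),\sigma_n\right>$ by \cite{Blanc:2014aa}, where $\Psi$ is already known to be the inclusion). But $\mu(f_D)$ is an unknown: $f_D$ does \emph{not} lie in $\left<\PGL_{n+1}(\C),\sigma_n\right>$ when $n$ is odd, for otherwise $f_B=f_Df_Cf_Ef_D^{-1}$ would lie there too, contradicting Blanc--Hed\'en's theorem that this subgroup is proper in $\G_n$ for odd $n$ (for even $n$ there is nothing left to prove at this stage, since then $f_B$ itself lies in $\left<\PGL_{n+1}(\C),\sigma_n\right>$). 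Writing $\mu(f_D)=(e_1,\dots,e_n)$, your relation yields $\mu(f_B)=(e_2^{-1},1,\dots,1)$, and the only available constraint on $f_D$ (it has order two, giving $e_1=e_2^2$ and $e_i^2=1$ for $i\geq 3$) leaves $e_2$ completely free. Combined with your other torus constraints this lands you precisely in the unresolved dichotomy $\Psi(f_B)\in\{f_B,\ (-x_1,x_2,\dots,x_n)\circ f_B\}$: since the relation only expresses $\mu(f_B)$ as a coboundary of an undetermined quantity, it is compatible with either alternative and cannot decide the sign. The paper needs a genuinely new input here, namely the relation $(r_2tf_Bt^{-1}r_3)\,r_1\,(r_2tf_Bt^{-1}r_3)=r_1$, whose essential feature is the unipotent, non-monomial element $r_1=[x_0:x_1:\dots:x_{n-1}:x_n+x_1]\in\PGL_{n+1}(\C)$, which does not normalize $D_n$; a direct computation then shows the twisted candidate $(-x_1,x_2,\dots,x_n)\circ f_B$ violates it. Some relation coupling $f_B$ to an element outside $D_n\rtimes\W_n$ is indispensable, and your argument is missing it.
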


By the theorem of Noether and Castelnuovo, Corollary \ref{autgn} implies in particular the theorem of D\'eserti about automorphisms of $\Cr_2$.

\begin{proof}[Proof of Corollary $\ref{hninkernel}$]
By Lemma \ref{gengn} it is enough to show that $\sigma_n$ and $f_B$ are contained in the normal subgroup containing $\PGL_{n+1}(\C)$. Let
\[
g_n:=[x_n-x_0:x_n-x_1:\dots:x_n-x_{n-1}:x_n]\in\PGL_{n+1}(\C).
\]
Then 
$\sigma_ng_n\sigma_ng_n\sigma_ng_n=\id.$
In particular, $\sigma_ng_n$ conjugates $\sigma_n$ to $g_n$. 

Let 
\[
C:=\left( \begin{array}{ccccc}
-1 & 2&0&\cdots&0 \\
0 & 1&0&\cdots&0 \\
&&\cdots&&\\
\\
0&0&\cdots&1&0\\
0&0&\cdots&0&1
 \end{array}\right), 
D:=\left( \begin{array}{ccccc}
-1 & 0&0&\cdots&0 \\
-1& 1&0&\cdots&0 \\
&&\cdots&&\\
\\
0&0&\cdots&1&0\\
0&0&\cdots&0&1
 \end{array}\right), 
 \]
 \[E:=\left( \begin{array}{ccccc}
0& 1&0&\cdots&0 \\
1& &0&\cdots&0 \\
&&\cdots&&\\
\\
0&0&\cdots&1&0\\
0&0&\cdots&0&1
 \end{array}\right)\in\GL_n(\Z)
\]
and let $f_C, f_D$ and $f_E$ be the corresponding elements in $\W_n$. It is shown in \cite{Blanc:2014aa} that $f_C$ is contained in $\left<\PGL_{n+1}(\C),\sigma_n\right>$. Moreover, one calculates that
\[
f_B=f_Df_Cf_Ef_D^{-1},
\]
which implies that $f_B$ is conjugate to an element in $\left<\PGL_{n+1}(\C),\sigma_n\right>$.
\end{proof}

\begin{proof}[Proof of Corollary $\ref{autgn}$]
By Theorem \ref{thmserge} we may assume that, up to conjugation and automorphism of fields, the restriction of $\Psi$ to $\PGL_{n+1}(\C)$ ist the standard embedding or the standard embedding composed with the automorphism $\alpha$ of $\PGL_{n+1}(\C)$ given by $\alpha(g)={}^tg^{-1}$. 

In particular, $\Psi(D_n)=D_n$ and therefore $\Psi(\W_n)$ is contained in $D_n\rtimes\W_n$. Assume that $\Psi(\sigma_n)=d\tau$ for some $d\in D_n$ and $\tau\in\W_n$. The relation 
$\Psi(\sigma_n)e\Psi(\sigma_n)=e^{-1}$
 for all $e\in D_n$ implies $\tau=\sigma_n$. Note that the restriction of $\Psi$ to $\s_{n+1}$ is the standard embedding. So for all  $\tau\in\s_{n+1}$ we obtain
\[
\tau d\sigma_n=d\sigma_n \tau=d\tau\sigma_n.
\]
The only element in $D_n$ that commutes with $\s_{n+1}$ is the identity. Hence $\Psi(\sigma_n)=\sigma_n$.

Let $g_n$ be as in the proof of Corollary  \ref{hninkernel}. The relation 
$\sigma_ng_n\sigma_ng_n\sigma_ng_n=\id$
 implies that $\Psi|_{\PGL_{n+1}(\C)}$ is the standard embedding, since 
\[
\sigma_n\alpha(g_n)\sigma_n\alpha(g_n)\sigma_n\alpha(g_n)\neq\id.
\]

It remains to show that $\Psi(f_B)=f_B$. Let $d\in D_n$, and $\rho\in \W_n$ such that $\Psi(f_B)=d\rho$. The image $\Psi(f_B)$ acts on $\Psi(D_n)$ by conjugation. We have that $\Psi(D_n)=D_n$, so the action of $\Psi(f_B)$ on $\Psi(D_n)$ is determined by $\rho$. Since  $\Psi|_{D_n}$ is the standard embedding, we obtain $\rho=f_B$. Let 
$d=(d_1x_1,\dots, d_n x_n).$
The image $\Psi(f_B)$ commutes with $\sigma_n$. We obtain
\[
d^{-1}\sigma_nf_B=\sigma_ndf_B=df_B\sigma_n=d\sigma_n f_B
\]
and hence $d_i=\pm 1$ for all $i$. 

The image $\Psi(f_B)$ commutes with all elements of $\s_{n+1}$ that fix the coordinates $x_1$ and $x_2$. Similar as above, this yields that $d$ commutes with all elements of $\s_{n+1}$ that fix the coordinates $x_1$ and $x_2$ and we get $d_i=1$ for $i\neq 1$ and $i\neq 2$.

In \cite{Blanc:2014aa} it is shown that $f_B^2$ is contained in $\left<\PGL_{n+1}(\C),\sigma_n\right>$. By what we proved above, this gives 
\[
\Psi(f_B^2)=f_B^2=df_Bdf_B=dd'f_B^2,
\]
 where
$d'=(d_1d_2x_1,d_2x_2,\dots,d_n x_n).$
So $dd'=\id$, which yields $d_1^2d_2=1$ and therefore $d_2=1$. This means that we have either $\Psi(f_B)=f_B$ or $\Psi(f_B)=df_B$ with $d=(-x_1,x_2,\dots, x_n)$. 

Let 
\[
r_1:=[x_0:x_1:\dots:x_{n-1}:x_n+x_1], r_2:=[x_n:x_1:\dots:x_{n-1}:x_0], \]\[r_3:=[x_n:x_0:x_2:\dots:x_{n-1}:x_1], t:=[x_n:x_0:\dots:x_{n-1}].
\]
We have the relation
\[
(r_2tf_Bt^{-1}r_3)r_1(r_2tf_Bt^{-1}r_3)=r_1
\]
and therefore
\[
(r_2t\Psi(f_B)t^{-1}r_3)r_1(r_2t\Psi(f_B)t^{-1}r_3)=r_1.
\]
One calculates that if $\Psi(f_B)=(-x_1,x_2,\dots,x_n)f_B$ then this relation is not satisfied. Hence $\Psi(f_B)=f_B$.
\end{proof}


\bibliographystyle{amsalpha}
\bibliography{/Users/christian/Dropbox/Literatur/bibliography_cu}

\providecommand{\bysame}{\leavevmode\hbox to3em{\hrulefill}\thinspace}
\providecommand{\MR}{\relax\ifhmode\unskip\space\fi MR }
\providecommand{\MRhref}[2]{%
  \href{http://www.ams.org/mathscinet-getitem?mr=#1}{#2}
}
\providecommand{\href}[2]{#2}
\begin{thebibliography}{BCTSSD85}

\bibitem[AC02]{MR1874328}
Maria Alberich-Carrami{\~n}ana, \emph{Geometry of the plane {C}remona maps},
  Lecture Notes in Mathematics, vol. 1769, Springer-Verlag, Berlin, 2002.
  \MR{1874328 (2002m:14008)}

\bibitem[Akh95]{MR1334091}
Dmitri~N. Akhiezer, \emph{Lie group actions in complex analysis}, Aspects of
  Mathematics, E27, Friedr. Vieweg \& Sohn, Braunschweig, 1995. \MR{1334091
  (96g:32051)}

\bibitem[BB66]{MR0200279}
A.~Bia{\l}ynicki-Birula, \emph{Remarks on the action of an algebraic torus on
  {$k^{n}$}}, Bull. Acad. Polon. Sci. S{\'e}r. Sci. Math. Astronom. Phys.
  \textbf{14} (1966), 177--181. \MR{0200279 (34 \#178)}

\bibitem[BCTSSD85]{MR786350}
Arnaud Beauville, Jean-Louis Colliot-Th{{\'e}}l{{\`e}}ne, Jean-Jacques Sansuc,
  and Peter Swinnerton-Dyer, \emph{Vari{\'e}t{\'e}s stablement rationnelles non
  rationnelles}, Ann. of Math. (2) \textbf{121} (1985), no.~2, 283--318.
  \MR{786350 (86m:14009)}

\bibitem[BD15]{MR3410471}
J{{\'e}}r{{\'e}}my Blanc and Julie D{{\'e}}serti, \emph{Degree growth of
  birational maps of the plane}, Ann. Sc. Norm. Super. Pisa Cl. Sci. (5)
  \textbf{14} (2015), no.~2, 507--533. \MR{3410471}

\bibitem[BF13]{MR3092478}
J{{\'e}}r{{\'e}}my Blanc and Jean-Philippe Furter, \emph{Topologies and
  structures of the {C}remona groups}, Ann. of Math. (2) \textbf{178} (2013),
  no.~3, 1173--1198. \MR{3092478}

\bibitem[BH14]{Blanc:2014aa}
J{\'e}r{\'e}my Blanc and Isac Hed{\'e}n, \emph{The group of cremona
  transformations generated by linear maps and the standard involution}.

\bibitem[Bla06]{MR2215969}
J{{\'e}}r{{\'e}}my Blanc, \emph{Conjugacy classes of affine automorphisms of
  {$\Bbb K^n$} and linear automorphisms of {$\Bbb P^n$} in the {C}remona
  groups}, Manuscripta Math. \textbf{119} (2006), no.~2, 225--241. \MR{2215969
  (2006m:14015)}

\bibitem[Bla09]{MR2504924}
\bysame, \emph{Sous-groupes alg{\'e}briques du groupe de {C}remona}, Transform.
  Groups \textbf{14} (2009), no.~2, 249--285. \MR{2504924 (2010b:14021)}

\bibitem[Bla13]{MR3080816}
\bysame, \emph{Symplectic birational transformations of the plane}, Osaka J.
  Math. \textbf{50} (2013), no.~2, 573--590. \MR{3080816}

\bibitem[Bri89]{brion1989spherical}
Michel Brion, \emph{Spherical varieties an introduction}, Topological methods
  in algebraic transformation groups, Springer, 1989, pp.~11--26.

\bibitem[Bru97]{MR1474805}
Marco Brunella, \emph{Feuilletages holomorphes sur les surfaces complexes
  compactes}, Ann. Sci. {\'E}cole Norm. Sup. (4) \textbf{30} (1997), no.~5,
  569--594. \MR{1474805 (98i:32051)}

\bibitem[BT73]{MR0316587}
Armand Borel and Jacques Tits, \emph{Homomorphismes ``abstraits'' de groupes
  alg{\'e}briques simples}, Ann. of Math. (2) \textbf{97} (1973), 499--571.
  \MR{0316587 (47 \#5134)}

\bibitem[Can03]{MR2026896}
Serge Cantat, \emph{Endomorphismes des vari{\'e}t{\'e}s homog{\`e}nes},
  Enseign. Math. (2) \textbf{49} (2003), no.~3-4, 237--262. \MR{2026896}

\bibitem[Can14]{MR3230847}
\bysame, \emph{Morphisms between {C}remona groups, and characterization of
  rational varieties}, Compos. Math. \textbf{150} (2014), no.~7, 1107--1124.
  \MR{3230847}

\bibitem[CD16]{Cerveau:2016aa}
Dominique Cerveau and Julie D{\'e}serti, \emph{Birational maps preserving the
  contact structure on $\mathbb{P}^3_\mathbb{C}$}.

\bibitem[CK15]{Corti:2015aa}
Alessio Corti and Anne-Sophie Kaloghiros, \emph{The sarkisov program for mori
  fibred calabi-yau pairs}.

\bibitem[CZ12]{MR3014483}
Serge Cantat and Abdelghani Zeghib, \emph{Holomorphic actions, {K}ummer
  examples, and {Z}immer program}, Ann. Sci. {\'E}c. Norm. Sup{\'e}r. (4)
  \textbf{45} (2012), no.~3, 447--489. \MR{3014483}

\bibitem[Dem70]{MR0284446}
Michel Demazure, \emph{Sous-groupes alg{\'e}briques de rang maximum du groupe
  de {C}remona}, Ann. Sci. {\'E}cole Norm. Sup. (4) \textbf{3} (1970),
  507--588. \MR{0284446 (44 \#1672)}

\bibitem[D{\'e}s06a]{deserti:tel-00125492}
Julie D{\'e}serti, \emph{{On the Cremona group: some algebraic and dynamical
  properties}}, Theses, {Universit{\'e} Rennes 1}, November 2006.

\bibitem[D{\'e}s06b]{MR2278755}
\bysame, \emph{Sur les automorphismes du groupe de {C}remona}, Compos. Math.
  \textbf{142} (2006), no.~6, 1459--1478. \MR{2278755 (2007g:14008)}

\bibitem[D{\'e}s14]{Deserti:2014kq}
Julie D{\'e}serti, \emph{Some properties of the group of birational maps
  generated by the automorphisms of $\mathbb{P}^n_\mathbb{C}$ and the standard
  involution}.

\bibitem[Die71]{MR0310083}
Jean~A. Dieudonn{{\'e}}, \emph{La g{\'e}om{\'e}trie des groupes classiques},
  Springer-Verlag, Berlin-New York, 1971, Troisi{{\`e}}me {{\'e}}dition,
  Ergebnisse der Mathematik und ihrer Grenzgebiete, Band 5. \MR{0310083 (46
  \#9186)}

\bibitem[DL16]{MR3451389}
Jeffrey Diller and Jan-Li Lin, \emph{Rational surface maps with invariant
  meromorphic two-forms}, Math. Ann. \textbf{364} (2016), no.~1-2, 313--352.
  \MR{3451389}

\bibitem[dlH00]{MR1786869}
Pierre de~la Harpe, \emph{Topics in geometric group theory}, Chicago Lectures
  in Mathematics, University of Chicago Press, Chicago, IL, 2000. \MR{1786869
  (2001i:20081)}

\bibitem[Dol12]{MR2964027}
Igor~V. Dolgachev, \emph{Classical algebraic geometry}, Cambridge University
  Press, Cambridge, 2012, A modern view. \MR{2964027}

\bibitem[Enr93]{enriques1893sui}
Federigo Enriques, \emph{Sui gruppi continui di trasformazioni cremoniane nel
  piano}, Rend. Accad. Lincei, 1er sem (1893).

\bibitem[FH91]{MR1153249}
William Fulton and Joe Harris, \emph{Representation theory}, Graduate Texts in
  Mathematics, vol. 129, Springer-Verlag, New York, 1991, A first course,
  Readings in Mathematics. \MR{1153249 (93a:20069)}

\bibitem[Giz99]{MR1714823}
Marat Gizatullin, \emph{On some tensor representations of the {C}remona group
  of the projective plane}, New trends in algebraic geometry ({W}arwick, 1996),
  London Math. Soc. Lecture Note Ser., vol. 264, Cambridge Univ. Press,
  Cambridge, 1999, pp.~111--150. \MR{1714823 (2000i:14018)}

\bibitem[Giz08]{MR2433658}
\bysame, \emph{Klein's conjecture for contact automorphisms of the
  three-dimensional affine space}, Michigan Math. J. \textbf{56} (2008), no.~1,
  89--98. \MR{2433658}

\bibitem[Hud27]{hudson1927}
Hilda~P. Hudson, \emph{Cremona transformation in plane and space}, Cambridge
  University Press, Cambridge, 1927.

\bibitem[Hum75]{MR0396773}
James~E. Humphreys, \emph{Linear algebraic groups}, Springer-Verlag, New
  York-Heidelberg, 1975, Graduate Texts in Mathematics, No. 21. \MR{0396773 (53
  \#633)}

\bibitem[Lam02]{MR1955604}
St{{\'e}}phane Lamy, \emph{Une preuve g{\'e}om{\'e}trique du th{\'e}or{\`e}me
  de {J}ung}, Enseign. Math. (2) \textbf{48} (2002), no.~3-4, 291--315.
  \MR{1955604 (2003m:14099)}

\bibitem[Lam14]{MR3229349}
\bysame, \emph{On the genus of birational maps between threefolds},
  Automorphisms in birational and affine geometry, Springer Proc. Math. Stat.,
  vol.~79, Springer, Cham, 2014, pp.~141--147. \MR{3229349}

\bibitem[Pan99]{MR1686984}
Ivan Pan, \emph{Une remarque sur la g{\'e}n{\'e}ration du groupe de {C}remona},
  Bol. Soc. Brasil. Mat. (N.S.) \textbf{30} (1999), no.~1, 95--98. \MR{1686984
  (2000b:14015)}

\bibitem[Pop13]{MR3135700}
Vladimir~L. Popov, \emph{Tori in the {C}remona groups}, Izv. Ross. Akad. Nauk
  Ser. Mat. \textbf{77} (2013), no.~4, 103--134. \MR{3135700}

\bibitem[Pro07]{MR2265844}
Claudio Procesi, \emph{Lie groups}, Universitext, Springer, New York, 2007, An
  approach through invariants and representations. \MR{2265844 (2007j:22016)}

\bibitem[Ser79]{MR554237}
Jean-Pierre Serre, \emph{Local fields}, Graduate Texts in Mathematics, vol.~67,
  Springer-Verlag, New York-Berlin, 1979, Translated from the French by Marvin
  Jay Greenberg. \MR{554237 (82e:12016)}

\bibitem[Ser08]{serre2008groupe}
Jean-Pierre Serre, \emph{Le groupe de cremona et ses sous-groupes finis},
  S{\'e}minaire Bourbaki \textbf{1000} (2008), 2008--2009.

\bibitem[Sta13]{MR3228629}
Immanuel Stampfli, \emph{A note on automorphisms of the affine {C}remona
  group}, Math. Res. Lett. \textbf{20} (2013), no.~6, 1177--1181. \MR{3228629}

\bibitem[Sum74]{MR0337963}
Hideyasu Sumihiro, \emph{Equivariant completion}, J. Math. Kyoto Univ.
  \textbf{14} (1974), 1--28. \MR{0337963}

\bibitem[Ume82a]{MR676586}
Hiroshi Umemura, \emph{Maximal algebraic subgroups of the {C}remona group of
  three variables. {I}mprimitive algebraic subgroups of exceptional type},
  Nagoya Math. J. \textbf{87} (1982), 59--78. \MR{676586 (84b:14005)}

\bibitem[Ume82b]{MR683251}
\bysame, \emph{On the maximal connected algebraic subgroups of the {C}remona
  group. {I}}, Nagoya Math. J. \textbf{88} (1982), 213--246. \MR{683251
  (84g:14013)}

\bibitem[Ume85]{MR803342}
\bysame, \emph{On the maximal connected algebraic subgroups of the {C}remona
  group. {II}}, Algebraic groups and related topics ({K}yoto/{N}agoya, 1983),
  Adv. Stud. Pure Math., vol.~6, North-Holland, Amsterdam, 1985, pp.~349--436.
  \MR{803342 (87d:14008)}

\bibitem[Ureon]{longversion}
Christian Urech, \emph{On homomorphisms between cremona groups}, Ph.D. thesis,
  University of Basel/ University of Rennes 1, in Preparation.

\bibitem[Wei55]{MR0074083}
Andr{{\'e}} Weil, \emph{On algebraic groups of transformations}, Amer. J. Math.
  \textbf{77} (1955), 355--391. \MR{0074083 (17,533e)}

\bibitem[Wil38]{MR1563724}
A.~R. Williams, \emph{Birational transformations in 4-space and 5-space}, Bull.
  Amer. Math. Soc. \textbf{44} (1938), no.~4, 272--278. \MR{1563724}

\bibitem[Zai95]{MR1389430}
Dmitri Zaitsev, \emph{Regularization of birational group operations in the
  sense of {W}eil}, J. Lie Theory \textbf{5} (1995), no.~2, 207--224.
  \MR{1389430 (97d:14074)}

\bibitem[Zha10]{MR2565534}
De-Qi Zhang, \emph{The {$g$}-periodic subvarieties for an automorphism {$g$} of
  positive entropy on a compact {K}{\"a}hler manifold}, Adv. Math. \textbf{223}
  (2010), no.~2, 405--415. \MR{2565534}

\end{thebibliography}
\end{document}